\newtheorem{theorem}{Theorem}[section]
\newtheorem{lemma}[theorem]{Lemma}
\newtheorem{corollary}[theorem]{Corollary}
\newtheorem{definition}[theorem]{Definition}
\newtheorem{proposition}[theorem]{Proposition}
\theoremstyle{definition}
\theoremstyle{remark}
\newtheorem{remark}[theorem]{Remark}
\newcommand{\NN}{\mathcal N}
\newcommand{\Ss}{\mathbb S}
\newcommand{\eqdef}{\triangleq}
\newcommand{\Rr}{\mathbb R}
\newcommand{\Nn}{\mathbb N}
\newcommand{\cl}{\mathop{\rm cl}\nolimits}
\newcommand{\dom}{\mathop{\rm dom}\nolimits}
\newcommand{\cone}{\mathop{\rm cone}\nolimits}
\newcommand{\conv}{\mathop{\rm conv}\nolimits}
\newcommand{\aff}{\mathop{\rm aff}\nolimits}
\newcommand{\Int}{\mathop{\rm int}\nolimits}
\newcommand{\diag}{\mathop{\rm diag}\nolimits}
\newcommand{\prox}{\mathop{\rm prox}\nolimits}
\newcommand{\sign}{\mathop{\rm sign}\nolimits}
\newcommand{\minimize}{\operatorname{minimize}}  
\renewcommand{\Re}{{\rm{I\!R}} }
\renewcommand{\Nn}{{\rm{I\!N}} }
\DeclareMathOperator*{\argmin}{\arg\!\min}
\renewcommand{\vec}{\mathop{\rm vec}\nolimits}
\newcommand{\ie}{\emph{i.e.},~}
\newcommand{\matlab}{{\sc{Matlab}}}
\newcommand{\gurobi}{{\sc{Gurobi}}}
\numberwithin{equation}{section}
\begin{document}

\title[FB truncated Newton methods for convex composite optimization]
{Forward-backward truncated Newton methods for convex composite optimization\footnotesize{\textsuperscript{\textnormal{1}}}}

\author[P. Patrinos]{Panagiotis Patrinos}
\address[P. Patrinos]{IMT Institute for Advanced Studies Lucca}
\email{panagiotis.patrinos@imtlucca.it}
\author[L. Stella]{Lorenzo Stella}
\address[L. Stella]{IMT Institute for Advanced Studies Lucca}
\email{lorenzo.stella@imtlucca.it}
\author[A. Bemporad]{Alberto Bemporad}
\address[A. Bemporad]{IMT Institute for Advanced Studies Lucca}
\email{alberto.bemporad@imtlucca.it}

\begin{abstract}
This paper proposes two proximal Newton-CG methods for convex nonsmooth optimization problems in composite form.
The algorithms are based on a a reformulation of the original nonsmooth problem as
the unconstrained minimization of a continuously differentiable function, namely the \emph{forward-backward envelope (FBE)}.
The first algorithm is based on a standard line search strategy, whereas the second one 
combines the global efficiency estimates of the corresponding first-order methods, while achieving fast asymptotic convergence rates. Furthermore, they are computationally attractive since each Newton iteration requires the approximate solution of a linear system of usually small dimension. 
\end{abstract}


\maketitle

\footnotetext[1]{A preliminary version of this paper \cite{patrinos2013proximal}
was presented at the 52nd IEEE Conference on Decision and Control, Florence,
Italy, December 11, 2013.}

\setcounter{footnote}{1}

\section{Introduction}
The focus of this work is on efficient Newton-like algorithms for convex
optimization problems in composite form, \ie
\begin{equation}\label{eq:GenProb}
\minimize\ F(x) = f(x)+g(x),
\end{equation}
where $f\in\mathcal{S}_{\mu_f,L_f}^{2,1}(\Re^n)$\footnote{$\mathcal{S}_{\mu,L}^{2,1}(\Re^n)$: class of twice continuously differentiable,
strongly convex functions with modulus of strong convexity $\mu\geq 0$, whose gradient is Lipschitz continuous with constant $L\geq 0$.} and
$g\in\mathcal{S}^0(\Re^n)$\footnote{$\mathcal{S}^0(\Re^n)$: class of proper, lower semicontinuous, convex functions from $\Re^n$ to $\overline{\Re} = \Re\cup\{+\infty\}$.}
has a cheaply computable proximal mapping~\cite{moreau1965proximiteet}.
Problems of the form~\eqref{eq:GenProb} are abundant in many scientific areas
such as control, signal processing, system identification, machine learning and
image analysis, to name a few. For example, when $g$ is the indicator of a
convex set then~\eqref{eq:GenProb} becomes a constrained optimization problem,
while for $f(x)=\|Ax-b\|_2^2$ and $g(x)=\lambda\|x\|_1$ it becomes the
$\ell_1$-regularized least-squares problem which is the main building block of
compressed sensing. When $g$ is equal to the nuclear norm, then
problem~\eqref{eq:GenProb} can model low-rank matrix recovery problems.
Finally, conic optimization problems such as LPs, SOCPs and SPDs can be brought
into the form of~\eqref{eq:GenProb}, see \cite{lan2011primal}.

Perhaps the most well known algorithm for problems in the form \eqref{eq:GenProb}
is the forward-backward splitting (FBS) or proximal gradient method
\cite{lions1979splitting, combettes2011proximal}, a generalization of the
classical gradient and gradient projection methods to problems involving a
nonsmooth term. Accelerated versions of FBS, based on the work of Nesterov
\cite{nesterov2007gradient,beck2009fast,tseng2008accelerated}, have also gained
popularity.
Although these algorithms share favorable global 
convergence rate estimates of order $O(\epsilon^{-1})$ or $O(\epsilon^{-1/2})$
(where $\epsilon$ is the solution accuracy), they are first-order methods and
therefore usually effective at computing solutions of low or medium accuracy only.
An evident remedy is to include second-order information by replacing the
Euclidean norm in the proximal mapping with the $Q$-norm,
where $Q$ is the Hessian of $f$ at $x$ or some approximation of it, mimicking
Newton or quasi-Newton methods for unconstrained problems.
This route is followed in the recent work of \cite{becker2012quasi, Lee2012ProximalNIPS}.
However, a severe limitation of the approach is that, unless $Q$ has a special
structure, the linearized subproblem is very hard to solve. For example, if
$F$ models a  QP, the corresponding subproblem is as hard as the original problem.

In this paper we follow a different approach by defining a function, which
we call \emph{forward-backward envelope (FBE)}, that has favorable properties and
can serve as a real-valued, smooth, exact penalty function
for the original problem. Our approach combines and extends ideas stemming from
the literature on merit functions for \emph{variational inequalities}
(VIs) and \emph{complementarity problems} (CPs), specifically the reformulation of a VI as a constrained continuously differentiable optimization problem
via the regularized gap function \cite{fukushima1992equivalent} and as an unconstrained continuously differentiable optimization problem via the
D-gap function \cite{yamashita1997unconstrained} (see \cite[Ch. 10]{facchinei2003finite} for a survey and \cite{Li2007exact}, \cite{patrinos2011global}
for applications to constrained optimization and model predictive control of dynamical systems).

Next, we show that one can design Newton-like methods to minimize the FBE by using tools from nonsmooth analysis. Unlike the approaches of
\cite{becker2012quasi, Lee2012ProximalNIPS}, where the corresponding subproblems are expensive to solve, our algorithms require only the solution
of a  usually small linear system  to compute the Newton direction. However, this work focuses on devising algorithms that have good {complexity guarantees}
provided by a global (non-asymptotic) convergence rate while achieving $Q$-superlinear or $Q$-quadratic\footnote{A sequence $\{x^k\}_{k\in\Nn}$ converges
to $x_\star$ with $Q$-superlinear rate if $\frac{\|x^{k+1}-x_\star\|}{\|x_k-x_\star\|}\to 0$. It converges to $x_\star$ with $Q$-quadratic rate if there
exists a $\bar{k}>0$ such that $\frac{\|x^{k+1}-x_\star\|}{\|x_k-x_\star\|^2}\leq M$, for some $M>0$ and all $k\geq\bar{k}$.}
asymptotic convergence rates in the nondegenerate cases. We show that one can achieve this goal by interleaving Newton-like iterations on the FBE
and FBS iterations. This is possible by relating directions of descent for the considered penalty function with those for the original nonsmooth function. 

The main contributions of the paper can be summarized as follows. We show
how Problem~\eqref{eq:GenProb} can be reformulated as the unconstrained minimization
of a real-valued, continuously differentiable function, the FBE, 
providing a framework that allows to extend classical algorithms for smooth
unconstrained optimization to nonsmooth or constrained problems in
composite form~\eqref{eq:GenProb}. Moreover, based on this framework, we
devise efficient proximal Newton algorithms with $Q$-superlinear or $Q$-quadratic asymptotic
convergence rate to solve~\eqref{eq:GenProb}, with global complexity bounds.
The conjugate gradient (CG) method is employed to compute efficiently 
an approximate Newton direction at every iteration. Therefore our algorithms are
able to handle large-scale problems since they require only the calculation of matrix-vector
products and there is no need to form explicitly the generalized Hessian matrix.

The outline of the paper is as follows. In Section~\ref{sec:FBE} we introduce the
FBE, a continuously differentiable penalty function for
\eqref{eq:GenProb}, and discuss some of its properties. In Section~\ref{sec:LNA} we
discuss the generalized differentiability properties of the gradient of the FBE and introduce
a linear Newton approximation (LNA) for it, which plays a role similar to that of the Hessian
in the classical Newton method. Section~\ref{sec:FBNCG} is the core of the
paper, presenting two algorithms for solving Problem~\eqref{eq:GenProb} and discussing their
local and global convergence properties. In Section~\ref{sec:Examples} we
consider some examples of $g$ and discuss the generalized Jacobian of
their proximal operator, on which the LNA
is based. Finally, in Section~\ref{sec:Simulations}, we consider some practical
problems and show how the proposed methods perform in solving them.

\section{Forward-backward envelope}\label{sec:FBE}
In the following we indicate by $X_\star$ and $F_\star$, respectively, the
set of solutions of problem~\eqref{eq:GenProb} and its optimal objective value.
Forward-backward splitting for solving~\eqref{eq:GenProb} relies on computing,
at every iteration, the following update
\begin{equation}\label{eq:FBS}
x^{k+1} = \prox_{\gamma g}(x^k-\gamma\nabla f(x^k)),
\end{equation}
where the \emph{proximal mapping}~\cite{moreau1965proximiteet} of $g$
is defined by 
\begin{equation}\label{eq:prox}
\prox_{\gamma g}(x)  \eqdef\argmin_u\left\{g(u)+\tfrac{1}{2\gamma}\|u-x\|^2\right\}.
\end{equation}
The value function  of the optimization problem~\eqref{eq:prox}  defining the proximal mapping
is called the \emph{Moreau envelope} and is denoted by $g^\gamma$, \ie
\begin{equation}\label{eq:MoreauEnv}
 g^{\gamma}(x)  \eqdef\inf_u\left\{g(u)+\tfrac{1}{2\gamma}\|u-x\|^2\right\}.
\end{equation}
Properties of the Moreau envelope and the proximal mapping are well documented in the literature \cite{bauschke2011convex,rockafellar2011variational,combettes2005signal,combettes2011proximal}. 
For example, the proximal mapping is single-valued, continuous and nonexpansive (Lipschitz continuous with Lipschitz $1$) 
and the envelope function $g^{\gamma}$ is convex, continuously differentiable, with $\gamma^{-1}$-Lipschitz continuous gradient
\begin{equation}\label{eq:nabla_e}
\nabla g^{\gamma}(x)=\gamma^{-1}(x-\prox_{\gamma g}(x)).
\end{equation}
We will next proceed to the reformulation of~\eqref{eq:GenProb} as the minimization of an unconstrained continuously differentiable function.
It is well known \cite{bauschke2011convex} that an optimality condition
for~\eqref{eq:GenProb} is
\begin{equation}\label{eq:OptCond}
x=\prox_{\gamma g}(x-\gamma\nabla f(x)).
\end{equation}
Since $f\in\mathcal{S}_{\mu_f,L_f}^{2,1}(\Re^n)$, we have that $\|\nabla^2 f(x)\|\leq L_f$ \cite[Lem. 1.2.2]{nesterov2003introductory}, therefore $I-\gamma\nabla^2 f(x)$ is symmetric
and positive definite whenever $\gamma\in(0,1/L_f)$.
Premultiplying both sides of~\eqref{eq:OptCond} by $\gamma^{-1}(I-\gamma\nabla^2 f(x))$, $\gamma\in(0,1/L_f)$,
one obtains the equivalent condition
\begin{equation}\label{eq:OptCondScaled}
\gamma^{-1}(I-\gamma\nabla^2f(x))(x-\prox_{\gamma g}(x-\gamma\nabla f(x))) = 0.
\end{equation}
The left-hand side of equation~\eqref{eq:OptCondScaled} is the gradient of the function that we call \emph{forward-backward envelope},
indicated by $F_\gamma$. Using~\eqref{eq:nabla_e} to integrate~\eqref{eq:OptCondScaled}, one obtains
the following definition.
\begin{definition}
Let $F(x) = f(x)+g(x)$, where $f\in\mathcal{S}_{\mu_f,L_f}^{2,1}(\Re^n)$,
$g\in\mathcal{S}^0(\Re^n)$. The forward-backward envelope of $F$ is given by
\begin{equation}\label{eq:Penalty}
F_\gamma(x)\eqdef f(x)-\tfrac{\gamma}{2}||\nabla f(x)||_2^2+g^{\gamma}(x-\gamma\nabla f(x)).
\end{equation}
\end{definition}
Alternatively, one can express $F_\gamma$ as the value function of the minimization
problem that yields forward-backward splitting. In fact
\begin{subequations}
\begin{align}
F_\gamma(x)&=\min_{u\in\Re^n}\left\{f(x)+\nabla f(x)'(u-x)+g(u)+\tfrac{1}{2\gamma}\|u-x\|^2\right\}\label{eq:Fmin}\\
&=f(x)+g(P_{\gamma}(x))-\gamma\nabla f(x)'G_{\gamma}(x)+\tfrac{\gamma}{2}\|G_{\gamma}(x)\|^2,
\end{align}
\end{subequations}
where
\begin{align*}
P_{\gamma}(x)&\eqdef\prox_{\gamma g}(x-\gamma\nabla f(x)),\\
G_{\gamma}(x)&\eqdef\gamma^{-1}(x-P_{\gamma}(x)).
\end{align*}
One distinctive feature of $F_\gamma$ is the fact that it is real-valued despite the fact that $F$ can be extended-real-valued.
In addition,  $F_\gamma$ enjoys favorable properties, summarized in the next theorem. 
\begin{theorem}\label{Th:PropFg}
The following properties of $F_\gamma$ hold:
\begin{enumerate}[\rm (i)]
\item\label{prop:DerPen} $F_\gamma$ is continuously differentiable with
\begin{equation}\label{eq:DerPen}
\nabla F_\gamma(x)=\left(I-\gamma\nabla^2 f(x)\right)G_{\gamma}(x).
\end{equation}
If $\gamma\in (0,1/L_f)$ then the set of stationary points of $F_\gamma$ equals $X_\star$.
\item\label{prop:UppBnd} For any $x\in\Re^n$, $\gamma>0$
\begin{equation}\label{eq:UppBnd}
F_\gamma(x)\leq F(x)-\tfrac{\gamma}{2}\|G_{\gamma}(x)\|^2.
\end{equation}
\item\label{prop:LowBnd} For any $x\in\Re^n$, $\gamma>0$
\begin{equation}\label{eq:LowBnd}
F(P_{\gamma}(x))\leq F_\gamma(x)-\tfrac{\gamma}{2} \left(1-{\gamma}L_f\right)\|G_{\gamma}(x)\|^2.
 \end{equation}
In particular, if $\gamma\in\left(0,1/L_f\right]$ then
\begin{equation}\label{eq:LowBnd4Gamma}
F(P_{\gamma}(x))\leq F_\gamma(x).
\end{equation}
\item\label{cor:Equivalence} If $\gamma\in (0,1/L_f)$ then $X_\star=\argmin F_\gamma$.
\end{enumerate}
\end{theorem}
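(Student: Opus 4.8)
The plan is to establish the four properties in order, since part~(iv) follows quickly from the first three. For part~(i) I would differentiate the closed form~\eqref{eq:Penalty} directly. Writing $z(x)\eqdef x-\gamma\nabla f(x)$, the map $z$ has Jacobian $I-\gamma\nabla^2 f(x)$, the term $f-\tfrac{\gamma}{2}\|\nabla f\|^2$ is $C^1$ because $f\in\mathcal{S}^{2,1}_{\mu_f,L_f}(\Re^n)$, and $g^\gamma$ is $C^1$ with gradient~\eqref{eq:nabla_e}. The chain rule then gives
\[
\nabla F_\gamma(x)=\nabla f(x)-\gamma\nabla^2 f(x)\nabla f(x)+\left(I-\gamma\nabla^2 f(x)\right)\nabla g^\gamma(z(x)).
\]
Since $\nabla g^\gamma(z(x))=\gamma^{-1}(z(x)-P_\gamma(x))=G_\gamma(x)-\nabla f(x)$, substituting and cancelling the terms in $\nabla f(x)$ and $\nabla^2 f(x)\nabla f(x)$ collapses the expression to $(I-\gamma\nabla^2 f(x))G_\gamma(x)$, which is~\eqref{eq:DerPen}; continuity of $\nabla F_\gamma$ is inherited from continuity of $\nabla^2 f$ and of $G_\gamma$. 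For the stationary points I would note that $\|\nabla^2 f(x)\|\le L_f$ makes $I-\gamma\nabla^2 f(x)$ nonsingular (indeed positive definite) for $\gamma\in(0,1/L_f)$, so $\nabla F_\gamma(x)=0$ is equivalent to $G_\gamma(x)=0$, i.e.\ to the fixed-point condition~\eqref{eq:OptCond}, whose solution set is exactly $X_\star$.

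Parts~(ii) and~(iii) are the two sides of a sandwich around $F_\gamma$. For~(ii) I would use the value-function form~\eqref{eq:Fmin}: the model $m_x(u)\eqdef f(x)+\nabla f(x)'(u-x)+g(u)+\tfrac{1}{2\gamma}\|u-x\|^2$ is strongly convex with modulus $\gamma^{-1}$ and is minimized at $u=P_\gamma(x)$, so the strong-convexity inequality between its minimizer and the point $u=x$ reads $m_x(x)\ge m_x(P_\gamma(x))+\tfrac{1}{2\gamma}\|x-P_\gamma(x)\|^2$; since $m_x(x)=F(x)$, $m_x(P_\gamma(x))=F_\gamma(x)$ and $\|x-P_\gamma(x)\|^2=\gamma^2\|G_\gamma(x)\|^2$, this is precisely~\eqref{eq:UppBnd}. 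For~(iii) I would instead invoke the descent lemma for $f$ (its $L_f$-Lipschitz gradient yields the quadratic upper bound) at the point $y=P_\gamma(x)$, add $g(P_\gamma(x))$ to both sides to form $F(P_\gamma(x))$, and then recognize from~\eqref{eq:Fmin} that $f(x)+\nabla f(x)'(P_\gamma(x)-x)+g(P_\gamma(x))=F_\gamma(x)-\tfrac{1}{2\gamma}\|x-P_\gamma(x)\|^2$; collecting the quadratic terms produces~\eqref{eq:LowBnd}, and~\eqref{eq:LowBnd4Gamma} is the special case $\gamma\le 1/L_f$.

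Finally, for part~(iv) I would combine the above. Since $F_\gamma$ is $C^1$ by part~(i), every minimizer is a stationary point, and part~(i) identifies the stationary set with $X_\star$, giving $\argmin F_\gamma\subseteq X_\star$. For the reverse inclusion, if $x\in X_\star$ then $G_\gamma(x)=0$, so~\eqref{eq:UppBnd} gives $F_\gamma(x)\le F(x)=F_\star$ while~\eqref{eq:LowBnd} gives $F_\star=F(P_\gamma(x))\le F_\gamma(x)$, hence $F_\gamma(x)=F_\star$; and for arbitrary $y\in\Re^n$, using $\gamma\in(0,1/L_f)$ in~\eqref{eq:LowBnd} gives $F_\gamma(y)\ge F(P_\gamma(y))\ge F_\star$. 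Thus $\inf F_\gamma=F_\star$ is attained exactly on $X_\star$, so $X_\star=\argmin F_\gamma$.

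The main obstacle is the bookkeeping in part~(i): correctly threading the chain rule through $z(x)$ and applying the Moreau-envelope gradient identity so that the $\nabla f(x)$ and $\nabla^2 f(x)\nabla f(x)$ terms cancel cleanly. The conceptual crux in part~(ii) is recognizing that the forward-backward model $m_x$ is $\gamma^{-1}$-strongly convex, which is exactly what converts the minimization gap into the sharp quadratic term $\tfrac{\gamma}{2}\|G_\gamma(x)\|^2$; once parts~(i)--(iii) are in hand, part~(iv) is essentially immediate.
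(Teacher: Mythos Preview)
Your proof is correct and largely follows the paper's route. Parts~(i), (iii) and~(iv) match the paper almost exactly: the paper obtains~\eqref{eq:DerPen} by the same chain-rule computation (phrased in the text as ``integrating''~\eqref{eq:OptCondScaled}, but the content is identical), proves~(iii) via the descent lemma applied to $f$ at $P_\gamma(x)$, and proves~(iv) by combining the sandwich bounds with the stationary-point characterization from~(i).

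The only genuine difference is in part~(ii). The paper argues directly from the optimality condition of the proximal problem: since $G_\gamma(x)-\nabla f(x)\in\partial g(P_\gamma(x))$, the subgradient inequality for $g$ at $P_\gamma(x)$ evaluated at the point $x$ gives, after adding $f(x)$, exactly~\eqref{eq:UppBnd}. You instead package the same ingredients into the single observation that $m_x$ is $\gamma^{-1}$-strongly convex with minimizer $P_\gamma(x)$, so $m_x(x)\ge m_x(P_\gamma(x))+\tfrac{1}{2\gamma}\|x-P_\gamma(x)\|^2$. The two arguments are equivalent (your strong-convexity gap is precisely the paper's subgradient inequality for $g$ plus the quadratic term), but your phrasing is slightly more conceptual and avoids expanding the inner product by hand.
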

\begin{proof}
Part (i) has already been proven. 
Regarding (ii), from the optimality condition for the problem defining the proximal mapping we have
\[
G_{\gamma}(x)-\nabla f(x)\in\partial g(P_{\gamma}(x)),
\]
\ie $G_{\gamma}(x)-\nabla f(x)$ is a subgradient of $g$ at $P_{\gamma}(x)$. From the subgradient inequality
\begin{align*}
g(x)&\geq g(P_{\gamma}(x))+(G_{\gamma}(x)-\nabla f(x))'(x-P_{\gamma}(x))\\
&=g(P_{\gamma}(x))-\gamma\nabla f(x)'G_{\gamma}(x)+\gamma\|G_{\gamma}(x)\|^2
\end{align*}
Adding $f(x)$ to both sides proves the claim.
For part (iii), we have
\begin{align*}
F_\gamma (x)&=f(x)+\nabla f(x)'(P_{\gamma}(x)-x)+g(P_{\gamma}(x)){+}\tfrac{\gamma}{2}\|G_{\gamma}(x)\|^2\\
&\geq f(P_{\gamma}(x))+g(P_{\gamma}(x))-\tfrac{L_f}{2}\|P_{\gamma}(x)-x\|^2+\tfrac{\gamma}{2}\|G_{\gamma}(x)\|^2.
\end{align*}
where the inequality follows by Lipschitz continuity of $\nabla f$ and the descent lemma, see e.g.~\cite[Prop. A.24]{bertsekas1999nonlinear}. For part (iv), putting $x_\star\in X_\star$ in~\eqref{eq:UppBnd} and~\eqref{eq:LowBnd} and using $x_\star=P_\gamma(x_\star)$ we obtain $F(x_\star)=F_\gamma(x_\star)$. Now, for any $x\in\Re^n$ we have $F_\gamma(x_\star)=F(x_\star)\leq F(P_\gamma(x))\leq F_\gamma(x)$, where the first inequality follows by optimality of $x_\star$ for $F$, while the second inequality follows  by~\eqref{eq:LowBnd}. This shows that every $x_\star\in X_\star$ is also a (global) minimizer of $F_\gamma$. The proof finishes by recalling that the set of minimizers of $F_\gamma$ are a subset of the set of its stationary points, which by (i) is equal to $X_\star$.
\iftoggle{svver}{\qed}{}
\end{proof}
Parts (i) and (iv) of Theorem~\eqref{Th:PropFg} show that if $\gamma\in (0,1/L_f)$, the nonsmooth problem~\eqref{eq:GenProb} is completely equivalent to the unconstrained minimization of the continuously differentiable function $F_\gamma$, in the sense that the sets of minimizers and optimal values are equal. In other words we have
$$\argmin F=\argmin F_\gamma,\qquad \inf F = \inf F_\gamma.$$
Part (ii) shows that an $\epsilon$-optimal solution $x$ of $F$ is automatically $\epsilon$-optimal for $F_\gamma$, while part (iii) implies that from an $\epsilon$-optimal for $F_\gamma$ we can directly obtain an $\epsilon$-optimal solution for $F$ if $\gamma$ is chosen sufficiently small, \ie
\begin{align*}
F(x)-F_\star&\leq\epsilon\implies F_\gamma(x)-F_\star\leq\epsilon,\\
F_\gamma(x)-F_\star&\leq\epsilon\implies F(P_\gamma(x))-F_\star\leq\epsilon.
\end{align*}

Notice that part (iv) of Theorem~\ref{Th:PropFg} states that if $\gamma\in (0,1/L_f)$, then not only do the stationary points of $F_\gamma$ agree with $X_\star$ (cf. Theorem~\ref{Th:PropFg}(\ref{prop:DerPen})), but also that its set of minimizers agrees with $X_\star$, \ie although $F_\gamma$ may not be convex, the set of stationary points turns out to be equal to the set of its minimizers. However, in the particular but important case where $f$ is convex quadratic, the FBE is convex with Lipschitz continuous gradient, as the following theorem shows.
\begin{theorem}\label{th:ProxPropQuad}
If $f(x)=\tfrac{1}{2}x'Qx+q'x$ and $\gamma\in(0,1/L_f)$, then $F_\gamma\in\mathcal{S}^{1,1}_{\mu_{F_\gamma},L_{F_\gamma}}(\Re^n)$, where
\begin{subequations}
\begin{align}
L_{F_\gamma}&=2(1-\gamma\mu_f)/\gamma,\label{eq:LipF}\\
\mu_{F_\gamma}&=\min\{(1-\gamma\mu_f)\mu_f,(1-\gamma L_f)L_f\}\label{eq:muF}
\end{align}
\end{subequations}
and $\mu_f=\lambda_{\min}(Q)\geq 0$, $L_f=\lambda_{\max}(Q)$.  
\end{theorem}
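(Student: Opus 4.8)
The plan is to verify the two defining properties of the class $\mathcal{S}^{1,1}_{\mu_{F_\gamma},L_{F_\gamma}}(\Re^n)$ directly at the level of the gradient map: continuous differentiability of $F_\gamma$ is already granted by Theorem~\ref{Th:PropFg}(\ref{prop:DerPen}), so it remains to show that $\nabla F_\gamma$ is $\mu_{F_\gamma}$-strongly monotone (equivalently, $F_\gamma$ is $\mu_{F_\gamma}$-strongly convex) and $L_{F_\gamma}$-Lipschitz. Since $f$ is quadratic, $\nabla^2 f(x)\equiv Q$ is constant, and Theorem~\ref{Th:PropFg}(\ref{prop:DerPen}) gives $\nabla F_\gamma(x)=M\,G_{\gamma}(x)$ with $M\eqdef I-\gamma Q$. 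I would first record the decomposition $G_{\gamma}(x)=\nabla f(x)+\nabla g^\gamma(x-\gamma\nabla f(x))$, which follows from the definition of $G_{\gamma}$ together with the formula $\nabla g^\gamma=\gamma^{-1}(I-\prox_{\gamma g})$ in~\eqref{eq:nabla_e}. Writing $d\eqdef x-y$ and, for the affine map $z(x)\eqdef x-\gamma\nabla f(x)$ (so that $z(x)-z(y)=Md$), setting $r\eqdef\nabla g^\gamma(z(x))-\nabla g^\gamma(z(y))$, this yields the working identity
\begin{equation*}
\nabla F_\gamma(x)-\nabla F_\gamma(y)=M(Qd+r).
\end{equation*}
Here $M$ is symmetric, commutes with $Q$, and has spectrum in $[1-\gamma L_f,\,1-\gamma\mu_f]\subset(0,1]$ because $\gamma<1/L_f$; in particular $\|M\|=1-\gamma\mu_f$.

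For strong convexity I would take the inner product of the working identity with $d$. Using that $M$ and $Q$ are symmetric and commute, the term $M\cdot Qd$ contributes $d'Q(I-\gamma Q)d$, while the term $M\cdot r$ contributes $r'(Md)=r'(z(x)-z(y))\ge 0$ by monotonicity of $\nabla g^\gamma$ (convexity of $g^\gamma$). Thus $\langle\nabla F_\gamma(x)-\nabla F_\gamma(y),d\rangle\ge d'Q(I-\gamma Q)d\ge\lambda_{\min}\!\big(Q(I-\gamma Q)\big)\|d\|^2$. The eigenvalues of $Q(I-\gamma Q)$ are $\phi(\lambda)=\lambda(1-\gamma\lambda)$ for $\lambda\in\mathrm{spec}(Q)\subseteq[\mu_f,L_f]$; since $\phi$ is concave, $\phi(\lambda)\ge\min\{\phi(\mu_f),\phi(L_f)\}=\mu_{F_\gamma}$, which delivers the desired lower bound.

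For the Lipschitz estimate I would bound the working identity in norm: $\|\nabla F_\gamma(x)-\nabla F_\gamma(y)\|\le\|M\|\,\|Qd+r\|\le(1-\gamma\mu_f)\big(\|Qd\|+\|r\|\big)$. Here $\|Qd\|\le L_f\|d\|$ and, by the $\gamma^{-1}$-Lipschitz continuity of $\nabla g^\gamma$, $\|r\|\le\gamma^{-1}\|Md\|\le\gamma^{-1}(1-\gamma\mu_f)\|d\|$. Combining gives $\|\nabla F_\gamma(x)-\nabla F_\gamma(y)\|\le(1-\gamma\mu_f)(L_f-\mu_f+\gamma^{-1})\|d\|$, and finally the inequality $L_f-\mu_f<\gamma^{-1}$ (valid since $\gamma<1/L_f\le 1/(L_f-\mu_f)$) upgrades the bracket to $2\gamma^{-1}$, yielding $L_{F_\gamma}=2(1-\gamma\mu_f)/\gamma$.

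The main obstacle is the strong-convexity step. In general $F_\gamma$ need not be convex, so the content of the theorem is precisely that the quadratic structure forces strong monotonicity; the delicate point is controlling the contribution of the Moreau-envelope part, which is handled cleanly by the monotonicity of $\nabla g^\gamma$, after which everything reduces to the one-dimensional, concave eigenvalue estimate $\min_{\lambda\in[\mu_f,L_f]}\lambda(1-\gamma\lambda)$. The Lipschitz half is then routine, relying only on $\|I-\gamma Q\|=1-\gamma\mu_f$ and the standing restriction $\gamma<1/L_f$.
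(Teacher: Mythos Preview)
Your proof is correct and, for the strong-convexity half, essentially identical to the paper's: the paper writes $F_\gamma=\psi_1+\psi_2$ with $\psi_1(x)=f(x)-\tfrac{\gamma}{2}\|\nabla f(x)\|^2$ quadratic with Hessian $Q(I-\gamma Q)$ and $\psi_2=g^\gamma\circ(I-\gamma Q)$ convex, then invokes the same concave eigenvalue estimate $\min_{\lambda\in[\mu_f,L_f]}\lambda(1-\gamma\lambda)$ (stated separately as Lemma~\ref{lem:eigen}). Your gradient-monotonicity computation is the same argument one level down.

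The Lipschitz half differs slightly in route. The paper bounds $\|\nabla F_\gamma(x)-\nabla F_\gamma(y)\|\le\|I-\gamma Q\|\,\|G_\gamma(x)-G_\gamma(y)\|$ and then appeals to a separate lemma (Lemma~\ref{le:zNonExp}) that $G_\gamma$ is $(2/\gamma)$-Lipschitz, proved via nonexpansiveness of $P_\gamma$ using averaged-operator arguments. You instead split $G_\gamma=\nabla f+\nabla g^\gamma\!\circ z$ and bound each piece directly from the $\gamma^{-1}$-Lipschitz continuity of $\nabla g^\gamma$, which is more elementary (no averaged-operator machinery) and in fact yields the slightly sharper intermediate constant $(1-\gamma\mu_f)(L_f-\mu_f+\gamma^{-1})$ before you relax it to $L_{F_\gamma}$. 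Either way the stated $L_{F_\gamma}$ is obtained.
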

\begin{proof}
Let
\begin{align*}
\psi_1(x) & \eqdef f(x)-(\gamma/2)\|\nabla f(x)\|^2=(1/2)x'Q(I-\gamma Q)x-\gamma q'Qx-\gamma q'q,\\
\psi_2(x) & \eqdef g^\gamma(x-\gamma\nabla f(x)).
\end{align*}
Due to Lemma~\ref{lem:eigen} (in the Appendix), $\psi_1$ is strongly convex with modulus $\mu_{F_\gamma}$. 
Function $\psi_2(x)$ is convex, as the composition of the convex function $g^\gamma$
with the linear mapping $x-\gamma\nabla f(x)$.
Therefore, $F_\gamma(x)=\psi_1(x)+\psi_2(x)$ is strongly convex with convexity
parameter $\mu_{F_\gamma}$.
On the other hand, for every $x_1,x_2\in\Re^n$
\begin{align*}
\|\nabla F_\gamma(x_1)-\nabla F_\gamma(x_2)\| &\leq\|I-\gamma Q\|\|G_{\gamma}(x_1)-G_{\gamma}(x_2)\|\\
&\leq 2(1-\gamma\mu_f)/\gamma\|x_1-x_2\|
\end{align*}\
where the second inequality is due to Lemma~\ref{le:zNonExp} in the Appendix.
\iftoggle{svver}{\qed}{}
\end{proof}
Notice that if $\mu_f>0$ and we choose $\gamma=1/(L_f+\mu_f)$, then $L_{F_\gamma}=2L_f$ and $\mu_{F_\gamma}=L_f\mu_f/(L_f+\mu_f)$, so $L_{F_\gamma}/\mu_{F_\gamma}=2(L_f/\mu_f+1)$. In other words the condition number of $F_\gamma$ is roughly double compared to that of $f$.
\subsection{Interpretations}
It is apparent from~\eqref{eq:FBS} and~\eqref{eq:OptCond} that FBS is a Picard
iteration for computing a fixed point of the nonexpansive mapping $P_\gamma$.
It is well known that fixed-point iterations may exhibit slow asymptotic
convergence. On the other hand, Newton methods achieve much faster asymptotic
convergence rates. However, in order to devise globally convergent Newton-like
methods one needs a merit function on which to perform a line search, in order
to determine a step size that guarantees sufficient decrease and damps the
Newton steps when far from the solution. This is exactly the role that FBE plays
in this paper. 

Another interesting observation is that the FBE provides a link between gradient methods and FBS, just like 
the Moreau envelope~\eqref{eq:MoreauEnv} does for the proximal point algorithm~\cite{rockafellar1976monotone}.
To see this, consider the problem
\begin{equation}\label{eq:NSprob}
\minimize\ g(x)
\end{equation}
where $g\in\mathcal{S}^0(\Re^n)$. The proximal point algorithm for
solving~\eqref{eq:NSprob} is 
\begin{equation}\label{eq:ProxMin}
x^{k+1}=\prox_{\gamma g}(x^k).
\end{equation}
It is well known that the proximal point algorithm can be interpreted as
a gradient method for minimizing the Moreau envelope of $g$,
cf.~\eqref{eq:MoreauEnv}. Indeed, due to~\eqref{eq:nabla_e},
iteration~\eqref{eq:ProxMin} can be expressed as
$$x^{k+1}=x^k-\gamma\nabla g^\gamma(x^k).$$
This simple idea provides a link between nonsmooth and smooth optimization and
has led to the discovery of a variety of algorithms for problem~\eqref{eq:NSprob},
such as semismooth Newton methods~\cite{fukushima1996globally},
variable-metric~\cite{bonnans1995family} and quasi-Newton methods~\cite{mifflin1998quasi},
and trust-region methods~\cite{sagara2005trust}, to name a few.  
However, when dealing with composite problems, even if $\prox_{\gamma g}$ and $g^\gamma$
are cheaply computable, computing proximal mapping and Moreau envelope of
$(f+g)$ is usually as hard as solving~\eqref{eq:GenProb} itself.
On the other hand, forward-backward splitting takes advantage of the
structure of the problem 
by operating separately on the two summands, cf.~\eqref{eq:FBS}.
The question that naturally arises is the following: 
\begin{quote}
\emph{Is there a continuously differentiable function that provides an
interpretation of FBS as a gradient method, just like the Moreau envelope does
for the proximal point algorithm and problem~\eqref{eq:NSprob}?}
\end{quote}
The forward-backward envelope provides an affirmative answer. Specifically, FBS
can be interpreted as the following  (variable metric) gradient method on the FBE:
$$x^{k+1}=x^k-\gamma(I-\gamma\nabla^2 f(x^k))^{-1}\nabla F_\gamma(x^k).$$
Furthermore, the following properties holding for $g^\gamma$
\begin{equation*}
g^{\gamma} \leq g,\quad\inf g^{\gamma} = \inf g,\quad\argmin g^{\gamma} = \argmin g.
\end{equation*}
correspond to Theorem~\ref{Th:PropFg}(\ref{prop:LowBnd}) and Theorem~\ref{Th:PropFg}(\ref{cor:Equivalence})
for the FBE.
The relationship between Moreau envelope and forward-backward envelope is then apparent.
This opens the possibility of extending FBS and devising new
algorithms for problem~\eqref{eq:GenProb} by simply reconsidering 
and appropriately adjusting methods for unconstrained minimization of
continuously differentiable functions, the most well studied problem in
optimization. In this work we exploit one of the numerous alternatives, by
devising Newton-like algorithms that are able to achieve fast asymptotic
convergence rates.
The next section deals with the other obstacle that needs to be overcome,
\ie constructing a second-order expansion for the $\mathcal{C}^1$
(but not $\mathcal{C}^2$) function $F_\gamma$ around any optimal solution,
that behaves similarly to the Hessian for $\mathcal{C}^2$ functions and allows
us to devise algorithms with fast local convergence.

\section{Second-order Analysis of $F_\gamma$}\label{sec:LNA}
As it was shown in Section~\ref{sec:FBE}, $F_\gamma$ is continuously differentiable over $\Re^n$. However $F_\gamma$ fails to be $\mathcal{C}^2$ in most cases:
since $g$ is nonsmooth, its Moreau envelope $g^\gamma$ is hardly ever $\mathcal{C}^2$. For example, if $g$ is real-valued then $g^\gamma$
is  $\mathcal{C}^2$ and $\prox_{\gamma g}$ is $\mathcal{C}^1$ if and only if $g$ is $\mathcal{C}^2$~\cite{lemarechal1997practical}.
Therefore, we hardly ever have the luxury of assuming continuous differentiability of $\nabla F_\gamma$ and we must resort into generalized notions of
differentiability stemming from nonsmooth analysis. Specifically, our analysis is largely based upon  generalized differentiability properties
of $\prox_{\gamma g}$ which we study next.

\subsection{Generalized Jacobians of proximal mappings}
Since $\prox_{\gamma g}$ is globally Lipschitz continuous, by Rademacher's theorem
\cite[Th.~9.60]{rockafellar2011variational} it is almost everywhere differentiable.  
Recall that Rademacher's theorem asserts that if a mapping $G:\Re^n\to\Re^m$ is locally Lipschitz continuous on $\Re^n$, then it is almost
everywhere differentiable, \ie the set $\Re^n\setminus C_G$ has measure zero, where $C_G$ is the subset of points in $\Re^n$ for which $G$
is differentiable. Hence, although the Jacobian of $\prox_{\gamma g}$ in the classical
sense might not exist everywhere, generalized differentiability notions, such as the $B$-subdifferential and the generalized Jacobian of Clarke,
can be employed to provide a local first-order approximation of $\prox_{\gamma g}$.
\begin{definition}\label{def:Jacs}
Let $G:\Re^n\to\Re^m$ be locally Lipschitz continuous at  $x\in\Re^n$. The B-subdifferential (or limiting Jacobian) of $G$ at $x$ is
\begin{equation*}
\partial_B G(x)\eqdef\left\{H\in\Re^{m\times n}\left|\right.\exists\ \{x^k\}\subset C_G\textrm{ with }x^k\to x, \nabla G(x^k)\to H\right\},
\end{equation*} 
whereas the (Clarke) generalized Jacobian of $G$ at $x$  is
$$\partial_C G(x)\eqdef\conv(\partial_BG(x)).$$
\end{definition}
If $G:\Re^n\to\Re^m$ is locally Lipschitz on $\Re^n$ then  $\partial_CG(x)$ is a nonempty, convex and compact subset of $m$ by $n$
matrices, and as a set-valued mapping it is outer-semicontinuous at every $x\in\Re^n$.
The next theorem shows that the elements of the generalized Jacobian of the proximal mapping are symmetric and positive semidefinite.
Furthermore, it provides a bound on the magnitude of their eigenvalues.
\begin{theorem}\label{th:JacProx}
Suppose that $g\in\mathcal{S}^0(\Re^n)$ and $x\in\Re^n$. Every $P\in\partial_C(\prox_{\gamma g})(x)$ is a symmetric positive semidefinite matrix that satisfies $\|P\|\leq 1$.
\end{theorem}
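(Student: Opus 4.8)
The plan is to reduce everything to points where $\prox_{\gamma g}$ is classically differentiable, and then exploit the fact that $\prox_{\gamma g}$ is the gradient of a convex function. First I would observe that the three claimed properties --- symmetry, positive semidefiniteness, and operator norm at most $1$ --- jointly describe a set of matrices that is \emph{convex} and \emph{closed under limits}. Since $\partial_C(\prox_{\gamma g})(x)=\conv(\partial_B(\prox_{\gamma g})(x))$ and every element of $\partial_B(\prox_{\gamma g})(x)$ is a limit of Jacobians $\nabla\prox_{\gamma g}(x^k)$ with $x^k\in C_{\prox_{\gamma g}}$, it therefore suffices to establish the three properties for the Jacobian $P=\nabla\prox_{\gamma g}(x)$ at an arbitrary point $x$ of differentiability; convexity then handles the $\conv$ and closedness handles the limit.

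The key structural observation is that, by~\eqref{eq:nabla_e}, one has $\prox_{\gamma g}=\id-\gamma\nabla g^\gamma=\nabla\phi$, where $\phi\eqdef\tfrac12\|\cdot\|^2-\gamma g^\gamma$. Because $g^\gamma$ is convex with $\gamma^{-1}$-Lipschitz gradient, the function $\tfrac{1}{2\gamma}\|\cdot\|^2-g^\gamma$ is convex, and hence $\phi$ is convex; thus $\prox_{\gamma g}$ is the gradient of a convex, continuously differentiable function. At a point of differentiability $g^\gamma$ is then twice differentiable, $P=I-\gamma\nabla^2 g^\gamma(x)$, and the eigenvalue bounds are immediate: convexity of $g^\gamma$ gives $\nabla^2 g^\gamma(x)\succeq 0$, while $\gamma^{-1}$-Lipschitzness of $\nabla g^\gamma$ gives $\nabla^2 g^\gamma(x)\preceq\gamma^{-1}I$. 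Consequently $0\preceq P\preceq I$, which yields both positive semidefiniteness and $\|P\|\le 1$; the norm bound also follows directly from nonexpansiveness of $\prox_{\gamma g}$, since the Jacobian of a $1$-Lipschitz map has operator norm at most $1$.

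The main obstacle is the symmetry of $P$. The eigenvalue argument above already presumes that $\nabla^2 g^\gamma(x)$ is a genuine symmetric Hessian, but at a mere point of first-order differentiability of $\nabla g^\gamma$ the symmetry of its Jacobian is not automatic --- only the symmetric part of $P$ is pinned down by the second-order Taylor expansion of $g^\gamma$. What saves us is precisely that $\prox_{\gamma g}=\nabla\phi$ is a gradient field: the line integral of $\nabla\phi$ around any closed loop vanishes, because $\phi$ is continuously differentiable (this needs no $\mathcal{C}^2$ regularity). Writing $\nabla\phi(x+h)=\nabla\phi(x)+Ph+o(\|h\|)$ and integrating over the boundary of an infinitesimal coordinate square of side $\epsilon$ in the $(i,j)$-plane, the constant term contributes nothing, the remainder contributes $o(\epsilon^2)$, and the leading linear term contributes a nonzero multiple of $(P_{ij}-P_{ji})\,\epsilon^2$; dividing by $\epsilon^2$ and letting $\epsilon\to0$ forces $P_{ij}=P_{ji}$, i.e.\ $P$ is symmetric. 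With symmetry in hand the bounds $0\preceq P\preceq I$ complete the argument at differentiability points, and the reduction in the first step extends the conclusion to all of $\partial_C(\prox_{\gamma g})(x)$.
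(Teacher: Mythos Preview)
Your proof is correct. Both you and the paper exploit the relation $\prox_{\gamma g}=\id-\gamma\nabla g^\gamma$ and derive positive semidefiniteness from convexity of $g^\gamma$ and the upper eigenvalue bound from $\gamma^{-1}$-Lipschitzness of $\nabla g^\gamma$ (equivalently, nonexpansiveness of $\prox_{\gamma g}$). The routes diverge only on symmetry: the paper simply invokes the known fact that every element of the Clarke generalized Jacobian of the gradient of a convex $\mathcal{C}^{1}$ function is symmetric positive semidefinite (citing \cite[Sec.~8.3.3]{facchinei2003finite}), and then transfers this to $\prox_{\gamma g}$ via the calculus rule $\partial_C(\prox_{\gamma g})(x)=I-\gamma\,\partial_C(\nabla g^\gamma)(x)$, which holds with equality because one summand is smooth. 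You instead reduce to points of classical differentiability (using that the set of symmetric positive semidefinite contractions is closed and convex), recognize $\prox_{\gamma g}$ as the gradient of the $\mathcal{C}^{1}$ potential $\tfrac12\|\cdot\|^2-\gamma g^\gamma$, and give a self-contained line-integral argument for the symmetry of its Jacobian there. Your route is more elementary and avoids the external reference at the cost of some extra work; the paper's is shorter but black-boxes exactly the step you took care to justify.
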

\begin{proof}
Since $g$ is convex, its Moreau envelope is a convex function as well, therefore every element of $\partial_C(\nabla g^\gamma)(x)$ is a
symmetric positive semidefinite matrix (see e.g. \cite[Sec.~8.3.3]{facchinei2003finite}). Due to~\eqref{eq:nabla_e}, we have that
$\prox_{\gamma g}(x)=x-\gamma\nabla g^\gamma(x)$,
therefore 
\begin{equation}\label{eq:JacProx}
\partial_C(\prox_{\gamma g})(x)= I-\gamma \partial_C(\nabla g^\gamma)(x).
\end{equation}
The last relation holds with equality (as opposed to inclusion in the general case) due to the fact that one of the summands is continuously differentiable.
Now from~\eqref{eq:JacProx} we easily infer that every element of $\partial_C(\prox_{\gamma g})(x)$ is a symmetric matrix.
Since $\nabla g^\gamma(x)$ is Lipschitz continuous with Lipschitz constant $\gamma^{-1}$, using \cite[Prop.~2.6.2(d)]{clarke1990optimization},
we infer that every $H\in \partial_C(\nabla g^\gamma)(x)$ satisfies $\|H\|\leq\gamma^{-1}$. Now, according to \eqref{eq:JacProx}, it holds
$$P\in\partial_C(\prox_{\gamma g})(x)\iff P=I-\gamma H,\quad H\in\partial_C(\nabla g^\gamma)(x).$$
Therefore,
$$ d'Pd=\|d\|^2-\gamma d'Hd\geq\|d\|^2-\gamma\gamma^{-1}\|d\|^2 = 0,\quad\forall P\in\partial_C(\prox_{\gamma g})(x).$$
On the other hand, since $\prox_{\gamma g}$ is Lipschitz continuous with Lipschitz constant 1, using \cite[Prop.~2.6.2(d)]{clarke1990optimization}
we obtain that $\|P\|\leq 1$, for all $P\in\partial_C(\prox_{\gamma g})(x)$.
\iftoggle{svver}{\qed}{}
\end{proof}

An interesting property of  $\partial_C\prox_{\gamma g}$, documented in the following proposition,
is useful whenever $g$ is (block) separable, \ie $g(x)=\sum_{i=1}^N g_i(x_i)$, $x_i\in\Re^{n_i}$, $\sum_{i=1}^N n_i=n$.
In such cases every $P\in\partial_C(\prox_{\gamma g})(x)$ is a (block)
diagonal matrix. This has favorable computational implications especially for large-scale problems.
For example, if $g$ is the $\ell_1$ norm or the indicator function of a box,
then the elements of $\partial_C\prox_{\gamma g}(x)$ (or $\partial_B\prox_{\gamma g}(x)$) are
diagonal matrices with diagonal elements in $[0,1]$ (or in $\{0,1\}$).
\begin{proposition}[separability]\label{prop:Separ}
If $g:\Re^n\to\overline{\Re}$ is (block) separable then every element of $\partial_B(\prox_{\gamma g})(x)$ and $\partial_C(\prox_{\gamma g})(x)$ is (block) diagonal.
\end{proposition}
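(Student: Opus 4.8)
The plan is to exploit the fact that separability of $g$ is inherited by its proximal mapping, and then to track how block-diagonality propagates through the limiting and convexification operations defining $\partial_B$ and $\partial_C$. The whole argument reduces to one structural observation about the prox followed by elementary linear algebra.

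First I would establish that $\prox_{\gamma g}$ separates across blocks. Since $g(x)=\sum_{i=1}^N g_i(x_i)$ and the quadratic penalty in~\eqref{eq:prox} also decouples as $\frac{1}{2\gamma}\|u-x\|^2=\sum_{i=1}^N\frac{1}{2\gamma}\|u_i-x_i\|^2$, the minimization over $u$ in~\eqref{eq:prox} splits into $N$ independent minimizations, one per block. Hence
\[
\prox_{\gamma g}(x)=\left(\prox_{\gamma g_1}(x_1),\ldots,\prox_{\gamma g_N}(x_N)\right),
\]
i.e.\ $\prox_{\gamma g}$ is a product map whose $i$-th output block $T_i(x_i)\eqdef\prox_{\gamma g_i}(x_i)$ depends only on the $i$-th input block $x_i$.

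Next I would read off the block-diagonal structure at points of classical differentiability and then push it through the two generalized derivatives. For a product map the aggregate map is differentiable at $x$ exactly when each $T_i$ is differentiable at $x_i$, in which case the Jacobian is $\nabla\prox_{\gamma g}(x)=\diag(\nabla T_1(x_1),\ldots,\nabla T_N(x_N))$, which is block diagonal because the cross-block partial derivatives vanish. Thus every Jacobian of $\prox_{\gamma g}$ on its differentiability set $C_{\prox_{\gamma g}}$ is block diagonal. By Definition~\ref{def:Jacs}, any $H\in\partial_B(\prox_{\gamma g})(x)$ is a limit of such Jacobians $\nabla\prox_{\gamma g}(x^k)$; since the block-diagonal matrices with the fixed block pattern $n_1,\ldots,n_N$ form a linear subspace of $\Re^{n\times n}$, hence a closed set, the limit $H$ is again block diagonal. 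Passing to $\partial_C=\conv(\partial_B)$, the same subspace is convex, so any convex combination of block-diagonal matrices remains block diagonal, which settles the claim for $\partial_C$ as well.

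I do not expect a genuine obstacle here: once the separation of the prox in the first step is in place, everything rests on the fact that the block-diagonal matrices with a fixed pattern are a closed linear subspace, automatically stable under both limits (for $\partial_B$) and convex combinations (for $\partial_C$). The only point deserving care is the differentiability statement for a product map, namely that differentiability of the aggregate map at $x$ is equivalent to, and assembled from, differentiability of the individual blocks $T_i$ at $x_i$, so that $C_{\prox_{\gamma g}}$ is precisely the product of the block differentiability sets and the approximating sequence $x^k$ in the definition of $\partial_B$ can indeed be chosen within it.
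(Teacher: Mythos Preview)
Your proposal is correct and follows the same approach as the paper: establish that $\prox_{\gamma g}$ decomposes as a product map across the blocks, then invoke Definition~\ref{def:Jacs}. The paper's proof is a two-line sketch that simply states the prox decomposition and says the result ``follows directly by Definition~\ref{def:Jacs}''; you have made explicit the closure and convexity arguments that the paper leaves implicit.
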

\begin{proof}
Since $g$ is block separable, its proximal mapping has the form 
$$\prox_{\gamma g}(x)=(\prox_{\gamma g_1}(x_1),\ldots,\prox_{\gamma g_N}(x_N)).$$ 
The result follows directly by Definition~\ref{def:Jacs}.
\iftoggle{svver}{\qed}{}
\end{proof}

The following proposition provides a connection between the generalized Jacobian of the proximal mapping for a convex function and that of its conjugate, stemming from the celebrated Moreau's decomposition~\cite[Th. 14.3]{bauschke2011convex}.
\begin{proposition}[Moreau's decomposition]\label{prop:MorDec}
Suppose that $g\in\mathcal{S}^0(\Re^n)$. Then
\begin{align*}
\partial_B(\prox_{\gamma g^\star})(x)&=\{P=I-Q\left|\right.Q\in\partial_B(\prox_{g/\gamma})(x/\gamma)\},\\
\partial_C(\prox_{\gamma g^\star})(x)&=\{P=I-Q\left|\right.Q\in\partial_C(\prox_{g/\gamma})(x/\gamma)\}.
\end{align*}
\end{proposition}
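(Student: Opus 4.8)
The plan is to reduce everything to the extended Moreau decomposition cited as \cite[Th. 14.3]{bauschke2011convex}. Applying it with $f=g^\star$, and using $g^{\star\star}=g$ (which holds since $g\in\mathcal{S}^0(\Re^n)$), yields the pointwise identity
$$\prox_{\gamma g^\star}(x) = x - \gamma\,\prox_{g/\gamma}(x/\gamma)\qquad\text{for all }x\in\Re^n.$$
Write $T\eqdef\prox_{\gamma g^\star}$ and $S\eqdef\prox_{g/\gamma}$, so that $T(x)=x-\gamma S(x/\gamma)$. Both $T$ and $S$ are globally Lipschitz, hence differentiable almost everywhere by Rademacher's theorem, and the whole argument is about transporting Jacobian information from $S$ to $T$ through this affine-plus-rescaling relation.

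First I would record the classical derivative wherever it exists. Since $x\mapsto x$ and $x\mapsto x/\gamma$ are smooth, $T$ is differentiable at $x$ if and only if $S$ is differentiable at $x/\gamma$; that is, $x\in C_T\iff x/\gamma\in C_S$, where $C_T,C_S$ denote the respective full-measure differentiability sets from Definition~\ref{def:Jacs}. At such points the chain rule gives
$$\nabla T(x) = I - \gamma\,\nabla S(x/\gamma)\cdot\gamma^{-1} = I - \nabla S(x/\gamma),$$
the crucial point being that the outer factor $\gamma$ and the inner factor $\gamma^{-1}$ coming from differentiating $x/\gamma$ cancel exactly, leaving a clean $I-\nabla S(x/\gamma)$ with no residual scaling.

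Next I would pass to the $B$-subdifferential. The map $x\mapsto x/\gamma$ is a linear homeomorphism carrying $C_T$ bijectively onto $C_S$, and hence carrying sequences $x^k\to x$ in $C_T$ to sequences $y^k\eqdef x^k/\gamma\to x/\gamma$ in $C_S$, and conversely. Therefore $H\in\partial_B T(x)$ iff $H=\lim_k\nabla T(x^k)=\lim_k\big(I-\nabla S(y^k)\big)=I-\lim_k\nabla S(y^k)$ for some admissible sequence, which is exactly the statement that $H=I-Q$ with $Q\in\partial_B S(x/\gamma)$. This establishes the first displayed equality of the proposition. For the Clarke Jacobian I would simply take convex hulls: since $Q\mapsto I-Q$ is an affine bijection of $\Re^{n\times n}$, it commutes with $\conv$, so
$$\partial_C T(x)=\conv\big(\partial_B T(x)\big)=\{I-Q\mid Q\in\conv(\partial_B S(x/\gamma))\}=\{I-Q\mid Q\in\partial_C S(x/\gamma)\},$$
which is the second equality.

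The only real care needed — and the step I would treat as the main obstacle — is the bookkeeping around the rescaling: one must verify both that the differentiability sets correspond under $x\mapsto x/\gamma$ and that the two chain-rule factors cancel to produce precisely $I-Q$ rather than some $\gamma$-dependent expression. Everything else is a direct transcription of Definition~\ref{def:Jacs}, and the passage to $\partial_C$ is immediate once the $B$-subdifferential identity is in hand.
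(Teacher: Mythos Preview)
Your proof is correct and follows essentially the same route as the paper: invoke the Moreau decomposition $\prox_{\gamma g^\star}(x)=x-\gamma\prox_{g/\gamma}(x/\gamma)$, read off the $B$-subdifferential directly from Definition~\ref{def:Jacs} (the paper simply says ``follows directly'' where you carefully spell out the correspondence of differentiability sets and the chain-rule cancellation), and then pass to $\partial_C$ by noting that the affine map $Q\mapsto I-Q$ commutes with taking convex hulls. Your version is more detailed but not different in substance.
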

\begin{proof}
Using Moreau's decomposition we have
$$\prox_{\gamma g^\star}(x)=x-\gamma\prox_{g/\gamma}(x/\gamma).$$
The first result follows directly by Definition~\ref{def:Jacs}, since $\prox_{\gamma g^\star}$ is expressed as the difference of two functions, one of which is continuously differentiable.
The second result follows from the fact that, with a little abuse of notation,
$$\conv\{I-Q\left|\right.Q\in\partial_B(\prox_{g/\gamma})(x/\gamma)\} = I-\conv(\partial_B(\prox_{g/\gamma})(x/\gamma)).$$
\iftoggle{svver}{\qed}{}
\end{proof}

\emph{Semismooth} mappings~\cite{qi1993nonsmooth} are precisely Lipschitz continuous mappings  for which the generalized Jacobian (and consequenlty the $B$-subdifferential) furnishes a first-order  approximation. 
\begin{definition}\label{def:Semismooth}
Let $G:\Re^n\to\Re^m$ be locally Lipschitz continuous 
at ${x}$. We say that $G$ is \emph{semismooth} at $\bar{x}$ if 
$$\|G(x)+H(\bar{x}-x)-G(\bar{x})\|=o(\|x-\bar{x}\|)\ \textrm{as}\ x\to\bar{x},\ \forall H\in\partial_CG(x)$$
whereas $G$ is said to be \emph{strongly semismooth} if $o(\|x-\bar{x}\|)$ can be replaced with  $O(\|x-\bar{x}\|^2)$.
\end{definition}
We remark that the original definition of semismoothness given by~\cite{mifflin1977semismooth} requires $G$ to be directionally differentiable at $x$.  The definition given here is the one employed by~\cite{gowda2004inverse}. Another worth spent remark is that $\partial_C G(x)$ can be replaced with the smaller set $\partial_B G(x)$ in Definition~\ref{def:Semismooth}.

Fortunately, the class of semismooth mappings is rich enough to include proximal mappings of most of the functions arising in interesting applications. For example \emph{piecewise smooth ($PC^1$) mappings}  are semismooth everywhere. Recall that a continuous mapping $G:\Re^n\to\Re^m$ is $PC^1$ if there exists a finite collection of smooth mappings $G_i:\Re^n\to\Re$, $i=1,\ldots,N$ such that
$$G(x)\in\{G_1(x),\ldots,G_N(x)\},\quad\forall x\in\Re^n.$$
The definition of $PC^1$ mappings given here is less general than the one of, e.g.,~\cite[Ch. 4]{scholtes2012introduction} but it suffices for our purposes. 
For every $x\in\Re^n$ we introduce the set of essentially active indices
$$I_G^e(x)=\{i\in[N]\ |\ x\in\cl(\Int\{x\ |\ G(x)=G_i(x)\})\}\footnote{$[N]\eqdef\{1,\ldots,N\}$ for any positive integer $N$.}.$$
In other words, $I_G^e(x)$ contains only indices of the pieces $G_i$ for which there exists a full-dimensional set on which $G$ agrees with $G_i$. In accordance to Definition~\ref{def:Jacs}, the generalized Jacobian of $G$ at $x$ is the convex hull of the Jacobians of the essentially active pieces, \ie \cite[Prop.~4.3.1]{scholtes2012introduction}
\begin{equation}\label{eq:PCJac}
\partial_C G(x)=\conv\{\nabla G_i(x)\ |\ i\in I_G^e(x)\}.
\end{equation}
As it will be clear in Section~\ref{sec:Examples}, in many interesting cases $\prox_{\gamma g}$ is $PC^1$ and thus semismooth. Furthermore, through~\eqref{eq:PCJac} an element of $\partial_C\prox_{\gamma g}(x)$ can be easily computed once $\prox_{\gamma g}(x)$ has been computed. 

A special but important class of convex functions  whose proximal mapping is $PC^1$ are piecewise quadratic (PWQ) functions. A convex function $g\in\mathcal{S}^0(\Re^n)$ is called PWQ if $\dom g$ can be represented as the union
of finitely many polyhedral sets, relative to each of which $g(x)$ is given by an expression of the form $(1/2)x'Qx+q'x+c$ ($Q\in\Re^{n\times n}$ must necessarily be symmetric positive semidefinite)~\cite[Def.~10.20]{rockafellar2011variational}. The class of PWQ functions is quite general since it includes e.g. polyhedral norms, indicators and support functions of polyhedral sets, and it is closed under addition, composition with affine mappings, conjugation, inf-convolution and inf-projection~\cite[Prop.~10.22, Proposition~11.32]{rockafellar2011variational}. It turns out that the proximal mapping of a PWQ function is \emph{piecewise affine} (PWA)~\cite[12.30]{rockafellar2011variational} ($\Re^n$ is partitioned in polyhedral sets relative to each of which $\prox_{\gamma g}$ is an affine mapping), hence  strongly semismooth~\cite[Prop.~7.4.7]{facchinei2003finite}. 
Another example of a proximal mapping that it is strongly semismooth is the projection operator over symmetric cones~ \cite{sun2002semismooth}.
We refer the reader to \cite{mifflin1999properties,meng2008lagrangian,meng2009moreau,meng2005semismoothness} for conditions that guarantee semismoothness of the proximal mapping for more general convex functions.

\subsection{Approximate generalized Hessian for $F_\gamma$}
Having established properties of generalized Jacobians for proximal mappings, we are now in position to construct a generalized Hessian for $F_\gamma$ that will allow the development of Newton-like methods with fast asymptotic convergence rates. The obvious route to follow is to assume that $\nabla F_\gamma$ is semismooth and employ $\partial_C(\nabla F_\gamma)$ as a generalized Hessian for $F_\gamma$. However, semismoothness would require extra assumptions on $f$.  Furthermore, the form of $\partial_C(\nabla F_\gamma)$ is quite complicated involving third-order partial derivatives of $f$. 
On the other hand, what is really needed to devise Newton-like algorithms with fast local convergence rates is a \emph{linear Newton approximation (LNA)}, cf. Definition~\ref{def:LNA},  at some stationary point of $F_\gamma$, which by Theorem \ref{Th:PropFg}\eqref{cor:Equivalence} is also a minimizer of $F$, provided that $\gamma\in (0,1/L_f)$. 
The approach we follow is largely based on \cite{sun1997computable}, \cite[Prop.~10.4.4]{facchinei2003finite}.
The following definition is taken from \cite[Def.~7.5.13]{facchinei2003finite}.

\begin{definition}\label{def:LNA}
Let $G:\Re^n\to\Re^m$ be continuous on $\Re^n$. We say that $G$ admits a linear Newton approximation at a vector $\bar{x}\in\Re^n$ if there exists a set-valued mapping $\mathscr{G}:\Re^n\rightrightarrows\Re^{n\times m}$ that has nonempty compact images, is upper semicontinuous at $\bar{x}$ and for any $H\in\mathscr{G}(x)$ 
$$\|G(x)+H(\bar{x}-x)-G(\bar{x})\|= o(\|x-\bar{x}\|)\ \textrm{ as } x\to\bar{x}.$$
If instead
$$\|G(x)+H(\bar{x}-x)-G(\bar{x})\|= O(\|x-\bar{x}\|^2)\ \textrm{ as } x\to\bar{x},$$
then we say that $G$ admits a strong linear Newton approximation at $\bar{x}$.
\end{definition}

Arguably the most notable example of a LNA for semismooth mappings is the generalized Jacobian, cf. Definition~\ref{def:Jacs}. However, semismooth mappings can admit LNAs different from the generalized Jacobian. More importantly, mappings that are not semismooth may also admit a LNA.
It turns out that we can define a LNA for $\nabla F_\gamma$ at any stationary
point, whose elements have a simpler form than those of $\partial_C(\nabla F_\gamma)$,
without assuming semismoothness of  $\nabla F_\gamma$. We call it \emph{approximate
generalized Hessian} and it is given by
\begin{equation*}\label{eq:LNAF}
\hat{\partial}^2 F_\gamma(x)\eqdef\{\gamma^{-1}(I-\gamma\nabla^2 f(x))(I-P(I-\gamma\nabla^2 f(x)))\ |\ P\in\partial_C(\prox_{\gamma g})(x-\gamma\nabla f(x))\}.
\end{equation*}
The key idea in the definition of $\hat{\partial}^2 F_\gamma$, reminiscent to the Gauss-Newton method for nonlinear least-squares problems, is to omit terms vanishing at $x_\star$ that contain third-order derivatives of $f$. The following proposition shows that $\hat{\partial}^2 F_\gamma$ is indeed a LNA of $\nabla F_\gamma$ at any $x_\star\in X_\star$.

\begin{proposition}\label{prop:LNAprops1}
Let $T(x)=x-\gamma\nabla f(x)$, $\gamma\in(0,1/L_f)$ and $x_\star\in X_\star$. Then
\begin{enumerate}[\rm (i)]
\item if $\prox_{\gamma g}$ is semismooth at $T({x}_\star)$, then $\hat{\partial}^2 F_\gamma$
is a LNA for $\nabla F_\gamma$ at ${x}_\star$,
\item if $\prox_{\gamma g}$ is strongly semismooth at $T({x}_\star)$, and $\nabla^2 f$
is locally Lipschitz around $x_\star$, then $\hat{\partial}^2 F_\gamma$
is a strong LNA for $\nabla F_\gamma$ at ${x}_\star$.
\end{enumerate}
\end{proposition}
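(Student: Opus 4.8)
The plan is to check the three requirements of Definition~\ref{def:LNA} for the set-valued map $\hat{\partial}^2 F_\gamma$ at $x_\star$: nonempty compact images, upper semicontinuity at $x_\star$, and the first-order approximation estimate. The structural requirements are routine; the estimate is the heart of the argument and is where semismoothness enters. Throughout I write $A(x) = I - \gamma\nabla^2 f(x)$, and I use $\nabla F_\gamma(x) = \gamma^{-1}A(x)\bigl(x - \prox_{\gamma g}(T(x))\bigr)$ together with $\nabla F_\gamma(x_\star) = 0$ (Theorem~\ref{Th:PropFg}\eqref{prop:DerPen}).

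For the structural part, Theorem~\ref{th:JacProx} and the remarks after Definition~\ref{def:Jacs} give that $\partial_C(\prox_{\gamma g})(T(x))$ is nonempty, convex, compact, with every element of norm at most $1$; moreover $x \mapsto \partial_C(\prox_{\gamma g})(T(x))$ is outer semicontinuous, being the composition of the outer semicontinuous map $\partial_C\prox_{\gamma g}$ with the continuous map $T$. Since each element of $\hat{\partial}^2 F_\gamma(x)$ is obtained from such a $P$ through the map $P \mapsto \gamma^{-1}A(x)(I - PA(x))$, which is jointly continuous in $(x,P)$, I would conclude that $\hat{\partial}^2 F_\gamma$ has nonempty compact images and is outer (hence upper) semicontinuous at $x_\star$.

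For the estimate, fix $H = \gamma^{-1}A(x)(I - PA(x)) \in \hat{\partial}^2 F_\gamma(x)$ with $P \in \partial_C(\prox_{\gamma g})(T(x))$. Factoring $\gamma^{-1}A(x)$ out of $\nabla F_\gamma(x) + H(x_\star - x)$ and inserting the optimality condition $x_\star = \prox_{\gamma g}(T(x_\star))$ reduces the bracketed term to
\[
\prox_{\gamma g}(T(x_\star)) - \prox_{\gamma g}(T(x)) - P\,A(x)(x_\star - x).
\]
Because $f$ is $C^2$, a first-order Taylor expansion gives $\nabla f(x_\star) - \nabla f(x) = \nabla^2 f(x)(x_\star - x) + r(x)$ with $r(x) = o(\|x - x_\star\|)$, whence $A(x)(x_\star - x) = (T(x_\star) - T(x)) + \gamma r(x)$. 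Substituting, the bracket splits as
\[
\bigl[\prox_{\gamma g}(T(x_\star)) - \prox_{\gamma g}(T(x)) - P(T(x_\star) - T(x))\bigr] - \gamma P\,r(x).
\]
The first group is exactly the semismoothness residual of $\prox_{\gamma g}$ at $T(x_\star)$, evaluated with $P \in \partial_C(\prox_{\gamma g})(T(x))$, so it is $o(\|T(x) - T(x_\star)\|) = o(\|x - x_\star\|)$ by Lipschitz continuity of $T$; the second group is $o(\|x - x_\star\|)$ since $\|P\| \le 1$ and $r(x) = o(\|x - x_\star\|)$. As $\|A(x)\| \le 1$ for $\gamma \in (0,1/L_f)$, multiplying back by $\gamma^{-1}A(x)$ preserves the order, giving $\|\nabla F_\gamma(x) + H(x_\star - x)\| = o(\|x - x_\star\|)$ uniformly in $P$, which is (i). For (ii), strong semismoothness upgrades the first group to $O(\|x-x_\star\|^2)$ and local Lipschitz continuity of $\nabla^2 f$ upgrades the Taylor remainder to $r(x) = O(\|x-x_\star\|^2)$, so the estimate becomes $O(\|x-x_\star\|^2)$.

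The key idea, and the only delicate step, is the substitution $A(x)(x_\star - x) = (T(x_\star) - T(x)) + \gamma r(x)$: it converts the metrically scaled Newton increment into the raw argument increment of $\prox_{\gamma g}$, which is precisely what lets semismoothness of the proximal map be invoked directly. This is the Gauss--Newton-type omission of the third-order term $\gamma r(x)$ (involving derivatives of $\nabla^2 f$) mentioned before the statement; the entire proof rests on showing that this omitted term is negligible at the relevant order, and the uniform bound $\|P\| \le 1$ from Theorem~\ref{th:JacProx} is what makes that transparent.
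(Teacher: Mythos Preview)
Your argument is correct. The structural requirements are handled properly, and the core estimate is established cleanly: the factoring of $\gamma^{-1}A(x)$, the substitution $A(x)(x_\star-x)=(T(x_\star)-T(x))+\gamma r(x)$ via Taylor's theorem, and the resulting split into the semismoothness residual of $\prox_{\gamma g}$ plus a term controlled by $\|P\|\le 1$ and the Taylor remainder, all check out.

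The paper's proof reaches the same conclusion by a more modular route: it invokes the LNA composition calculus from \cite[Th.~7.5.17, Cor.~7.5.18]{facchinei2003finite} to obtain first an LNA $\mathscr{P}_\gamma$ for $P_\gamma=\prox_{\gamma g}\circ T$, then an LNA $\mathscr{G}_\gamma$ for $G_\gamma=\gamma^{-1}(I-P_\gamma)$, and finally bounds $\|\nabla F_\gamma(x)+H(x_\star-x)\|\le\|G_\gamma(x)+Z(x_\star-x)-G_\gamma(x_\star)\|$ for the appropriate $Z\in\mathscr{G}_\gamma(x)$. Your direct computation is essentially an unrolled, self-contained instance of that composition theorem: the Taylor step $A(x)(x_\star-x)=(T(x_\star)-T(x))+\gamma r(x)$ is precisely what the cited theorem uses internally to pass from an LNA of the outer map to an LNA of the composition. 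The paper's approach is shorter and highlights the general mechanism; yours is more elementary, avoids external references, and makes transparent exactly where each hypothesis ($f\in C^2$, $\|P\|\le 1$, local Lipschitzness of $\nabla^2 f$ for the strong case) enters.
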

\begin{proof}
See Appendix.
\end{proof}

The next proposition shows that every element of $\hat{\partial}^2 F_\gamma(x)$ is a symmetric positive semidefinite matrix, whose eigenvalues are lower and upper bounded uniformly over all $x\in\Re^n$.
\begin{proposition}\label{prop:PSDHess}
Any $H\in\hat{\partial}^2 F_\gamma(x)$ is symmetric positive semidefinite and satisfies 
\begin{equation}
\xi_1\|d\|^2\leq d'Hd\leq \xi_2\|d\|^2,\ \forall d\in\Re^n,
\end{equation}
where 
$\xi_1\eqdef\min\left\{(1-\gamma\mu_f)\mu_f,(1-\gamma L_f)L_f\right\}$, 
$\xi_2\eqdef\gamma^{-1}(1-\gamma\mu_f)$.
\end{proposition}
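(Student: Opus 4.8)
The plan is to exploit the structure $H=\gamma^{-1}(M-MPM)$, where I set $M\eqdef I-\gamma\nabla^2 f(x)$ and $P\in\partial_C(\prox_{\gamma g})(x-\gamma\nabla f(x))$, thereby reducing every claim to eigenvalue estimates for the two symmetric matrices $M$ and $P$. First I would record the relevant spectral information. Since $f\in\mathcal{S}^{2,1}_{\mu_f,L_f}(\Re^n)$, the Hessian satisfies $\mu_f I\preceq\nabla^2 f(x)\preceq L_f I$, so $M$ is symmetric with spectrum contained in $[1-\gamma L_f,\,1-\gamma\mu_f]$; because $\gamma\in(0,1/L_f)$ we have $1-\gamma L_f>0$, hence $M$ is symmetric positive definite. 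By Theorem~\ref{th:JacProx} every admissible $P$ is symmetric positive semidefinite with $\|P\|\leq 1$, i.e. $0\preceq P\preceq I$.

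Symmetry of $H$ is then immediate: $M$ is symmetric and, as $P$ is symmetric, so is $MPM$; thus $H=\gamma^{-1}(M-MPM)$ is symmetric. For the quadratic form I would write $d'Hd=\gamma^{-1}\bigl(d'Md-(Md)'P(Md)\bigr)$ and sandwich the middle term using $0\preceq P\preceq I$. For the upper bound, $(Md)'P(Md)\geq 0$ gives $d'Hd\leq\gamma^{-1}d'Md\leq\gamma^{-1}(1-\gamma\mu_f)\|d\|^2=\xi_2\|d\|^2$, using $\lambda_{\max}(M)=1-\gamma\mu_f$. For the lower bound, $(Md)'P(Md)\leq\|Md\|^2=d'M^2d$ yields $d'Hd\geq\gamma^{-1}d'M(I-M)d=d'\nabla^2 f(x)\,(I-\gamma\nabla^2 f(x))\,d$, since $I-M=\gamma\nabla^2 f(x)$ and $M$ commutes with $\nabla^2 f(x)$.

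The one step requiring a little care is matching this last constant with $\xi_1$. Because $\nabla^2 f(x)$ and $M$ commute, they are simultaneously diagonalizable, so the eigenvalues of $\nabla^2 f(x)(I-\gamma\nabla^2 f(x))$ are exactly $\lambda(1-\gamma\lambda)$ as $\lambda$ ranges over the eigenvalues of $\nabla^2 f(x)$, all lying in $[\mu_f,L_f]$. The scalar map $\lambda\mapsto\lambda(1-\gamma\lambda)$ is concave, hence its minimum over $[\mu_f,L_f]$ is attained at an endpoint, giving $\min\{\mu_f(1-\gamma\mu_f),\,L_f(1-\gamma L_f)\}=\xi_1$ and therefore $d'Hd\geq\xi_1\|d\|^2$. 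Positive semidefiniteness then comes for free: since $\gamma<1/L_f$ and $\mu_f\geq 0$, both endpoint values are nonnegative, so $\xi_1\geq 0$ and $d'Hd\geq 0$ for every $d$.

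I do not anticipate a genuine obstacle here; the argument is essentially the pointwise analogue of the quadratic computation underlying Theorem~\ref{th:ProxPropQuad} (via Lemma~\ref{lem:eigen}), now carried out at a fixed $x$ with $\nabla^2 f(x)$ playing the role of $Q$. The only new ingredient relative to the quadratic case is the two-sided bound $0\preceq P\preceq I$ furnished by Theorem~\ref{th:JacProx}, which is precisely what lets me discard the $P$-term for the upper estimate and replace $P$ by $I$ for the lower estimate; the mild subtlety is simply to notice that the lower estimate collapses to minimizing a concave quadratic over $[\mu_f,L_f]$, whose endpoint minimum is exactly $\xi_1$.
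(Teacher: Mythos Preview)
Your proposal is correct and follows essentially the same route as the paper's proof: write $H=\gamma^{-1}(M-MPM)$ with $M=I-\gamma\nabla^2 f(x)$, use Theorem~\ref{th:JacProx} to sandwich $0\preceq P\preceq I$, drop the $P$-term for the upper bound, replace $P$ by $I$ for the lower bound, and reduce to the eigenvalues of $\nabla^2 f(x)(I-\gamma\nabla^2 f(x))$. The only cosmetic difference is that the paper invokes Lemma~\ref{lem:eigen} for the endpoint minimum of $\lambda\mapsto\lambda(1-\gamma\lambda)$ over $[\mu_f,L_f]$, whereas you reprove that concavity argument inline.
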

\begin{proof}
See Appendix.
\end{proof}
The next lemma shows uniqueness of the solution of~\eqref{eq:GenProb} under a nonsingularity assumption on the elements of $\hat{\partial}^2F_\gamma(x_\star)$. Its proof is similar to~\cite[Lem.~7.2.10]{facchinei2003finite}, however $\nabla F_\gamma$ is not required to be locally Lipschitz around $x_\star$.
\begin{lemma}\label{lem:sharpMin}
Let $x_\star\in X_\star$. Suppose that $\gamma\in(0,1/L_f)$,  $\prox_{\gamma g}$ is semismooth at $x_\star-\nabla f(x_\star)$ and every element of $\hat{\partial}^2F_\gamma(x_\star)$ is nonsingular. Then $x_\star$ is the unique solution of~\eqref{eq:GenProb}. In fact, there exist positive constants $\delta$ and $c$ such that 
$$\|x-x_\star\|\leq c\|G_\gamma(x)\|,\ \mathrm{for\ all\ } x\ \mathrm{with}\ \|x-x_\star\|\leq\delta.$$
\end{lemma}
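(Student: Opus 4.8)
The plan is to derive a local error bound from the fact that $\hat{\partial}^2 F_\gamma$ is a linear Newton approximation (LNA) of $\nabla F_\gamma$ at $x_\star$ whose elements are \emph{uniformly} nonsingular near $x_\star$, and then to obtain uniqueness by combining the resulting local isolation of $x_\star$ with convexity of the solution set. First I would record the two facts that drive everything. Since $\prox_{\gamma g}$ is semismooth at $T(x_\star)$ and $\gamma\in(0,1/L_f)$, Proposition~\ref{prop:LNAprops1}(i) guarantees that $\hat{\partial}^2 F_\gamma$ is a LNA for $\nabla F_\gamma$ at $x_\star$; in particular, by Definition~\ref{def:LNA}, this set-valued mapping has nonempty compact images, is upper semicontinuous at $x_\star$, and every $H\in\hat{\partial}^2 F_\gamma(x)$ satisfies $\|\nabla F_\gamma(x)+H(x_\star-x)-\nabla F_\gamma(x_\star)\|=o(\|x-x_\star\|)$ as $x\to x_\star$. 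Because $x_\star\in X_\star=\argmin F_\gamma$ by Theorem~\ref{Th:PropFg}(\ref{cor:Equivalence}), we have $\nabla F_\gamma(x_\star)=0$, so this reads $\nabla F_\gamma(x)=H(x-x_\star)+o(\|x-x_\star\|)$.

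The main obstacle is to upgrade the pointwise nonsingularity hypothesis at $x_\star$ into a uniform lower bound on the smallest singular values of the matrices $H\in\hat{\partial}^2 F_\gamma(x)$ for $x$ in a neighborhood of $x_\star$. I would argue by compactness: the map $H\mapsto\sigma_{\min}(H)$ is continuous, and since $\hat{\partial}^2 F_\gamma(x_\star)$ is compact and consists of nonsingular matrices, $m_0\eqdef\min\{\sigma_{\min}(H):H\in\hat{\partial}^2 F_\gamma(x_\star)\}>0$. The open set $\{H:\sigma_{\min}(H)>m_0/2\}$ contains $\hat{\partial}^2 F_\gamma(x_\star)$, so upper semicontinuity yields a radius $\delta_0>0$ such that $\hat{\partial}^2 F_\gamma(x)$ is contained in this set whenever $\|x-x_\star\|\le\delta_0$. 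Setting $\kappa\eqdef m_0/2>0$, this means $\|Hd\|\ge\kappa\|d\|$ for all $d\in\Re^n$ and every $H\in\hat{\partial}^2 F_\gamma(x)$ with $\|x-x_\star\|\le\delta_0$.

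With this bound the error estimate is routine. From the expansion above, for $\|x-x_\star\|\le\delta_0$ and any admissible $H$,
$$\|\nabla F_\gamma(x)\|\ge\|H(x-x_\star)\|-o(\|x-x_\star\|)\ge\kappa\|x-x_\star\|-o(\|x-x_\star\|),$$
so shrinking $\delta_0$ to some $\delta\in(0,\delta_0]$ that absorbs the $o$-term into $(\kappa/2)\|x-x_\star\|$ gives $\|\nabla F_\gamma(x)\|\ge(\kappa/2)\|x-x_\star\|$ on the ball of radius $\delta$. Finally I would pass from $\nabla F_\gamma$ to $G_{\gamma}$ via~\eqref{eq:DerPen}: since $\nabla^2 f(x)$ has eigenvalues in $[\mu_f,L_f]$ and $\gamma<1/L_f$, the symmetric matrix $I-\gamma\nabla^2 f(x)$ is positive definite with operator norm $1-\gamma\mu_f\le 1$, whence $\|\nabla F_\gamma(x)\|=\|(I-\gamma\nabla^2 f(x))G_{\gamma}(x)\|\le\|G_{\gamma}(x)\|$. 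Combining these inequalities yields $\|x-x_\star\|\le(2/\kappa)\|G_{\gamma}(x)\|$ for all $x$ with $\|x-x_\star\|\le\delta$, which is the claimed bound with $c=2/\kappa$.

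Uniqueness then follows at once. By the optimality condition~\eqref{eq:OptCond}, any $\bar{x}\in X_\star$ satisfies $G_{\gamma}(\bar{x})=0$; if in addition $\|\bar{x}-x_\star\|\le\delta$, the error bound forces $\|\bar{x}-x_\star\|\le c\cdot 0=0$, i.e. $\bar{x}=x_\star$, so $x_\star$ is the only solution in the ball of radius $\delta$. Since $F=f+g$ is convex, $X_\star$ is a convex set, and a convex set possessing an isolated point must be a singleton; therefore $X_\star=\{x_\star\}$, and $x_\star$ is the unique solution of~\eqref{eq:GenProb}.
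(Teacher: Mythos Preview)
Your proof is correct and follows essentially the same approach as the paper: both rely on Proposition~\ref{prop:LNAprops1} to get the LNA expansion, on upper semicontinuity and compactness of $\hat{\partial}^2 F_\gamma$ to propagate nonsingularity to a neighborhood, and on the bound $\|\nabla F_\gamma(x)\|\le\|G_\gamma(x)\|$ to pass from $\nabla F_\gamma$ to $G_\gamma$. The only difference is stylistic---the paper argues by contradiction along a sequence where the error bound fails, whereas you give a direct estimate; your version has the minor advantage of yielding an explicit constant $c=2/\kappa$, and you also spell out the convexity-of-$X_\star$ step for uniqueness more carefully than the paper does.
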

\begin{proof}
See Appendix.
\end{proof}

\section{Forward-Backward Newton-CG Methods}\label{sec:FBNCG}
Having established the equivalence between   minimizing $F$ and $F_\gamma$, as well as a LNA for $\nabla F_\gamma$, it is now very easy to design globally convergent Newton-like algorithms with fast asymptotic convergence rates, for computing a $x_\star\in X_\star$. Algorithm~\ref{al:PNM} is a standard line-search method for minimizing $F_\gamma$, where a conjugate gradient method is employed to solve (approximately) the corresponding regularized Newton system. Therefore our algorithm does not require to form an element of the generalized Hessian of $F_\gamma$ explicitly. It only requires the computation of the corresponding matrix-vector product and is thus suitable for large-scale problems. Similarly, there is no need to form explicitly the Hessian of $f$, in order to compute the  directional derivative $\nabla F_\gamma(x^k)'d^k$ needed in the backtracking procedure for computing the stepsize~\eqref{eq:Armijo}; only matrix-vector products with $\nabla^2 f(x)$ are required.
Under nonsingularity of the elements of $\hat{\partial}^2 F_\gamma(x_\star)$,  eventually the stepsize becomes equal to 1 and Algorithm~\ref{al:PNM} reduces to a regularized version of the (undamped) linear Newton method \cite[Alg. 7.5.14]{facchinei2003finite} for solving $\nabla F_\gamma(x)=0$.
\begin{algorithm}
\LinesNumbered
\DontPrintSemicolon
\caption{Forward-Backward Newton-CG Method (FBN-CG I)} \label{al:PNM}
\KwIn{$\gamma\in (0,1/L_{f})$, $\sigma\in \left(0,1/2\right)$, $\bar{\eta}\in (0,1)$, $\zeta\in (0,1)$, $\rho\in(0,1]$, $x^0\in\Re^n$, $k=0$}

 Select a $H^ k\in \hat{\partial}^2 F_{\gamma}(x^ k)$. Apply CG to
 \begin{equation}\label{eq:RegNewtSys}
(H^ k+\delta_ k I)d^ k=-\nabla F_\gamma(x^ k)
\end{equation}
to compute a $d^ k\in\Re^n$ that satisfies
\begin{equation}\label{eq:InRegNewtSys}
\|(H^ k+\delta_ k I)d^ k+\nabla F_\gamma(x^ k)\|\leq \eta_ k\|\nabla F_\gamma(x^ k)\|,
\end{equation}
where
\begin{subequations}
\begin{align}
\delta_ k&=\zeta\|\nabla F_\gamma(x^ k)\|,\label{eq:deltas}\\
\eta_ k&=\min\{\bar{\eta},\|\nabla F_\gamma(x^ k)\|^\rho\}.\label{eq:etas}
\end{align}
\end{subequations}\;
Compute 
$\tau_ k=\max\{2^{-i}\ |\ i=0,1,2,\ldots\}$ such that
\begin{equation}\label{eq:Armijo}
F_{\gamma}({x}^{ k}+\tau_ k d^ k)\leq F_{\gamma}({x}^{ k})+\sigma \tau_ k\nabla F_{\gamma}({x}^{ k})'d^{ k}.
\end{equation}\;
${x}^{ k+1}\gets {x}^{ k}+\tau_ k d^ k$\;
$ k\gets k+1$ and go to Step 1.\;
\end{algorithm}

The next theorem delineates the basic convergence properties of Algorithm~\ref{al:PNM}.
\begin{theorem}\label{th:ConvAlg1}
Every accumulation point of the sequence $\{x^ k\}$ generated by Algorithm~\ref{al:PNM} belongs to $X_\star$.
\end{theorem}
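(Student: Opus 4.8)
The plan is to show that any accumulation point $\bar x$ of $\{x^k\}$ is a stationary point of $F_\gamma$, i.e. $\nabla F_\gamma(\bar x)=0$; since $\gamma\in(0,1/L_f)$, Theorem~\ref{Th:PropFg}(\ref{prop:DerPen}) then immediately gives $\bar x\in X_\star$. This reduces the claim to a classical global convergence argument for an Armijo line search, whose two standard ingredients I would establish separately: (a) the directions $d^k$ are \emph{gradient related}, i.e. uniformly bounded and, where the gradient is bounded away from zero, uniformly of descent type; and (b) the Armijo rule forces the ``sufficient decrease'' quantity $\tau_k\nabla F_\gamma(x^k)'d^k$ to vanish.

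For ingredient (a), write $A_k=H^k+\delta_k I$. By Proposition~\ref{prop:PSDHess} every $H^k$ is symmetric positive semidefinite with largest eigenvalue at most $\xi_2$, so $A_k$ is symmetric positive definite with spectrum in $[\delta_k,\xi_2+\delta_k]$; in particular $\lambda_{\min}(A_k)\ge\delta_k=\zeta\|\nabla F_\gamma(x^k)\|$. Writing $A_kd^k=-\nabla F_\gamma(x^k)+r^k$ with $\|r^k\|\le\eta_k\|\nabla F_\gamma(x^k)\|$ from~\eqref{eq:InRegNewtSys}, the regularization yields the \emph{uniform} bound
\[
\|d^k\|\le\|A_k^{-1}\|\,(1+\eta_k)\|\nabla F_\gamma(x^k)\|\le\tfrac{1+\eta_k}{\zeta}\le\tfrac{2}{\zeta}.
\]
For descent I would exploit that CG is initialized at $0$, so $d^k$ minimizes $\tfrac12 d'A_kd+\nabla F_\gamma(x^k)'d$ over a Krylov subspace containing $0$; hence $\nabla F_\gamma(x^k)'d^k\le-\tfrac12(d^k)'A_kd^k\le-\tfrac12\lambda_{\min}(A_k)\|d^k\|^2$. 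Combining this with the lower bound $\|d^k\|\ge\|A_kd^k\|/\lambda_{\max}(A_k)\ge(1-\bar\eta)\|\nabla F_\gamma(x^k)\|/(\xi_2+\delta_k)$ gives, whenever $\|\nabla F_\gamma(x^k)\|$ stays in a compact subset of $(0,\infty)$, a uniform estimate $\nabla F_\gamma(x^k)'d^k\le-c\,\|\nabla F_\gamma(x^k)\|^2$ with $c>0$.

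For ingredient (b), I would first note that a finite $\tau_k>0$ exists (standard, since $F_\gamma\in\mathcal C^1$ and $d^k$ is a descent direction) and that $F_\gamma$ is bounded below by $F_\star$ (Theorem~\ref{Th:PropFg}(\ref{cor:Equivalence}), as $X_\star=\argmin F_\gamma\neq\emptyset$). The Armijo inequality~\eqref{eq:Armijo} makes $\{F_\gamma(x^k)\}$ nonincreasing and convergent, so $-\sigma\tau_k\nabla F_\gamma(x^k)'d^k\to0$. Then argue by contradiction along a subsequence $x^{k_j}\to\bar x$ with $\nabla F_\gamma(\bar x)\neq0$: continuity keeps $\|\nabla F_\gamma(x^{k_j})\|\in[m,M]$ for some $0<m\le M$, so $\delta_{k_j}$ lies in a compact subset of $(0,\infty)$, the estimates of (a) apply and give $|\nabla F_\gamma(x^{k_j})'d^{k_j}|\ge c\,m^2>0$, forcing $\tau_{k_j}\to0$. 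For large $j$ the previous trial step $2\tau_{k_j}$ (which is $\le1$) then violates~\eqref{eq:Armijo}; dividing by $2\tau_{k_j}$, applying the mean value theorem, and passing to a further subsequence along which the bounded directions $d^{k_j}$ converge to some $\bar d$ (using $\tau_{k_j}\|d^{k_j}\|\to0$ and continuity of $\nabla F_\gamma$) yields $\nabla F_\gamma(\bar x)'\bar d\ge\sigma\,\nabla F_\gamma(\bar x)'\bar d$, i.e. $\nabla F_\gamma(\bar x)'\bar d\ge0$ since $\sigma<1$. This contradicts the strict descent $\nabla F_\gamma(\bar x)'\bar d\le-c\|\nabla F_\gamma(\bar x)\|^2<0$ obtained by passing to the limit in (a); hence $\nabla F_\gamma(\bar x)=0$.

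The main obstacle I anticipate is the descent estimate in (a) in the degenerate case where $\hat\partial^2F_\gamma$ is only positive \emph{semi}definite (e.g. $\mu_f=0$, so $\xi_1=0$), where $A_k$ can become arbitrarily ill-conditioned as $\nabla F_\gamma(x^k)\to0$. Here the two design choices of Algorithm~\ref{al:PNM} are essential and must be used jointly: the regularization $\delta_k=\zeta\|\nabla F_\gamma(x^k)\|$ simultaneously lifts $\lambda_{\min}(A_k)$ away from $0$ proportionally to the gradient (bounding $\|d^k\|$) and, along a subsequence where the gradient is bounded away from $0$, keeps the condition number of $A_k$ uniformly bounded; and the CG-specific energy inequality $\nabla F_\gamma(x^k)'d^k\le-\tfrac12(d^k)'A_kd^k$ is what drives the descent bound. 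A generic inexact solve controlled only through $\|r^k\|\le\eta_k\|\nabla F_\gamma(x^k)\|$ and operator-norm estimates would require $\bar\eta$ to be small, whereas the Krylov/energy-minimization structure of CG delivers descent for \emph{any} $\bar\eta\in(0,1)$.
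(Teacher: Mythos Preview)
Your proposal is correct and follows essentially the same architecture as the paper: establish that $\{d^k\}$ is gradient related to $\{x^k\}$ (bounded, and uniformly descent along subsequences with gradient bounded away from zero), then invoke the standard Armijo convergence argument, and finally use Theorem~\ref{Th:PropFg}(\ref{prop:DerPen}) to identify stationary points of $F_\gamma$ with $X_\star$.

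The only notable technical difference is in the descent estimate. The paper appeals directly to a CG lemma of Dembo--Steihaug, which yields the clean bound
\[
\nabla F_\gamma(x^k)'d^k \le -\frac{1}{\|H^k+\delta_kI\|}\,\|\nabla F_\gamma(x^k)\|^2,
\]
after which bounding $\|H^k+\delta_kI\|\le\gamma^{-1}+\zeta\|\nabla F_\gamma(x^k)\|$ via Proposition~\ref{prop:PSDHess} suffices. Your route via the Krylov energy inequality $\nabla F_\gamma(x^k)'d^k\le-\tfrac12(d^k)'A_kd^k$ is equally valid but slightly more indirect, since you then need the auxiliary lower bound on $\|d^k\|$ to convert it into a bound in terms of $\|\nabla F_\gamma(x^k)\|$. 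Both exploit the specific structure of CG (not just the residual control~\eqref{eq:InRegNewtSys}), exactly as you correctly anticipate in your closing remarks. The paper also cites \cite[Prop.~1.2.1]{bertsekas1999nonlinear} rather than spelling out the Armijo contradiction argument, but your explicit version is the standard proof of that proposition.
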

\begin{proof}
We will first show that the sequence $\{d^ k\}$ is \emph{gradient related to $\{x^ k\}$} \cite[Sec. 1.2]{bertsekas1999nonlinear}. That is, for any subsequence 
$\{x^ k\}_{ k\in \mathcal N}$ that converges to a nonstationary point of $F_\gamma$, \ie
\begin{equation}\label{eq:gradRel0}
\lim_{ k\to\infty, k\in\NN} \|\nabla F_\gamma(x^ k)\|=\kappa\neq 0,
\end{equation}
the corresponding subsequence $\{d^ k\}_{ k\in\NN}$ is bounded and satisfies
\begin{equation}\label{eq:gradRel}
\limsup_{ k\to\infty, k\in\NN} \nabla F_\gamma(x^ k)'d^{ k}<0.
\end{equation}
Without loss of generality we can restrict to subsequences for which $\nabla F_\gamma(x^ k)\neq 0$, for all $ k\in \mathcal N$.
Suppose that $\{x^ k\}_{ k\in \mathcal N}$ is one such subsequence.
Due to~\eqref{eq:deltas}, we have $\delta_ k>0$ for all $ k\in \mathcal N$. 
Matrix $H^k$ is positive semidefinite due to Proposition~\ref{prop:PSDHess},
therefore $H^ k+\delta_ k I$ is nonsingular for all $k\in \mathcal N$ and
$$\|(H^ k+\delta_ k I)^{-1}\|\leq\delta_ k^{-1}=\frac{1}{\zeta\|\nabla F_\gamma(x^ k)\|}.$$
Now, direction $d^ k$ satisfies
\begin{equation*}
d^ k=(H^ k+\delta_ k I)^{-1}(r^ k-\nabla F_\gamma(x^ k)),
\end{equation*}
where $r^ k=(H^ k+\delta_ k I)d^ k+\nabla F_\gamma(x^ k)$.
Therefore
\begin{align}
\|d^ k\|&\leq\|(H^ k+\delta_ k I)^{-1}\|(\|r^ k\|+\|\nabla F_\gamma(x^ k)\|)\label{eq:boundd}\\
&\leq\frac{1}{\zeta\|\nabla F_\gamma(x^ k)\|}(\eta_ k\|\nabla F_\gamma(x^ k)\|+\|\nabla F_\gamma(x^ k)\|)\leq(1+\bar{\eta})/\zeta,\nonumber
\end{align}
proving that $\{d^ k\}_{ k\in\mathcal{N}}$ is bounded. 
According to \cite[Lemma A.2]{dembo1983truncated}, when CG is applied to~\eqref{eq:RegNewtSys} we have that 
\begin{equation}\label{eq:CGprop}
\nabla F_\gamma(x^ k)'d^ k\leq-\frac{1}{\|H^ k+\delta_ k I\|}\|\nabla F_\gamma(x^ k)\|^2.
\end{equation}
Using~\eqref{eq:deltas}
and  Proposition~\ref{prop:PSDHess}, we have that 
$$\|H^ k+\delta_ k I\|\leq \gamma^{-1}+\zeta\| \nabla F_\gamma(x^ k)\|,$$
therefore 
\begin{equation}\label{eq:SuffDec}
\nabla F_\gamma(x^ k)'d^ k\leq-\frac{\|\nabla F_\gamma(x^ k)\|^2}{ \gamma^{-1}+\zeta\|\nabla F_\gamma(x^ k)\|},\quad\forall  k\in\NN,
\end{equation}
As $ k(\in\NN)\to\infty$, the right hand side of~\eqref{eq:SuffDec} converges to $-\kappa^2/(\gamma^{-1}+\zeta\kappa)$, which is either a finite negative number (if $\kappa$ is finite) or $-\infty$. In any case, this together with~\eqref{eq:SuffDec} confirm that~\eqref{eq:gradRel} is valid as well, proving that $\{d^ k\}$ is gradient related to $\{x^ k\}$. All the assumptions of \cite[Prop.~1.2.1]{bertsekas1999nonlinear} hold, therefore every accumulation point of $\{x^ k\}$ converges to a stationary point of $F_\gamma$, which by Theorem~\ref{Th:PropFg}(\ref{cor:Equivalence}) is also a minimizer of $F$.
\iftoggle{svver}{\qed}{}
\end{proof}

The next theorem shows that under a nonsingularity assumption on $\hat{\partial}^2 F_\gamma(x_\star)$, the asymptotic rate of convergence of the sequence generated by Algorithm~\ref{al:PNM} is at least superlinear.

\begin{theorem}\label{eq:PNMconvRate}
Suppose that $x_\star$ is an accumulation point of the sequence $\{x^ k\}$ generated by Algorithm~\ref{al:PNM}. If $\prox_{\gamma g}$  is semismooth at $x_\star-\gamma\nabla f(x_\star)$  and every element of $\hat{\partial}^2 F_\gamma(x_\star)$ is nonsingular, then the entire sequence converges to $x_\star$ and the convergence rate is Q-superlinear. Furthermore, if $\prox_{\gamma g}$  is strongly semismooth at $x_\star-\gamma\nabla f(x_\star)$ and $\nabla^2 f$ is locally Lipschitz continuous around $x_\star$  then $\{x^k\}$ converges to $x_\star$ with Q-order at least $\rho$.
\end{theorem}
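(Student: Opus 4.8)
The plan is to establish local Q-superlinear (and then Q-quadratic-order $\rho$) convergence by combining three ingredients already in place: the fact that $\hat{\partial}^2 F_\gamma$ is a (strong) LNA for $\nabla F_\gamma$ at $x_\star$ (Proposition~\ref{prop:LNAprops1}), the error bound $\|x-x_\star\|\leq c\|G_\gamma(x)\|$ valid near $x_\star$ under the nonsingularity hypothesis (Lemma~\ref{lem:sharpMin}), and the standard local convergence theory for linear Newton methods, e.g.~\cite[Thm.~7.5.15]{facchinei2003finite}. The nonsingularity of every element of $\hat{\partial}^2F_\gamma(x_\star)$ together with outer semicontinuity of the set-valued map $\hat{\partial}^2 F_\gamma$ gives a neighborhood of $x_\star$ on which every selected $H^k$ is nonsingular with $\|(H^k)^{-1}\|$ uniformly bounded; since $\delta_k=\zeta\|\nabla F_\gamma(x^k)\|\to 0$ along the subsequence converging to $x_\star$ (because $\nabla F_\gamma(x_\star)=0$), the regularized matrix $H^k+\delta_k I$ is eventually nonsingular with uniformly bounded inverse as well.

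First I would argue that the entire sequence converges to $x_\star$, not just the subsequence. The key observation is that $x_\star$ is an \emph{isolated} stationary point: by Lemma~\ref{lem:sharpMin} it is the unique solution of~\eqref{eq:GenProb} in a ball of radius $\delta$, hence by Theorem~\ref{Th:PropFg}(\ref{cor:Equivalence}) isolated among stationary points of $F_\gamma$. Combined with the fact that $\{F_\gamma(x^k)\}$ is nonincreasing (by the Armijo condition~\eqref{eq:Armijo}) and that consecutive iterates satisfy $\|x^{k+1}-x^k\|=\tau_k\|d^k\|\to 0$ on the convergent subsequence, a standard capture argument forces the whole sequence into the basin of $x_\star$ and hence to converge to it.

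Next I would show the unit stepsize $\tau_k=1$ is eventually accepted, so that asymptotically the method is the pure (regularized) linear Newton iteration $x^{k+1}=x^k+d^k$ with $d^k$ an inexact solution of~\eqref{eq:RegNewtSys} satisfying the residual bound~\eqref{eq:InRegNewtSys}. This is where the LNA property does the work: writing the Newton step in terms of $H^k\in\hat{\partial}^2F_\gamma(x^k)$ and using the LNA estimate $\|\nabla F_\gamma(x^k)+H^k(x_\star-x^k)\|=o(\|x^k-x_\star\|)$, one bounds $\|x^{k+1}-x_\star\|$ by the sum of three vanishing contributions: the LNA residual, the inexactness term controlled by $\eta_k=\min\{\bar\eta,\|\nabla F_\gamma(x^k)\|^\rho\}$, and the regularization term $\delta_k\|(H^k+\delta_k I)^{-1}\|\|d^k\|$. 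Since $\eta_k\to 0$ and $\delta_k=\zeta\|\nabla F_\gamma(x^k)\|=O(\|x^k-x_\star\|)$ by Lipschitz continuity of $\nabla F_\gamma$, and dividing by $\|x^k-x_\star\|$ (using the error bound from Lemma~\ref{lem:sharpMin} to pass between $\|x^k-x_\star\|$ and $\|G_\gamma(x^k)\|\sim\|\nabla F_\gamma(x^k)\|$), each term is $o(1)$, giving $\|x^{k+1}-x_\star\|=o(\|x^k-x_\star\|)$, i.e. Q-superlinear convergence. For the strong version, the strong LNA gives an $O(\|x^k-x_\star\|^2)$ residual and $\eta_k=O(\|\nabla F_\gamma(x^k)\|^\rho)=O(\|x^k-x_\star\|^\rho)$, yielding Q-order at least $\rho$ (with $\rho\le 1$, and the quadratic term dominating when $\rho<1$, so the overall order is $1+\min\{1,\rho\}$-type; the stated conclusion is order at least $\rho$).

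The main obstacle I anticipate is verifying the acceptance of the unit step $\tau_k=1$ in the Armijo test~\eqref{eq:Armijo} \emph{without} assuming $\nabla F_\gamma$ is locally Lipschitz (which the paper is at pains to avoid, cf. the remark before Lemma~\ref{lem:sharpMin}). The usual proof that Newton steps satisfy a sufficient-decrease condition with $\sigma<1/2$ relies on a second-order Taylor expansion of the merit function; here $F_\gamma$ is only $\mathcal{C}^1$, so I would instead estimate $F_\gamma(x^k+d^k)-F_\gamma(x^k)$ via the integral $\int_0^1\nabla F_\gamma(x^k+td^k)'d^k\,dt$ and control the integrand using the LNA property of $\nabla F_\gamma$ relative to $x_\star$ and the uniform bounds $\xi_1,\xi_2$ on the spectrum of elements of $\hat{\partial}^2F_\gamma$ from Proposition~\ref{prop:PSDHess}. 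Relating $\nabla F_\gamma(x^k)'d^k$ to $-\|\nabla F_\gamma(x^k)\|^2$ (up to the conditioning of $H^k+\delta_k I$, as in~\eqref{eq:SuffDec}) and showing the higher-order remainder is dominated by the slack $(1/2-\sigma)$ in the Armijo constant is the delicate computation; it is essentially the inexact, regularized, $\mathcal{C}^1$ analogue of the classical transition-to-fast-convergence lemma, and I would model it on~\cite[Thm.~7.5.15]{facchinei2003finite}.
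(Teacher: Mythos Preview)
Your overall strategy matches the paper's: use the LNA property (Proposition~\ref{prop:LNAprops1}), the nonsingularity hypothesis together with outer semicontinuity to get uniformly bounded inverses of $H^k$, decompose the Newton error into LNA residual, regularization, and inexactness terms, and finally verify that the unit step is eventually accepted so the pure (inexact, regularized) Newton iteration governs the tail. The rate bookkeeping you sketch is exactly what the paper does in~\eqref{eq:convRateEq}--\eqref{eq:Qquad}.

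Two places where the paper's execution is cleaner than your outline. First, the whole-sequence convergence: you invoke an isolated-stationary-point capture argument, but this is unnecessary. Lemma~\ref{lem:sharpMin} gives that $x_\star$ is the \emph{globally} unique minimizer of $F$, so $X_\star=\{x_\star\}$; Theorem~\ref{th:ConvAlg1} then forces every accumulation point to be $x_\star$, and convergence of the full sequence follows immediately. No capture argument or control of $\|x^{k+1}-x^k\|$ is needed.

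Second, for the eventual acceptance of $\tau_k=1$, the paper avoids the integral-remainder estimate you anticipate and instead proceeds in two steps: (i) it first shows $\nabla F_\gamma(x^k)'d^k\le -\theta\|d^k\|^2$ for some $\theta>0$, using~\eqref{eq:InRegNewtSys}, the bound $\|\nabla F_\gamma(x^k)\|\le 2\gamma^{-1}\|x^k-x_\star\|$, and the fact that superlinear convergence of $d^k$ relative to $x^k$ gives $\|x^k-x_\star\|=O(\|d^k\|)$; (ii) it then invokes a second-order-type expansion of $F_\gamma$ at $x_\star$ in terms of the LNA (the analogue of~\cite[Prop.~7.4.10]{facchinei2003finite}, valid for any LNA without semismoothness of $\nabla F_\gamma$), namely
\[
\lim_{\substack{\|d\|\to 0\\ H\in\hat{\partial}^2 F_\gamma(x_\star+d)}}\frac{F_\gamma(x_\star+d)-F_\gamma(x_\star)-\nabla F_\gamma(x_\star)'d-\tfrac{1}{2}d'Hd}{\|d\|^2}=0,
\]
and then cites~\cite[Prop.~8.3.18(d)]{facchinei2003finite} to conclude the Armijo test~\eqref{eq:Armijo} holds with $\tau_k=1$ for $\sigma<1/2$. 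This is the ``$\mathcal{C}^1$ analogue of the transition-to-fast-convergence lemma'' you were looking for, and it sidesteps any Lipschitz assumption on $\nabla F_\gamma$.
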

\begin{proof}
Theorem~\ref{th:ConvAlg1} asserts that $x_\star$ must be a stationary point for  $F_\gamma$. Due to Proposition~\ref{prop:LNAprops1}, $\hat{\partial}^2 F_\gamma$  is a LNA of $\nabla F_\gamma$ at $x_\star$.
Due to Lemma~\ref{lem:sharpMin}, $x_\star$ is the globally unique minimizer of $F$. Therefore, by Theorem~\ref{th:ConvAlg1} every subsequence must converge to this unique accumulation point, implying that the entire sequence converges to $x_\star$.
Furthermore, for any $ k$
\begin{align}
\|\nabla F_\gamma(x^ k)\|&\leq \|I-\gamma\nabla^2 f(x^ k)\|\|G_\gamma(x^ k)\|\nonumber\\
&\leq \|G_\gamma(x^ k)-G_\gamma(x_\star)\|\leq 2\gamma^{-1}\|x^ k-x_\star\|,\label{eq:Calmness}
\end{align}
where the second inequality follows from $G_\gamma(x_\star)=0$ and Lemma~\ref{le:zNonExp} (in the Appendix).

 We know that $d^ k$ satisfies $(H^ k+\delta_ k I) d^ k+\nabla  F_\gamma(x^ k)=r^ k$. Therefore, for sufficiently large $ k$, we have
\small
\begin{align}
\|x^ k+d^ k-x_\star\|&=\|x^ k+(H^ k+\delta_ k I)^{-1}(r^ k-\nabla F_\gamma(x^ k))-x_\star\|\nonumber\\
& =\|(H^ k+\delta_ k I)^{-1}(H^ k(x^ k-x_\star)-\nabla F_\gamma(x^ k)+ \delta_ k(x^ k-x_\star)+r^ k)\|\nonumber\\
& \leq \|(H^ k+\delta_ k I)^{-1}\|\left(\|H^ k(x^ k-x_\star)+\nabla F_\gamma(x_\star)-\nabla F_\gamma(x^ k)\|\right.\nonumber\\
& \phantom{\leq \|(H^ k+\delta_ k I)^{-1}\|\left(\right.} \left.+~\delta_ k\|x^ k-x_\star\|+\|r^ k\|\right)\nonumber\\
& \leq\kappa\left(\|H^ k(x^ k-x_\star)+\nabla F_\gamma(x_\star)-\nabla F_\gamma(x^ k)\|\right.\nonumber\\
& \phantom{\leq\kappa\left(\right.}\left.+~2\zeta\gamma^{-1}\|x^ k-x_\star\|^2+\eta\gamma^{-1}\|x-x_\star\|^{1+\rho}\right)\label{eq:convRateEq}
\end{align}
\normalsize
where the last inequality follows by~\eqref{eq:deltas},~\eqref{eq:etas},~\eqref{eq:Calmness}. 
Therefore, since $\hat{\partial}^2 F_\gamma$ is a LNA of $\nabla F_\gamma$ at $x_\star$, we have
\begin{equation}\label{eq:Qsup}
\|x^ k+d^ k-x_\star\|=o(\|x^ k-x_\star\|),
\end{equation}
 while if it is a strong LNA we have 
\begin{equation}\label{eq:Qquad}
\|x^ k+d^ k-x_\star\|=O(\|x^ k-x_\star\|^{1+\rho}).
\end{equation}
In other words, $\{d^ k\}$ is \emph{superlinearly convergent with respect to} $\{x^ k\}$~\cite[Sec. 7.5]{facchinei2003finite}. 
Eventually, we have
\begin{align}
\nabla F_\gamma(x^k)'d^k+{d^k}'(H^k+\delta_kI)d^k&\leq\eta_k\|\nabla F_\gamma(x^k)\|\|d^k\|\leq\|\nabla F_\gamma(x^k)\|^{\rho+1}\|d^k\|\nonumber\\
&\leq2\gamma^{-(\rho+1)}\|x^k-x_\star\|^{\rho+1}\|d^k\|\nonumber\\
&=O(\|d^k\|^{\rho+2}),\label{eq:unitStepBasic}
\end{align}
where the first inequality follows by~\eqref{eq:InRegNewtSys}, the second by~\eqref{eq:etas}, the third inequality follows by~\eqref{eq:Calmness} and the equality follows from the fact that $\{d^ k\}$ is superlinearly convergent with respect to $\{x^ k\}$, which implies $\|x^k-x_\star\|=O(\|d^k\|)$~\cite[Lem.~7.5.7]{facchinei2003finite}.

Since $\hat{\partial}^2 F_\gamma$ is a LNA of $\nabla F_\gamma$ at $x_\star$, it has nonempty compact images and is upper semicontinuous at $x_\star$. This, together with the fact that $\{x^ k\}$ converges to $x_\star$ and the nonsingularity assumption on the elements of $\hat{\partial}^2 F_\gamma(x_\star)$ imply through \cite[Lem.~7.5.2]{facchinei2003finite} that for sufficiently large $ k$, $H^ k$ is nonsingular and there exists a $\kappa>0$ such that
$$\max\{\|H^ k\|,\|H^ k\|^{-1}\}\leq\kappa.$$
Therefore, eventually we have $\lambda_{\min}(H^ k+\delta_ k I)\geq\lambda_{\min}(H^ k)\geq\kappa$.
The last inequality together with~\eqref{eq:unitStepBasic} imply that there exists a $\theta>0$ such that eventually
\begin{equation}\label{eq:unitStep}
\nabla F_\gamma(x^k)'d^k\leq -\theta \|d^k\|^2.
\end{equation}
Following the same line of proof as in~\cite[Prop.~7.4.10]{facchinei2003finite},  it can be shown that
\begin{equation}\label{le:2ndOrd}
\lim_{\stackrel{\|d\|\to 0}{H\in\hat{\partial}^2 F_\gamma(x_\star+d)}}\frac{F_{\gamma}(x_\star+d)-F_{\gamma}(x_\star)-\nabla F_\gamma(x_\star)'d-\tfrac{1}{2}d'Hd}{\|d\|^2}=0.
\end{equation}
We remark here that~\cite[Prop.~7.4.10]{facchinei2003finite} assumes semismoothness of $\nabla F_\gamma$ at $x_\star$ and proves~\eqref{le:2ndOrd} with $\partial_C(\nabla F_\gamma)$ in place of $\hat{\partial}^2 F_\gamma$, but exactly the same arguments apply for any LNA of $\nabla F_\gamma$ at $x_\star$ even without the semismoothness assumption.

Using~\eqref{eq:unitStep},~\eqref{le:2ndOrd} and exactly the same arguments as in the proof of~\cite[Prop.~8.3.18(d)]{facchinei2003finite} or~\cite[Th. 3.2]{facchinei1995minimization} we have that eventually
\begin{equation}
F_\gamma(x^ k+d^ k)\leq F_\gamma(x^ k)+\sigma\nabla F_\gamma(x^ k)'d^ k,
\end{equation}
which means that there exists a positive integer $\bar{k}$ such that $\tau_k=1$, for all $k\geq \bar{k}$. Therefore, for all  $k\geq \bar{k}$
$$x^{k+1}=x^k+d^k.$$
This together with~\eqref{eq:Qsup},~\eqref{eq:Qquad} proves the corresponding convergence rates for $\{x^k\}$.
\iftoggle{svver}{\qed}{}
\end{proof}

When $f$ is strongly convex quadratic, Theorem~\ref{th:ProxPropQuad} guarantees that $F_\gamma$ is strongly convex and we can give a complexity estimate for Algorithm~\ref{al:PNM}. In particular, the global convergence rate for the function values and the iterates is linear. 
\begin{theorem}\label{th:ComplPNM}
Suppose that $f$ is quadratic and $\mu_f>0$. If $\zeta=0$ then 
\begin{subequations}
\begin{align}
F(P_\gamma(x^k))-F_\star\leq r_{F_\gamma}(F_\gamma(x^0)-F_\star)),\label{eq:QuadRateF}\\
\|x^{k}-x_\star\|^2\leq \frac{L_{F_\gamma}}{\mu_{F_\gamma}}r_{F_\gamma}^k\|x^0-x_\star\|^2\label{eq:QuadRatex}
\end{align}
\end{subequations}
where $r_{F_\gamma}= 1-2\left(\frac{\mu_{F_\gamma}}{L_{F_\gamma}}\right)^3\frac{\sigma(1-\sigma)}{1+\eta}$.
\end{theorem}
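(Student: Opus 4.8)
The plan is to establish a linear (geometric) contraction of the FBE values $F_\gamma(x^k)-F_\star$ and then transfer it to both $F(P_\gamma(x^k))$ and the iterates $x^k$. Since $f$ is strongly convex quadratic with $\mu_f>0$, Theorem~\ref{th:ProxPropQuad} guarantees that $F_\gamma$ is $\mu_{F_\gamma}$-strongly convex with $L_{F_\gamma}$-Lipschitz continuous gradient; hence $F_\gamma$ has a unique minimizer $x_\star$ and $\inf F_\gamma=\inf F=F_\star$ by Theorem~\ref{Th:PropFg}. The choice $\zeta=0$ makes $\delta_k=0$ in~\eqref{eq:deltas}, so $d^k$ is a genuine inexact Newton direction, $\|H^kd^k+\nabla F_\gamma(x^k)\|\le\eta_k\|\nabla F_\gamma(x^k)\|$, for $H^k\in\hat\partial^2 F_\gamma(x^k)$. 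Moreover, in the quadratic case Proposition~\ref{prop:PSDHess} gives $\xi_1=\mu_{F_\gamma}$, so every $H^k$ is positive definite with eigenvalues bounded below by $\mu_{F_\gamma}$ and above by $\xi_2\le L_{F_\gamma}$. This uniform conditioning of the generalized Hessians is what drives the whole argument; when $\nabla F_\gamma(x^k)=0$ we already have $x^k=x_\star$, so we may assume $\nabla F_\gamma(x^k)\ne 0$ throughout.

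The heart of the proof is a single-step decrease estimate. First I would bound the direction: writing $H^kd^k=r^k-\nabla F_\gamma(x^k)$ with $\|r^k\|\le\eta_k\|\nabla F_\gamma(x^k)\|$ and inverting the positive definite $H^k$ yields $\|d^k\|\le\tfrac{1+\eta}{\mu_{F_\gamma}}\|\nabla F_\gamma(x^k)\|$, exactly as in~\eqref{eq:boundd} but now with $\delta_k=0$ and $\eta_k\le\eta$. Next, the conjugate-gradient descent property~\eqref{eq:CGprop} (again with $\delta_k=0$) together with $\|H^k\|\le L_{F_\gamma}$ gives $\nabla F_\gamma(x^k)'d^k\le-\tfrac{1}{L_{F_\gamma}}\|\nabla F_\gamma(x^k)\|^2$. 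I would then invoke the descent lemma for the $L_{F_\gamma}$-Lipschitz gradient to show that the Armijo test~\eqref{eq:Armijo} is passed for every $\tau\le 2(1-\sigma)|\nabla F_\gamma(x^k)'d^k|/(L_{F_\gamma}\|d^k\|^2)$; since backtracking halves the trial step, the accepted $\tau_k$ is at least half of this threshold (or equals $1$, giving even more decrease), whence~\eqref{eq:Armijo} produces a decrease bounded below by a constant multiple of $|\nabla F_\gamma(x^k)'d^k|^2/\|d^k\|^2$. Combining the descent bound with the bound on $\|d^k\|$ collapses this ratio to a multiple of $\|\nabla F_\gamma(x^k)\|^2$.

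It remains to convert the gradient-norm decrease into a value contraction. Strong convexity of $F_\gamma$ supplies the Polyak--{\L}ojasiewicz inequality $\|\nabla F_\gamma(x^k)\|^2\ge 2\mu_{F_\gamma}(F_\gamma(x^k)-F_\star)$, and substituting it into the single-step estimate gives $F_\gamma(x^{k+1})-F_\star\le r_{F_\gamma}(F_\gamma(x^k)-F_\star)$; iterating yields $F_\gamma(x^k)-F_\star\le r_{F_\gamma}^k(F_\gamma(x^0)-F_\star)$. One checks $r_{F_\gamma}\in(0,1)$ since $\mu_{F_\gamma}\le L_{F_\gamma}$, $\sigma\in(0,1/2)$ and $\eta\in(0,1)$. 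The two displayed bounds then follow immediately: \eqref{eq:QuadRateF} from $F(P_\gamma(x^k))\le F_\gamma(x^k)$, which is Theorem~\ref{Th:PropFg}(\ref{prop:LowBnd}), namely~\eqref{eq:LowBnd4Gamma} (valid since $\gamma\in(0,1/L_f)\subset(0,1/L_f]$); and \eqref{eq:QuadRatex} by the sandwich $\tfrac{\mu_{F_\gamma}}{2}\|x^k-x_\star\|^2\le F_\gamma(x^k)-F_\star\le r_{F_\gamma}^k(F_\gamma(x^0)-F_\star)\le r_{F_\gamma}^k\tfrac{L_{F_\gamma}}{2}\|x^0-x_\star\|^2$, using strong convexity on the left and the Lipschitz-gradient upper bound on the initial gap on the right.

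The main obstacle I anticipate is pinning down the exact contraction constant $r_{F_\gamma}$: the step-size lower bound from backtracking, the inexact-CG descent inequality, and the bound on $\|d^k\|$ each contribute a factor, and assembling them with the correct powers is delicate. The cube $(\mu_{F_\gamma}/L_{F_\gamma})^3$ arises from the $1/L_{F_\gamma}$ in the step-size threshold, the factor $(\mu_{F_\gamma}/L_{F_\gamma})^2$ coming out of the ratio $|\nabla F_\gamma(x^k)'d^k|^2/\|d^k\|^2$, and one further $\mu_{F_\gamma}$ from the Polyak--{\L}ojasiewicz inequality. Everything hinges on $\zeta=0$: otherwise the regularization $\delta_k=\zeta\|\nabla F_\gamma(x^k)\|$ would perturb both the descent estimate through $\|H^k+\delta_kI\|$ and the direction bound, and would spoil the clean geometric contraction.
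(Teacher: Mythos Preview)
Your proposal is correct and follows essentially the same route as the paper's proof: bound $\nabla F_\gamma(x^k)'d^k$ via the CG descent property~\eqref{eq:CGprop}, bound $\|d^k\|$ via~\eqref{eq:boundd}, use the descent lemma (available since $\nabla F_\gamma$ is $L_{F_\gamma}$-Lipschitz by Theorem~\ref{th:ProxPropQuad}) to lower-bound the Armijo stepsize, combine to get a per-step decrease proportional to $\|\nabla F_\gamma(x^k)\|^2$, invoke the PL inequality from strong convexity to obtain the contraction on $F_\gamma(x^k)-F_\star$, and finish with Theorem~\ref{Th:PropFg}(\ref{prop:LowBnd}) and the strong-convexity/Lipschitz sandwich. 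The only small slack is that you bound $\|H^k\|\le L_{F_\gamma}$, whereas Proposition~\ref{prop:PSDHess} gives the sharper $\|H^k\|\le\xi_2=\gamma^{-1}(1-\gamma\mu_f)=L_{F_\gamma}/2$; the paper uses $c_1=1/\xi_2$ rather than $1/L_{F_\gamma}$, which only affects the exact numerical constant in $r_{F_\gamma}$, not the argument.
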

\begin{proof}
See Appendix.
\end{proof}

Algorithm~\ref{al:PNM} exhibits fast asymptotic convergence rates provided that the elements of $\hat{\partial}^2 F_\gamma(x_\star)$ are nonsingular,
but not much can be said about its global convergence rate, unless $f$ is convex quadratic. Even in this favorable case the corresponding complexity
estimates are very loose due to the variable metric used by the algorithm, cf. Theorem~\ref{th:ComplPNM}.

Another reason for the failure to derive meaningful complexity estimates is the fact that Algorithm~\ref{al:PNM} ``forgets'' about the convex
structure of $F$, since it tries to minimize directly $F_\gamma$ which can be nonconvex and its gradient may not be globally Lipschitz continuous.
Specifically, Algorithm~~\ref{al:PNM} may fail to be a descent method for $F$ (although it satisfies that property for $F_\gamma$). Furthermore the
iterates $x^ k$ produced by Algorithm~\ref{al:PNM} may lie outside $\dom g$ (but $P_\gamma(x^ k)\in\dom g$, see Theorem~\ref{Th:PropFg}(\ref{prop:LowBnd})).
In this section, we show how Algorithm~\ref{al:PNM} can be modified so as to be able to derive global complexity estimates, similar to the ones for the
proximal gradient method, and at the same time retain fast asymptotic convergence rates. The key idea is to inject a forward-backward step after the
Newton step (cf. Alg.~\ref{al:PGNM}) and analyze the consequences of this choice on $F$, directly. This guarantees that the sequence of function values
for both $F$ and $F_\gamma$ are monotone nonincreasing.

\begin{algorithm}
\LinesNumbered
\DontPrintSemicolon
\caption{Forward-Backward Newton-CG Method II (FBN-CG II)} \label{al:PGNM}
\KwIn{$\gamma\in (0,1/L_{f})$, $\sigma\in \left(0,{1}/{2}\right)$, $\mathcal{K}\subseteq\Nn$, $ k=0$,  $s_{0}=0$, $x^{0}\in\dom g$}
\uIf{$ k\in\mathcal{K}$ or $s_{ k}=1$}{
Execute steps 1 and 2 of Algorithm~\ref{al:PNM} to compute direction $d^k$ and step $\tau_k$\;
$\hat{x}^ k\gets {x}^{ k}+\tau_ k d^ k$\;
\lIf{$\tau_ k=1$}{$s_{ k+1}\gets 1$} \lElse{$s_{ k+1}\gets 0$}
}
\Else{
$\hat{x}^{ k}\gets x^{ k}$, $s_{ k+1}\gets 0$
}
$x^{ k+1}\gets \prox_{\gamma g}(\hat{x}^ k-\gamma\nabla f(\hat{x}^ k))$\;
$ k\leftarrow k+1$ and go to Step 1.\;
\end{algorithm}

We show below that the sequence of iterates $\{x^ k\}_{k\in\Nn}$ produced by Algorithm~\ref{al:PGNM} enjoys the same favorable properties in terms of convergence and local convergence rates, as the one of Algorithm~\ref{al:PNM}.

\begin{theorem}
Every accumulation point of the sequence $\{x^ k\}$ generated by Algorithm~\ref{al:PGNM} belongs to $X_\star$.
\end{theorem}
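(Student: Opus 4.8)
The plan is to mirror the structure of the proof of Theorem~\ref{th:ConvAlg1}, but now track the effect of the iterations on $F$ directly rather than on $F_\gamma$, exploiting the forward-backward step injected at the end of each iteration of Algorithm~\ref{al:PGNM}. First I would establish that the sequence $\{F(x^k)\}$ is monotone nonincreasing. For this, observe that every iterate satisfies $x^{k+1}=P_\gamma(\hat{x}^k)$, so by Theorem~\ref{Th:PropFg}\eqref{prop:LowBnd} (specifically~\eqref{eq:LowBnd4Gamma}, valid since $\gamma\in(0,1/L_f)$) we have $F(x^{k+1})=F(P_\gamma(\hat{x}^k))\leq F_\gamma(\hat{x}^k)$. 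It then remains to bound $F_\gamma(\hat{x}^k)$ in terms of $F(x^k)$. In the branch where no Newton step is taken, $\hat{x}^k=x^k$, and~\eqref{eq:UppBnd} gives $F_\gamma(x^k)\leq F(x^k)$, yielding the sufficient decrease $F(x^{k+1})\leq F(x^k)-\tfrac{\gamma}{2}\|G_\gamma(x^k)\|^2$. In the branch where a Newton step is taken, $\hat{x}^k=x^k+\tau_kd^k$ with $\tau_k$ chosen by the Armijo rule~\eqref{eq:Armijo}, so $F_\gamma(\hat{x}^k)\leq F_\gamma(x^k)\leq F(x^k)$, again using~\eqref{eq:UppBnd}; hence $F(x^{k+1})\leq F(x^k)$ in either case.

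Next I would extract a vanishing-residual consequence of this monotonicity. Since $F$ is bounded below by $F_\star$ and $\{F(x^k)\}$ is nonincreasing, the sequence converges, so consecutive differences tend to zero. Combining the two decrease inequalities above, in both branches one obtains a bound of the form $F(x^{k+1})\leq F(x^k)-c\|G_\gamma(\hat{x}^k)\|^2$ for some constant $c>0$ depending on $\gamma$ (for the pure forward-backward branch $\hat{x}^k=x^k$ directly; for the Newton branch one uses the line-search decrease on $F_\gamma$ together with the envelope inequalities to produce a comparable bound). Summing telescopically forces $\|G_\gamma(\hat{x}^k)\|\to 0$ along the whole sequence, and since $\|\hat x^k - x^{k}\|$ and $\|x^{k+1}-\hat x^k\| = \gamma\|G_\gamma(\hat x^k)\|$ are controlled, the residual $G_\gamma(x^k)$ vanishes as well.

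Finally I would pass to accumulation points. Let $x^\star$ be an accumulation point of $\{x^k\}$, say $x^{k}\to x^\star$ along a subsequence $k\in\NN$. By continuity of $\nabla f$, of $\prox_{\gamma g}$, and hence of the residual map $G_\gamma$, the limit $\|G_\gamma(x^k)\|\to 0$ passes to $G_\gamma(x^\star)=0$. But $G_\gamma(x^\star)=0$ is exactly the fixed-point optimality condition~\eqref{eq:OptCond}, equivalently $\nabla F_\gamma(x^\star)=(I-\gamma\nabla^2f(x^\star))G_\gamma(x^\star)=0$ by~\eqref{eq:DerPen}; by Theorem~\ref{Th:PropFg}\eqref{prop:DerPen} and~\eqref{cor:Equivalence}, with $\gamma\in(0,1/L_f)$, this means $x^\star\in X_\star$.

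The main obstacle I anticipate is handling the two-branch structure uniformly: in the Newton branch the Armijo decrease is stated for $F_\gamma$ along the direction $d^k$, not directly for $G_\gamma$, so I must convert that decrease into a quantitative bound on $\|G_\gamma(\hat{x}^k)\|$ (or argue that the subsequent forward-backward step provides the needed residual decrease regardless of which branch produced $\hat{x}^k$). The cleanest route is to lean on the forward-backward step itself: whatever $\hat{x}^k$ is, the inequality $F(x^{k+1})\leq F_\gamma(\hat{x}^k)\leq F(x^k)$ chained with~\eqref{eq:LowBnd4Gamma} and~\eqref{eq:UppBnd} gives a decrease governed by $\|G_\gamma(\hat{x}^k)\|$, and one then relates $G_\gamma(\hat x^k)$ back to the accumulation behavior of $\{x^k\}$. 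Care is needed to ensure the argument closes along the full sequence and not merely along the forward-backward-only iterations, which is why I track $\|\hat x^k - x^k\|\to 0$ explicitly.
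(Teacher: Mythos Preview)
Your overall strategy is sound, but you take an unnecessary detour and introduce a gap in doing so. The paper's proof is more direct: in the Newton branch, rather than weakening~\eqref{eq:UppBnd} to $F_\gamma(x^k)\leq F(x^k)$, use its full strength $F_\gamma(x^k)\leq F(x^k)-\tfrac{\gamma}{2}\|G_\gamma(x^k)\|^2$. Chaining this with $F(x^{k+1})=F(P_\gamma(\hat{x}^k))\leq F_\gamma(\hat{x}^k)\leq F_\gamma(x^k)$ (the first inequality from~\eqref{eq:LowBnd4Gamma}, the second from the Armijo rule) yields exactly the same sufficient-decrease inequality $F(x^{k+1})\leq F(x^k)-\tfrac{\gamma}{2}\|G_\gamma(x^k)\|^2$ as in the forward-backward branch. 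Thus both branches give a uniform decrease governed by $\|G_\gamma(x^k)\|$, not $\|G_\gamma(\hat{x}^k)\|$, and the conclusion $\|G_\gamma(x^k)\|\to 0$ follows immediately from telescoping, with no need to relate $\hat{x}^k$ and $x^k$ at all.

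Your route instead produces a decrease in terms of $\|G_\gamma(\hat{x}^k)\|$ and then tries to transfer this back to $G_\gamma(x^k)$ via the claim that $\|\hat{x}^k-x^k\|$ is ``controlled.'' This is the gap: in the Newton branch $\hat{x}^k-x^k=\tau_kd^k$, and while $\|d^k\|\leq(1+\bar{\eta})/\zeta$ is bounded (cf.\ the proof of Theorem~\ref{th:ConvAlg1}), nothing in your argument forces $\tau_k\|d^k\|\to 0$ without already knowing $\nabla F_\gamma(x^k)\to 0$ together with some uniform lower bound on the eigenvalues of $H^k$---which is not assumed here. Your approach can still be salvaged without this claim: since $\|x^{k+1}-\hat{x}^k\|=\gamma\|G_\gamma(\hat{x}^k)\|\to 0$, the sequences $\{x^k\}$ and $\{\hat{x}^k\}$ share the same accumulation points, and one may apply the continuity argument to $\{\hat{x}^k\}$ directly. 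But the paper's route, exploiting the full~\eqref{eq:UppBnd}, sidesteps all of this.
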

\begin{proof}
If $\mathcal{K}=\emptyset$ then Algorithm~\ref{al:PGNM} is equivalent to FBS and the result has been already proved in \cite[Th. 1.2]{beck2010gradient}. Let us then assume
$\mathcal{K}\neq\emptyset$ and distinguish between two cases. First, we deal with the case where $ k\notin\mathcal{K}$ and $s_{ k}=0$. 
Putting $x=\bar{x}=x^{ k}$ in~\eqref{eq:ProxBasic} we obtain
\begin{equation}\label{eq:DesPGNM}
F(x^{ k+1})-F(x^{ k})\leq -\tfrac{\gamma}{2}\|G_{\gamma}(x^{ k})\|^2.
\end{equation}
For the case where $ k\in\mathcal{K}$ or $s_{ k}=1$, unless $\nabla F_{\gamma}(x^ k)=0$ (which means that  $x^ k$ is a minimizer of $F$), we have $F_{\gamma}(\hat{x}^ k)< F_{\gamma}(x^ k)$ due to~\eqref{eq:Armijo}. Using parts \eqref{prop:UppBnd} and~\eqref{prop:LowBnd} of Theorem~\ref{Th:PropFg} we obtain
\begin{align*}
F(x^{ k+1})&=F(P_\gamma(\hat{x}^ k))\leq F_\gamma(\hat{x}^ k)\\
&\leq F_\gamma(x^ k)\leq F(x^ k)-\tfrac{\gamma}{2}\|G_{\gamma}(x^ k)\|^2
\end{align*}
and again we arrive at \eqref{eq:DesPGNM}.
 
Summing up, Eq.~\eqref{eq:DesPGNM} is satisfied for every $k\in\Nn$.
Since $\{F(x^k)\}$ is monotonically nonincreasing, it converges to a finite value
(since we have assumed that $F$ is proper),
therefore $\{F(x^k)-F(x^{k+1})\}$
converges to zero. This implies through~\eqref{eq:DesPGNM} that
$\{\|G_{\gamma}(x^{ k})\|^2\}$ converges to zero. Since
$\|G_{\gamma}({\cdot})\|^2$ is a continuous nonnegative function which becomes
zero if and only if $x\in X_\star$, it follows that
every accumulation point of $\{x^{ k}\}$ belongs to $X_\star$.
\iftoggle{svver}{\qed}{}
\end{proof}

\begin{theorem}\label{eq:PGNMconvRate}
Suppose $\mathcal{K}$ is infinite. Under the assumptions of Theorem~\ref{eq:PNMconvRate} the same results
apply also to the sequence of iterates produced by Algorithm~\ref{al:PGNM}.
\end{theorem}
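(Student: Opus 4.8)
The plan is to argue that, under the hypotheses of Theorem~\ref{eq:PNMconvRate}, Algorithm~\ref{al:PGNM} eventually performs an uninterrupted run of unit-step Newton iterations, each trailed by a forward-backward step that is harmless for the asymptotic rate. First I would settle global convergence of the whole sequence exactly as in Theorem~\ref{eq:PNMconvRate}: the (unlabelled) theorem immediately preceding this one shows that every accumulation point of $\{x^k\}$ lies in $X_\star$, while the nonsingularity assumption together with Lemma~\ref{lem:sharpMin} forces $X_\star=\{x_\star\}$. Hence the accumulation point $x_\star$ postulated in the hypotheses is unique, and the entire sequence $\{x^k\}$ converges to it.

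Next I would observe that the conclusion ``$\tau_k=1$ for all large $k$'' obtained in the proof of Theorem~\ref{eq:PNMconvRate} is \emph{purely local}: the chain~\eqref{eq:unitStep}--\eqref{le:2ndOrd} that forces acceptance of the unit step in~\eqref{eq:Armijo} depends only on $x^k$ lying in a neighbourhood of $x_\star$ and on $d^k$ being the inexact regularized Newton direction~\eqref{eq:InRegNewtSys}, not on the way $x^k$ was produced. Thus there is a radius $\delta>0$ such that whenever Steps~1--2 of Algorithm~\ref{al:PNM} are executed at a point within $\delta$ of $x_\star$, the backtracking accepts $\tau_k=1$. Since $x^k\to x_\star$ and $\mathcal{K}$ is infinite, I can pick $k_0\in\mathcal{K}$ with $\|x^k-x_\star\|<\delta$ for all $k\ge k_0$. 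At $k_0$ a Newton step is taken with $\tau_{k_0}=1$, so $s_{k_0+1}=1$; this triggers a Newton step at $k_0+1$, which again satisfies $\tau_{k_0+1}=1$ and sets $s_{k_0+2}=1$. By induction $s_k=1$ and $\hat{x}^k=x^k+d^k$ for every $k\ge k_0$, so the algorithm never reverts to a plain forward-backward update past $k_0$.

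Finally I would transfer the rate across the trailing forward-backward step. For $k\ge k_0$ the iterate is $x^{k+1}=P_\gamma(\hat{x}^k)$ with $\hat{x}^k=x^k+d^k$. Because $\prox_{\gamma g}$ is nonexpansive and $\|I-\gamma\nabla^2 f(x)\|\le 1-\gamma\mu_f\le 1$ for $\gamma\in(0,1/L_f)$, the map $P_\gamma$ is nonexpansive; combined with the fixed-point identity $x_\star=P_\gamma(x_\star)$ this gives $\|x^{k+1}-x_\star\|\le\|x^k+d^k-x_\star\|$. The estimate~\eqref{eq:convRateEq} bounds $\|x^k+d^k-x_\star\|$ in terms of $x^k$, $d^k$ and the LNA property alone, so it applies verbatim to the iterates of Algorithm~\ref{al:PGNM}, yielding~\eqref{eq:Qsup} in the semismooth case and~\eqref{eq:Qquad} in the strongly semismooth case. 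Chaining the two inequalities gives $\|x^{k+1}-x_\star\|=o(\|x^k-x_\star\|)$ and $\|x^{k+1}-x_\star\|=O(\|x^k-x_\star\|^{1+\rho})$ respectively, which are precisely the Q-superlinear and Q-order rates asserted for Algorithm~\ref{al:PNM}.

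The delicate point is the induction in the second paragraph: one must be sure that a single unit-step Newton iteration landing in the basin $\{\,\|x-x_\star\|<\delta\,\}$ suffices to ignite an uninterrupted Newton phase through the flag $s_k$, and that the interleaved forward-backward step can neither push the iterate back outside the basin nor reset the flag. Both are guaranteed by nonexpansiveness of $P_\gamma$, which keeps $\|x^{k+1}-x_\star\|\le\|\hat{x}^k-x_\star\|<\delta$ and never produces $\tau_k<1$ once $k\ge k_0$; everything else is a direct reuse of the local analysis of Theorem~\ref{eq:PNMconvRate}.
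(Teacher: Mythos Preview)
Your proposal is correct and follows essentially the same route as the paper's proof: establish convergence of the whole sequence via the preceding theorem plus Lemma~\ref{lem:sharpMin}, reuse the local analysis of Theorem~\ref{eq:PNMconvRate} to get eventual unit stepsize, and then absorb the trailing forward-backward step using nonexpansiveness of $P_\gamma$ together with~\eqref{eq:convRateEq}. The paper is terser---it simply asserts ``there exists $\bar{k}$ such that $s^k=1$ for all $k\ge\bar{k}$'' without spelling out the induction---whereas you make explicit how infiniteness of $\mathcal{K}$ provides the seed and how the flag $s_k$ propagates; this is a useful clarification rather than a different argument. Your justification of nonexpansiveness of $P_\gamma$ via $\|I-\gamma\nabla^2 f(x)\|\le 1$ is also valid, though the paper instead invokes Lemma~\ref{le:zNonExp}.
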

\begin{proof}
Following exactly the same steps as in the proof of Theorem~\ref{eq:PNMconvRate} we can show that $\{d^k\}$ is superlinearly convergent with respect to $\{x^k\}$. Indeed, the derivation is independent of the algorithmic scheme and it is only related to how the direction $d^k$ is generated. This means that unit stepsize is eventually accepted, \ie, there exists a positive integer $\bar{k}$ such that $s^k=1$ for all $k\geq\bar{k}$. Therefore, eventually the iterates are given by
$$x^{ k+1}=P_\gamma(x^ k+d^ k),\qquad k\geq\bar{k}.$$
Due to nonexpansiveness of $P_\gamma$ we have
$$\|x^{ k+1}-x_\star\|=\|P_\gamma(x^ k+d^ k)-P_\gamma(x_\star)\|\leq\|x^k+d^k-x_\star\|.$$
The proof finishes by invoking~\eqref{eq:convRateEq}.
\iftoggle{svver}{\qed}{}
\end{proof}
As the next theorem shows, Algorithm~\ref{al:PGNM} not only enjoys fast asymptotic convergence rate properties but also comes with the following global complexity estimate.
\begin{theorem}\label{th:PGNMbnds1}
Let $\{x^{ k}\}$ be a sequence generated by Algorithm \ref{al:PGNM}. Assume that the level sets of $F$ are bounded, \ie $\|x-x_\star\|\leq R$ for some $x_\star\in X_\star$ and all $x\in\Re^n$ with $F(x)\leq F(x^0)$. If $F(x^0)-F_\star\geq R^2/\gamma$ then 
\begin{equation}\label{eq:FirstStep}
F(x^1)-F_\star\leq\frac{R^2}{2\gamma}.
\end{equation} 
Otherwise, for any $ k\in\Nn$ we have
\begin{equation}\label{eq:kStep}
F(x^{ k})-F_\star\leq\frac{2R^2}{\gamma(k+2)}.
\end{equation}
\end{theorem}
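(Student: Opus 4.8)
The plan is to control the residual $v_k\eqdef F(x^k)-F_\star$ and to show that, in the nontrivial case, it satisfies the Nesterov-type recursion $v_{k+1}\le v_k-c\,v_k^2$ with $c\eqdef\gamma/(2R^2)$, from which~\eqref{eq:kStep} follows by a standard telescoping. Two ingredients come for free from the convergence proof of Algorithm~\ref{al:PGNM}: the descent estimate~\eqref{eq:DesPGNM}, i.e. $v_{k+1}\le v_k-\tfrac{\gamma}{2}\|G_\gamma(x^k)\|^2$ for all $k$, and the intermediate inequality $F(x^{k+1})\le F_\gamma(x^k)$, valid in both branches (as shown there via~\eqref{eq:Armijo} and Theorem~\ref{Th:PropFg}\eqref{prop:LowBnd}). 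Monotonicity of $\{v_k\}$ keeps every iterate in the initial sublevel set, so $\|x^k-x_\star\|\le R$ throughout. The \emph{if} case is then immediate: evaluating the value function~\eqref{eq:Fmin} at $u=x_\star$ and using convexity of $f$ gives $F_\gamma(x^0)-F_\star\le\tfrac{1}{2\gamma}\|x^0-x_\star\|^2\le R^2/(2\gamma)$, and combining with $F(x^1)\le F_\gamma(x^0)$ yields~\eqref{eq:FirstStep}.

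For the \emph{otherwise} case the key is a sharp bound on $F_\gamma(x^k)-F_\star$. Writing $G\eqdef G_\gamma(x^k)$ and $p\eqdef P_\gamma(x^k)$, I would start from the value-function expression $F_\gamma(x^k)=f(x^k)-\gamma\nabla f(x^k)'G+g(p)+\tfrac{\gamma}{2}\|G\|^2$, bound $f(x^k)-f(x_\star)\le\nabla f(x^k)'(x^k-x_\star)$ by convexity, bound $g(p)-g(x_\star)\le(G-\nabla f(x^k))'(p-x_\star)$ by the subgradient inequality (recall $G-\nabla f(x^k)\in\partial g(p)$), and substitute $p=x^k-\gamma G$. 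The first-order terms cancel and leave
\begin{equation*}
F_\gamma(x^k)-F_\star\le G'(x^k-x_\star)-\tfrac{\gamma}{2}\|G\|^2\le R\|G\|-\tfrac{\gamma}{2}\|G\|^2 .
\end{equation*}
Together with $v_{k+1}\le F_\gamma(x^k)-F_\star$ this furnishes the second estimate $v_{k+1}\le R\|G\|-\tfrac{\gamma}{2}\|G\|^2$, alongside the descent estimate $v_{k+1}\le v_k-\tfrac{\gamma}{2}\|G\|^2$.

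The main obstacle is to merge these two estimates into the \emph{sharp} recursion $v_{k+1}\le v_k-c\,v_k^2$: the obvious move $\|G\|\ge v_{k+1}/R$ only gives $v_{k+1}\le v_k-c\,v_{k+1}^2$, which telescopes to a strictly worse constant than the stated one. The device I would use is that both estimates feature the \emph{same} quantity $\|G\|$, so I can case-split on its size. If $\|G\|\ge v_k/R$, the descent estimate alone gives $v_{k+1}\le v_k-\tfrac{\gamma}{2}(v_k/R)^2=v_k-c\,v_k^2$. If $\|G\|<v_k/R$, then because $v_k\le v_0<R^2/\gamma$ the abscissa $v_k/R$ still lies in the increasing branch of $t\mapsto Rt-\tfrac{\gamma}{2}t^2$, so the second estimate gives $v_{k+1}\le R\|G\|-\tfrac{\gamma}{2}\|G\|^2\le R(v_k/R)-\tfrac{\gamma}{2}(v_k/R)^2=v_k-c\,v_k^2$. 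Either way the recursion holds for every $k$, the hypothesis $v_0<R^2/\gamma$ together with monotonicity guaranteeing that we never leave the admissible regime $v_k<R^2/\gamma$.

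To conclude I would run the usual telescoping: dividing $v_k-v_{k+1}\ge c\,v_k^2$ by $v_kv_{k+1}>0$ and using $v_k\ge v_{k+1}$ gives $v_{k+1}^{-1}-v_k^{-1}\ge c$, so $v_k^{-1}\ge v_0^{-1}+ck$. Since $v_0<R^2/\gamma=1/(2c)$ we have $v_0^{-1}>2c$, hence $v_k^{-1}>c(k+2)$, i.e. $v_k<1/(c(k+2))=2R^2/(\gamma(k+2))$, which is~\eqref{eq:kStep}; the degenerate case $v_k=0$ is trivial.
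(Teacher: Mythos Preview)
Your proof is correct but takes a genuinely different route from the paper's. Both establish the key recursion $v_{k+1}\le v_k-\tfrac{\gamma}{2R^2}v_k^2$ and finish with the identical telescoping, but they reach it differently. The paper bypasses the gradient-mapping $G_\gamma$ entirely: it observes that $F_\gamma(x)\le F^\gamma(x)$ (the Moreau envelope of $F$), so that $F(x^{k+1})\le\min_z\{F(z)+\tfrac{1}{2\gamma}\|z-x^k\|^2\}$, then restricts $z$ to the segment $z=\alpha x_\star+(1-\alpha)x^k$, uses convexity of $F$, and optimizes the resulting scalar quadratic in $\alpha$; the optimal $\alpha=\gamma v_k/R^2\le 1$ immediately yields the recursion without any case analysis. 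Your approach instead derives the sharper structural bound $F_\gamma(x^k)-F_\star\le G'(x^k-x_\star)-\tfrac{\gamma}{2}\|G\|^2$ (this is essentially Proposition~\ref{prop:ProxBasic} with $x=x_\star$, $\bar x=x^k$, rearranged), and then needs the case-split on $\|G\|\gtrless v_k/R$ together with the monotonicity constraint $v_k<R^2/\gamma$ to close the loop. The paper's argument is shorter and exposes the Moreau-envelope connection; yours is more hands-on and makes the role of $\|G_\gamma(x^k)\|$ transparent, which can be useful if one wants to track residual norms rather than function gaps.
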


\begin{proof}
See Appendix.
\end{proof}

When $f$ is strongly convex  the global rate of convergence is linear. The next theorem gives the corresponding complexity estimates. 
\begin{theorem}\label{th:PGNMbnds2}
If $f\in\mathcal{S}_{\mu_f,L_f}^{1,1}(\Re^n)$, $\mu_f>0$, then 
\begin{subequations}
\begin{align}
F\left(x^ k\right)-F_\star&\leq(1+\gamma\mu_f)^{- k}(F(x^0)-F_\star),\label{eq:strC1}\\
\|x^{ k+1}-x_\star\|^2&{\leq}\frac{1-\gamma\mu_f}{\gamma\mu_f(1+\gamma\mu_f)^{k}}\|x^{0}-x_\star\|^2.\label{eq:strC2}
\end{align}
\end{subequations}
\end{theorem}
\begin{proof}
See Appendix.
\end{proof}
\begin{remark}\label{re:LSgamma}
We should remark that Theorems~\ref{th:PGNMbnds1} and~\ref{th:PGNMbnds2} remain valid even if
$L_f$ (and thus $\gamma$) is unknown and instead a backtracking line search procedure similar to those described in~\cite{beck2009fast,nesterov2007gradient}, is performed to determine a suitable value for $\gamma$.
\end{remark}

\section{Examples}\label{sec:Examples}
In this section we discuss the generalized Jacobian of the proximal mapping of many relevant nonsmooth functions. Some of the considered examples will be particularly useful in
Section~\ref{sec:Simulations} to test the effectiveness of Algorithms~\ref{al:PNM} and \ref{al:PGNM} on specific problems.

\subsection{Indicator functions}\label{ex:IndFun} Constrained convex problems can be cast in the composite form \eqref{eq:GenProb} by encoding the feasible set $D$ with the appropriate indicator
function $\delta_D$. Whenever $\Pi_D$, the projection onto $D$, is efficiently computable, then algorithms like the forward-backward splitting \eqref{eq:FBS} can be conveniently considered. In the following we
analyze the generalized Jacobian of some of such projections.

\subsubsection{Affine sets}\label{ex:projAff}
If $D=\{x\ |\ Ax= b\}$, $A\in\Re^{m\times n}$, then $\Pi_D(x)=x-A^{\dagger} (Ax-b)$, where $A^{\dagger}$ is the Moore-Penrose pseudoinverse of $A$. For example  if $m<n$ and $A$ has full row rank, then $A^{\dagger}=A'(AA')^{-1}$. Obviously $\Pi_D$ is an affine mapping, thus  everywhere differentiable with
\begin{equation}
\partial_C(\Pi_D)(x)=\partial_B(\Pi_D)(x)=\{\nabla\Pi_D(x)\}=\{I-A^{\dagger} A\}.
\end{equation}
\subsubsection{Polyhedral sets}\label{ex:projPoly}
In this case $D=\{x\ |\ Ax=b,\ Cx\leq d\}$, with $A\in\Re^{m_1\times n}$ and $C\in\Re^{m_2\times n}$. It is well known 
that $\Pi_D$ is piecewise affine. In particular let 
$$\mathscr{I}_D=\left\{I\subseteq[m_2]\ \left|\begin{array}{l} \textrm{there exists a vector }x\in\Re^n\textrm{ with }Ax=b,\\ \ C_{i\cdot}x=d_i,\ i\in I,\ C_{j\cdot}x<d_j,\ j\in[m_2]\setminus I\end{array}\right.\right\}$$
For each $I\in \mathscr{I}_D$ let 
\begin{align*}
F_I&=\{x\in D\ |\ C_{i\cdot}x=d_i,\ i\in I\},\\
S_I&=\aff F_I=\{x\in \Re^n\ |\ Ax=b,\ C_{i\cdot}x=d_i,\ i\in I\},\\
N_I&=\cone\left\{\begin{bmatrix}A'&C_{I\cdot}'\end{bmatrix}\right\},\\
C_I&=F_I+N_I.
\end{align*}
We then have $\Pi_D(x)\in\{\Pi_{S_I}(x)\ |\ I\in\mathscr{I}_D\}$, \ie $\Pi_D$ is a piecewise affine function. The affine  pieces of $\Pi_D$ are the projections on the corresponding affine subspaces $S_I$, see Section~\ref{ex:projAff}.  In fact for each $x\in C_I$ we have $\Pi_D(x)=\Pi_{S_I}(x)$, each $C_I$ is full dimensional and $\Re^n=\bigcup_{I\in\mathscr{I}_D}C_I$. 
For each $I\in \mathscr{I}_D$ let $P_I=\nabla \Pi_{S_I}$ and for each $x\in\Re^n$ let $J(x)=\{I\in \mathscr{I}_D\ |\ x\in C_I\}$. Then
$$\partial_C(\Pi_D)(x)=\conv\partial_B(\Pi_D)(x)=\conv\{P_I\ | I\in J(x)\}.$$
Therefore, in order to determine an element $P$ of $\partial_B(\Pi_D)(x)$ it suffices to compute $\bar{x}=\Pi_D(x)$ and take 
$P=I-B^{\dagger} B$, where
$$B=\begin{bmatrix}A\\ C_{I(x)\cdot}\end{bmatrix},$$
and $I(x)=\{i\in[n]\ |\ A_{i\cdot}\bar{x}=b_i\}$.
\subsubsection{Halfspaces}
We denote $(x)_{+} = \max\{0, x\}$. If $D=\{x\ |\ a'x\leq b\}$ then
$$\Pi_D(x)=x-\left(\frac{(a'x-b)_+}{\|a\|_2^2}\right)a$$
and 
$$\partial_C(\Pi_D)(x)=\begin{cases}\{I-(1/\|a\|^2)aa'\},&\textrm{ if }a'x>b,\\
\{I\},&\textrm{ if }a'x<b,\\
\conv\{I,I-(1/\|a\|^2)aa'\},&\textrm{ if }a'x=b.
\end{cases}$$
\subsubsection{Boxes}\label{ex:projBox}
Consider the box $D=\{x\ |\ \ell\leq x\leq u\}$, with $\ell_i\leq u_i$. We have
$$\Pi_D(x)=\min\{\max\{x,\ell\},u\}.$$
The corresponding indicator function $\delta_D$ is clearly separable, therefore  (Prop.~\ref{prop:Separ}) every element $P\in \partial_B(\Pi_D)(x)$ is diagonal with 
$$P_{ii}=\begin{cases}
1,&\textrm{ if } \ell< x< u,\\
0,&\textrm{ if } x<\ell\textrm{ or }x> u,\\
\{0,1\}, &\textrm{ if } x=\ell\textrm{ or }x= u.
\end{cases}$$
\subsubsection{Unit simplex}\label{ex:projSimplex}
When $D=\left\{x\ |\ x\geq 0,\ \sum_{i=1}^nx_i=1\right\}$,
one can easily see, by writing down the optimality conditions for the corresponding projection problem, that
$$\Pi_D(x)=(x-\lambda\mathbf{1})_+,$$
where $\lambda$ solves $\mathbf{1}'(x-\lambda\mathbf{1})_+=1$. Since the unit simplex is a polyhedral set, we are dealing with a special case of Section~\ref{ex:projPoly}, where $A=\mathbf{1}_n'$, $b=1$, $C=-I_n$ and $d=0$. Therefore, to calculate an element of the generalized Jacobian of the projection, we first compute $\Pi_D(x)$ and then determine the set of active indices \mbox{$J=\{i\in[n]\ |\ (\Pi_D(x))_i=0\}$}. Let $n_J=|J|$ and $J_c=[n]\setminus J$. An element $P$ of $\partial_B(\Pi_D)(x)$ is given by
$$P_{ij}=\begin{cases}0,&\textrm{ if } i,j\in J\\
-1/(n-n_J),&\textrm{ if } i\neq j, i,j\in J_c,\\
1-1/(n-n_J),&\textrm{ if } i= j, i,j\in J_c.\end{cases}$$
Notice that $P$ is block-diagonal after a permutation of rows and columns. 
The nonzero part $P_{J_cJ_c}$ is Toeplitz, so we can compute matrix vector products in $O(n_{J_c}\log n_{J_c})$ instead of $O(n_{J_c}^2)$ operations. Computing an element of the generalized Jacobian of the projection on \mbox{$D=\{x\ |\ a'x=b,\ \ell\leq x\leq u\}$} can be treated in a similar fashion.
\subsubsection{Euclidean unit ball}
If $g=\delta_{B_2}$, where $B_2$ is the Euclidean unit ball then 
$$\Pi_{B_2}(x)=\begin{cases}
x/\|x\|_2,&\textrm{ if } \|x\|_2>1,\\
x, & \textrm{ otherwise }
\end{cases}$$
and
$$\partial_C(\Pi_{B_2})(x)=\begin{cases}
\{(1/\|x\|_2)(I-ww')\},&\textrm{ if } \|x\|_2>1,\\
\{I\},&\textrm{ if } \|x\|_2<1,\\
\conv\{(1/\|x\|_2)(I-ww'),I\}, & \textrm{ otherwise,}
\end{cases}$$
where $w=x/\|x\|_2^2$.
Equality follows from the fact that $\Pi_{B_2}:\Re^n\to\Re^n$ is a piecewise smooth function.
\subsubsection{Second-order cone}
Given a point $x=(x_0,\bar{x})\in\Re\times\Re^n$, each element of $V\in\partial_B(\Pi_K)(z)$ has the following
representation~\cite[Lem.~2.6]{kanzow2009local}:
$$ V=0\textrm{ or } V=I_{n+1}\textrm{ or } V=\begin{bmatrix}1 & \bar{w}'\\ \bar{w}& H\end{bmatrix}, $$
for some vector $\bar{w}\in\Re^n$ with $\|\bar{w}\|_2=1$ and some matrix $H\in\Re^{n\times n}$ of the form
\begin{equation}
H=(1+\alpha)I_n-\alpha\bar{w}\bar{w}',\quad |\alpha|\leq 1. \label{eq:HSOC}
\end{equation}
More precisely:
\begin{enumerate}[(i)]
\item if $x_0\neq\pm\|\bar{x}\|_2$, then $\bar{w} = \bar{x}/\|\bar{x}\|,\ \alpha=x_0/\|\bar{x}\|,$
\item if $\bar{x}\neq 0$ and ${x}_0=+\|\bar{x}\|_2$, then $\bar{w} = \bar{x}/\|\bar{x}\|,\ \alpha=+1,$
\item if $\bar{x}\neq 0$ and ${x}_0=-\|\bar{x}\|_2$, then $\bar{w} = \bar{x}/\|\bar{x}\|,\ \alpha=-1,$
\item if $\bar{x}=0$ and $x_0=0$, then either $V=0$ or $V=I_{n+1}$ or it has $H$ as in~\eqref{eq:HSOC}
for any $\bar{w}$ with $\|\bar{w}\|=1$ and $\alpha$ with $|\alpha|\leq 1$.
\end{enumerate}
\subsection{Vector norms} 
\subsubsection{Euclidean norm}
If $g(x)=\|x\|_2$ then the proximal mapping is given by
$$\prox_{\gamma g}(x)=\begin{cases}
(1-\gamma/\|x\|_2)x,&\textrm{ if } \|x\|_2\geq\gamma,\\
0,&\textrm{ otherwise}.\end{cases}$$
Since $\prox_{\gamma g}$ is a $P{C}^1$ mapping, its $B$-subdifferential can be computed by simply computing the Jacobians of its smooth pieces. Specifically we have
$$\partial_B(\prox_{\gamma g})(x)=\begin{cases}\left\{I-\gamma/\|x\|_2\left(I-ww'\right)\right\},&\textrm{ if } \|x\|_2>\gamma,\\
\{0\},&\textrm{ if } \|x\|_2<\gamma,\\
\left\{I-\gamma/\|x\|_2\left(I-ww'\right),0\right\},&\textrm{ otherwise}.\end{cases}$$
where $w=x/\|x\|_2$.
\subsubsection{$\ell_1$ norm}\label{ex:EllOne}
The proximal mapping of $g(x)=\|x\|_1$ is the well known soft-thresholding operator
$$(\prox_{\gamma g}(x))_i=(\sign(x_i)(|x_i|-\gamma)_+)_i,\quad i\in[n].$$
Function $g$ is separable, therefore according to Proposition~\ref{prop:Separ} every element of $\partial_B(\prox_{\gamma g})$ is a diagonal matrix. The explicit form of the elements of $\partial_B(\prox_{\gamma g})$ is as follows.
Let $\alpha=\{i\ |\ |x_i|>\gamma\}$, $\beta=\{i\ |\ |x_i|=\gamma\}$, $\delta=\{i\ |\ |x_i|<\gamma\}$. Then
$P\in\partial_B(\prox_{\gamma g})(x)$ if and only if $P$ is diagonal with elements
$$P_{ii}=\begin{cases}
1,&\textrm{ if } i\in\alpha,\\
\in\{0,1\},&\textrm{ if } i\in\beta,\\
0,&\textrm{ if } i\in\delta.
\end{cases}$$
We could also arrive to the same conclusion by applying Proposition~\ref{prop:MorDec} to the function of Section~\ref{ex:projBox} with $u=-\ell=\mathbf{1}_n$, since the $\ell_1$ norm is the conjugate of the indicator of the $\ell_\infty$ -norm ball.
\subsubsection{Sum of norms} If $g(x)=\sum_{s\in\mathcal{S}}\|x_s\|_2$, where $\mathcal{S}$ is a partition of $[n]$, then
$$(\prox_{\gamma g}(x))_s=\left(1-\frac{\gamma}{\|x_s\|_2}\right)_+x_s,$$
for all $s\in\mathcal{S}$. Any $P\in\partial_B(\prox_{\gamma g})(x)$ is block diagonal with the $s$-th block equal to $I-\gamma/\|x_s\|_2\left(I-(1/\|x_s\|_2^2)x_sx_s'\right)$, if $\|x_s\|_2>\gamma$, $I$ if $\|x_s\|_2<\gamma$ and any of these two matrices if $\|x_s\|_2=\gamma$.
\subsection{Support function}\label{ex:SuppFun} Since $\sigma_C(x) = \sup_{y\in C}x'y$ is the conjugate of the indicator $\delta_C$, one can use Proposition~\ref{prop:MorDec} to find that
$$\partial_B(\prox_{\gamma g})(x) = \left\{P = I-Q : Q\in\partial_B(\Pi_C)(x/\gamma)\right\}.$$
Depending on the specific set $C$ (see Section~\ref{ex:IndFun}) one obtains the appropriate subdifferential. A particular example is the following.
\subsection{Pointwise maximum}
Function $g(x)=\max\{x_1,\ldots,x_n\}$ is conjugate to the indicator of the unit simplex already analyzed in Section~\ref{ex:projSimplex}. Applying Proposition~\ref{prop:MorDec} we obtain
$$\partial_B(\prox_{\gamma g})(x)=\{P=I-Q\ |\ Q\in\partial_B(\Pi_D)(x/\gamma)\}$$
Then $\Pi_D(x/\gamma)=(x/\gamma-\lambda\mathbf{1})_+$ where $\lambda$ solves $\mathbf{1}'(x/\gamma-\lambda\mathbf{1})_+=1$.
Let $J=\{i\in[n]\ |\ (\Pi_D(x/\gamma))_i=0\}$, $n_J=|J|$ and $J_c=[n]\setminus J$. It follows that an element of $\partial_B(\prox_{\gamma g})(x)$  is block-diagonal (after a reordering of variables) with
$$P_{ij}=\begin{cases}1,&\textrm{ if } i,j\in J\\
1+1/(n-n_J),&\textrm{ if } i\neq j, i,j\in J_c,\\
1/(n-n_J),&\textrm{ if } i= j, i,j\in J_c.\end{cases}$$
\subsection{Spectral functions}
 For any symmetric $n$ by $n$ matrix $X$, the eigenvalue function $\lambda:\Ss^n\to\Re^n$ returns the vector of its eigenvalues in nonincreasing order. 
 Now consider function $G:\Ss^n\to\bar{\Re}$
\begin{equation}\label{eq:SpecFun}
G(X)=h(\lambda(X)),\quad X\in\Ss^n,
\end{equation} 
where  $h:\Re^n\to\bar{\Re}$ is proper, closed, convex and symmetric, \ie invariant under coordinate permutations.
Functions of this form are called \emph{spectral functions}\cite{lewis1996convex}. Being a spectral function, $G$ inherits most of the properties of $h$\cite{lewis1996derivatives, lewis2001twice}. In particular, its proximal mapping is simply\cite[Sec.~6.7]{parikh2013proximal}
$$\prox_{\gamma G}(X)=Q\diag(\prox_{\gamma h}(\lambda(X)))Q',$$
where $X=Q\diag(\lambda(X))Q'$ is the spectral decomposition of $X$ ($Q$ is an orthogonal matrix).
Next, we further assume that
\begin{equation}\label{eq:SymSep}
h(x)=g(x_1)+\cdots+g(x_N),
\end{equation}
where $g:\Re\to\bar{\Re}$.
Since $h$ is also separable we have that 
$$\prox_{\gamma h}(x)=(\prox_{\gamma g}(x_1),\ldots,\prox_{\gamma g}(x_N)),$$ 
therefore the proximal mapping of $G$ can be expressed as
\begin{equation}\label{eq:ProxSpec}
\prox_{\gamma G}(X)=Q\diag(\prox_{\gamma g}(\lambda_1(X)),\ldots,\prox_{\gamma g}(\lambda_n(X)))Q'.
\end{equation}
Functions of this form are called \emph{symmetric matrix-valued functions}
~\cite[Chap. V]{bhatia1997matrix},~\cite[Sec.~6.2]{horn1991topics}.
Now we can use the theory of nonsmooth symmetric matrix-valued functions developed in~\cite{chen2003analysis} to
analyze differentiability properties of $\prox_{\gamma G}$. In particular $\prox_{\gamma G}$ is (strongly)
semismooth at $X$ if and only if $\prox_{\gamma g}$ is (strongly) semismooth at the eigenvalues of X~\cite[Prop.~4.10]{chen2003analysis}.
Moreover, for any $X\in\Ss^n$ and $P\in\partial_B(\prox_{\gamma G})(X)$ we have~\cite[Lem.~4.7]{chen2003analysis} 
\begin{equation}\label{eq:JacSpec}
P(S)=Q(\Omega\circ(Q'SQ))Q',\ \forall S\in\Ss^n,
\end{equation}
where $\circ$ denotes the Hadamard product and the matrix $\Omega\in\Re^{n\times n}$ is defined by
\begin{equation}\label{eq:GammaJac}
\Omega_{ij}=\begin{cases}
\frac{\prox_{\gamma g}(\lambda_i)-\prox_{\gamma g}(\lambda_j)}{\lambda_i-\lambda_j},&\textrm{ if } \lambda_i\neq\lambda_j,\\
\in\partial(\prox_{\gamma g})(\lambda_i),&\textrm{ if } \lambda_i=\lambda_j.
\end{cases}
\end{equation}
\subsubsection{Indicator of the positive semidefinite cone}
The indicator of $\Ss_+^n$ can be expressed as in~\eqref{eq:SpecFun} with $h$ given by~\eqref{eq:SymSep} and $g=\delta_{\Re_+}$. Then $\prox_{\gamma g}(x)=\Pi_{\Re_+}(x)=(x)_+$ and according to~\eqref{eq:ProxSpec} we have
$$\Pi_{\Ss^n_+}(X)=Q\diag((\lambda_1)_+,\ldots,(\lambda_n)_+)Q'.$$
Let
$\alpha=\{i\ |\ \lambda_i>0\}$ and $\bar{\alpha}=[n]\setminus\alpha$.  An element of $\partial_B\Pi_{\mathbb{S}_+^n}(X)$ is given by~\eqref{eq:JacSpec} with
$$\Omega =\begin{bmatrix} \Omega_{\alpha\alpha}& k_{\alpha\bar{\alpha}}\\  k_{\alpha\bar{\alpha}}'&0 \end{bmatrix},$$
where $\Omega_{\alpha\alpha}$ is a matrix of ones and $ k_{ij}=\frac{\lambda_i}{\lambda_i-\lambda_j},\ i\in\alpha,\ j\in\bar{\alpha}$. In fact we have $P(S)=H+H'$~\cite[Sec.~4]{zhao2010newton} where
$$H=Q_\alpha\left(\tfrac{1}{2}(UQ_\alpha)Q_\alpha'+( k_{\alpha\bar{\alpha}}\circ(UQ_{\bar{\alpha}}))Q_{\bar{\alpha}}'\right)$$
and $U=Q_{\alpha}'S$. Therefore we can form $P(S)$ in at most $8|\alpha|n^2$ flops. When $|\alpha|>|\bar{\alpha}|$, we can alternatively express $P(S)$ as $S-Q'((E-\Omega)\circ(Q'SQ))Q'$, where $E$ is a matrix of all ones and compute it in $8|\bar{\alpha}|n^2$ flops.

\subsection{Orthogonally invariant functions}
A function $G:\Re^{m\times n}\to\bar{\Re}$ is called \emph{orthogonally invariant} if
$$G(UXV')=G(X),$$
for all $X\in\Re^{m\times n}$ and all orthogonal matrices $U\in\Re^{m\times m}$, $V\in\Re^{n\times n}$. When the elements of $X$ are allowed to be complex numbers then functions of this form are called \emph{unitarily invariant}~\cite{lewis1995convex}.
A function $h:\Re^{q}\to\bar{\Re}$ is \emph{absolutely symmetric} if $h(Qx)=h(x)$ for all $x\in\Re^p$ and any generalized permutation matrix $Q$, \ie a matrix $Q\in\Re^{q\times q}$ that has exactly one nonzero entry in each row and each column, that entry being $\pm 1$~\cite{lewis1995convex}.
There is a one-to-one correspondence between orthogonally invariant functions on $\Re^{m\times n}$ and absolutely symmetric functions on $\Re^q$. Specifically if $G$ is orthogonally invariant then 
$$G(X)=h(\sigma(X)),$$
for the absolutely symmetric function $h(x)=G(\diag(x))$. Here for $X\in\Re^{m\times n}$, the spectral function
$\sigma:\Re^{m\times n}\to\Re^q$, $q=\min\{m,n\}$ returns the vector of its singular values in nonincreasing order.
Conversely, if $h$ is absolutely symmetric then $G(X)= h(\sigma(X))$ is orthogonally invariant.
Therefore, convex-analytic and generalized differentiability properties of orthogonally invariant functions can be
easily derived  from those of the corresponding absolutely symmetric functions~\cite{lewis1995convex}.
For example, assuming for simplicity that $m\leq n$, the proximal mapping of $G$ is given by
(see e.g. \cite[Sec.~6.7]{parikh2013proximal})
$$\prox_{\gamma G}(X)=U\diag(\prox_{\gamma h}(\sigma(X)))V_1',$$
where $X=U\begin{bmatrix}\diag(\sigma(X)),&0\end{bmatrix}\begin{bmatrix}V_1,& V_2\end{bmatrix}'$ is the singular value decomposition of $X$.
If we further assume that $h$ is separable as in~\eqref{eq:SymSep} then
\begin{equation}\label{eq:ProxSpec2}
\prox_{\gamma G}(X)=U\Sigma_g(X)V_1',
\end{equation}
where $\Sigma_g(X)=\diag(\prox_{\gamma g}(\sigma_1(X)),\ldots,\prox_{\gamma g}(\sigma_n(X)))$.
Functions of this form are called \emph{nonsymmetric matrix-valued functions}. We also assume that $g$ is a non-negative function such that $g(0)=0$.
This implies that $\prox_{\gamma g}(0)=0$ and guarantees that the nonsymmetric matrix-valued function~\eqref{eq:ProxSpec2} is well-defined~\cite[Prop.~2.1.1]{yang2009study}. Now we can use the results of~\cite[Ch. 2]{yang2009study} to  draw conclusions about generalized differentiability properties of $\prox_{\gamma G}$.
For example, through~\cite[Th. 2.27]{yang2009study} we have that 
$\prox_{\gamma G}$ is continuously differentiable at $X$ if and only if $\prox_{\gamma g}$ is continuously differentiable at the singular values of $X$. Furthermore, $\prox_{\gamma G}$ is (strongly) semismooth at $X$ if $\prox_{\gamma g}$ is (strongly) semismooth at the singular values of  $X$ \cite[Th. 2.3.11]{yang2009study}. 

For any $X\in\Re^{m\times n}$ the generalized Jacobian $\partial_B(\prox_{\gamma G}) (X)$ is well defined and nonempty and
any $P\in\partial_B(\prox_{\gamma G})(X)$ acts on $H\in\Re^{m\times n}$ as \cite[Prop.~2.3.7]{yang2009study}
\begin{equation}\label{eq:JacNS}
P(H)=U\begin{bmatrix}\left(\Omega_{1}\circ\left(\frac{H_1+H_1'}{2}\right)+\Omega_{2}\circ\left(\frac{H_1-H_1'}{2}\right)\right),&(\Omega_{3}\circ H_2)\end{bmatrix}\begin{bmatrix}V_1,&V_2\end{bmatrix}'
\end{equation}
where $H_1=U'HV_1\in\Re^{m\times m}$, $H_2=U'HV_2\in\Re^{m\times(n-m)}$ 
and $\Omega_{1}\in\Re^{m\times m }$, $\Omega_{2}\in\Re^{m\times m }$, $\Omega_{3}\in\Re^{m\times (n-m) }$ are given by 
\begin{align*}
(\Omega_{1})_{ij}&=\begin{cases}
\frac{\prox_{\gamma g}(\sigma_i)-\prox_{\gamma g}(\sigma_j)}{\sigma_i-\sigma_j},&\textrm{ if } \sigma_i\neq\sigma_j,\\
\in\partial \prox_{\gamma g}(\sigma_i),&\textrm{ if }\sigma_i=\sigma_j,
\end{cases}\\
(\Omega_{2})_{ij}&=\begin{cases}
\frac{\prox_{\gamma g}(\sigma_i)-\prox_{\gamma g}(-\sigma_j)}{\sigma_i+\sigma_j},&\textrm{ if } \sigma_i\neq -\sigma_j,\\
\in\partial \prox_{\gamma g}(0),&\textrm{ if }\sigma_i=\sigma_j=0,
\end{cases}\\
(\Omega_{3})_{ij}&=\begin{cases}
\frac{\prox_{\gamma g}(\sigma_i)}{\sigma_i},&\textrm{ if } \sigma_i\neq 0,\\
\in\partial \prox_{\gamma g}(0),&\textrm{ if }\sigma_i=0.
\end{cases}
\end{align*}

\subsubsection{Nuclear norm} For an $m$ by $n$ matrix $X$ the nuclear norm, $G(X)=\|X\|_{*}$, is the sum of its singular values, \ie $G(X)=\sum_{i=1}^m\sigma_i(X)$
(we are again assuming, for simplicity, that $m\leq n$). The nuclear norm serves as a convex surrogate for the rank of a matrix. 
It has found many applications in systems and control theory, including system identification and model reduction~\cite{fazel2001rank,fazel2002matrix,fazel2004rank, liu2009interior,recht2010guaranteed}.
Other fields of application include \emph{matrix completion problems} arising in machine learning~\cite{srebro2004learning,rennie2005fast}
and computer vision~\cite{tomasi1992shape,morita1997sequential}, and \emph{nonnegative matrix factorization problems} arising in data mining~\cite{elden2007matrix}.

The nuclear norm can be expressed as  $G(X)=h(\sigma(X))$, where $h(x)=\|x\|_1$. 
Apparently, $h$ is absolutely symmetric and separable. Specifically, it takes the form~\eqref{eq:SymSep} with $g=|\cdot|$, for which $0\in\dom g$ and $0\in\partial g(0)$.
The proximal mapping of the absolute value is the soft-thresholding operator. 
In fact, since the case of interest here is $x\geq 0$ (because $\sigma_i(X)\geq 0$), we have $\prox_{\gamma g}(x)=(x-\gamma)_+$. Consequently, the proximal mapping of
$\|X\|_*$ is given by~\eqref{eq:ProxSpec2} with 
$$\Sigma_g(X)=\diag((\sigma_1(X)-\gamma)_+,\ldots,(\sigma_m(X)-\gamma)_+).$$
For $x\in\Re_+$ we have that 
\begin{equation}\label{eq:subSoft}
\partial(\prox_{\gamma g})(x)=\begin{cases}
0,&\textrm{ if } 0\leq x<\gamma,\\
[0,1],&\textrm{ if } x=\gamma,\\
1,&\textrm{ if } x>\gamma.
\end{cases}
\end{equation}
Let $\alpha=\{i\ |\ \sigma_i(X)>\gamma\}$, $\beta=\{i\ |\ \sigma_i(X)=\gamma\}$ and $\delta=\{i\ |\ \sigma_i(X)<\gamma\}$.
Taking into account~\eqref{eq:subSoft}, an element $P$ of the $B$-subdifferential $\partial_B(\prox_{\gamma G})(X)$ satisfies~\eqref{eq:JacNS} with
\begin{align*}
\Omega_1&=
    \begin{bmatrix}\omega_{\alpha\alpha}^1&\omega_{\alpha\beta}^1&\omega_{\alpha\delta}^1\\
    (\omega_{\alpha\beta}^1)'&\omega_{\beta\beta}^1&0\\
    (\omega_{\alpha\delta}^1)'&0&0\end{bmatrix},    
\quad&
    \begin{array}{ll}\omega^1_{ij}=1, &i\in\alpha, j\in\alpha\cup\beta,\\
    \omega^1_{ij}=\frac{\sigma_i(X)-\gamma}{\sigma_i(X)-\sigma_j(X)},& i\in\alpha, j\in\delta,\\
    \omega_{ij}^1=\omega_{ji}^1=[0,1],&i,j\in\beta\end{array}\\
\Omega_2&=
    \begin{bmatrix}\omega_{\alpha\alpha}^2&\omega_{\alpha\beta}^2&\omega_{\alpha\delta}^2\\
    (\omega_{\alpha\beta}^2)'&0&0\\
    (\omega_{\alpha\delta}^2)'&0&0\end{bmatrix},
\quad&
    \begin{array}{ll}\\
    \omega^2_{ij}=\frac{(\sigma_i(X)-\gamma)_++(\sigma_j(X)-\gamma)_+}{\sigma_i(X)+\sigma_j(X)},& i\in\alpha, j\in [m],\\
    \\\end{array}\\
\Omega_3&=
    \begin{bmatrix}\omega_{\alpha [n-m]}^3\\
    0\end{bmatrix},
\quad&
    \begin{array}{ll}\\
    \omega^3_{ij}=\frac{\sigma_i(X)-\gamma}{\sigma_i(X)},& i\in\alpha, j\in [n-m].\\
    \\
\end{array}
\end{align*}

\section{Simulations}\label{sec:Simulations}
This section is devoted to the application of Algorithms~\ref{al:PNM} and~\ref{al:PGNM} to some practical problems. Based on the results obtained in Section~\ref{sec:Examples},
we discuss the Newton system for each of the examples, and compare the proposed approach against other algorithms on the basis of numerical results obtained with \matlab.


\subsection{Box constrained QPs}
A quadratic program with box constraints
can be reformulated in the form \eqref{eq:GenProb} by adding to the cost the indicator of the feasible set, namely $\delta_{[l,u]}$. Then
$$ f(x) = \frac{1}{2}x'Qx + q'x,\quad g(x) = \delta_{[l,u]}(x). $$
The B-subdifferential, in this case, is composed of diagonal matrices, with diagonal
elements in $\{0,1\}$, cf. Section~\ref{ex:projBox}. More precisely, in Algorithm~\ref{al:PNM}, we can split
variable indices in the two sets
\begin{align*}
\alpha &= \left\{i\ \left.\right|\ l_i < \left[x-\gamma\nabla f(x)\right]_i < u_i\right\},\\
\bar\alpha &= \left\{1,\ldots, n\right\}\setminus \alpha,
\end{align*}
and choose $P=\diag(p_1,\ldots,p_n)$, with $p_i=1$ if $i\in\alpha$ and $p_i=0$ otherwise.
Then the Newton system~\eqref{eq:RegNewtSys} reduces the triangular form
$$ \begin{bmatrix} I_{|\bar\alpha|} & \\ \gamma Q_{\alpha\bar\alpha} & \gamma Q_{\alpha\alpha} \end{bmatrix} d^k = P_{\gamma}(x^k) - x^k.$$
This can be solved by forward substitution, where only the $|\alpha|$-by-$|\alpha|$ block is solved via CG.
We tested the proposed algorithms against the commercial QP solver \gurobi, \matlab's built-in ``quadprog'' solver,
the accelerated forward-backward splitting~\cite{nesterov2007gradient}
(with constant stepsize) and the alternating directions method of multipliers (ADMM)~\cite{boyd2011distributed}. The latter was both implemented using a direct solver, which requires
the initial computation of the Cholesky factor of $Q$, and the conjugate gradient method. Random problems were generated with chosen size, density and condition number,
as explained in~\cite{gonzaga2013optimal}. Figures~\ref{fig:QPbox_size_cond}-\ref{fig:QP_time_vs_obj} show the results obtained:
the proposed algorithms are generally faster then the others, and also appear to scale
good with respect to problem size and condition number.

\begin{figure}[tp!]
\center
\begin{tabular}{cc}
\subfloat[Problem size]{\includegraphics[width=0.49\textwidth]{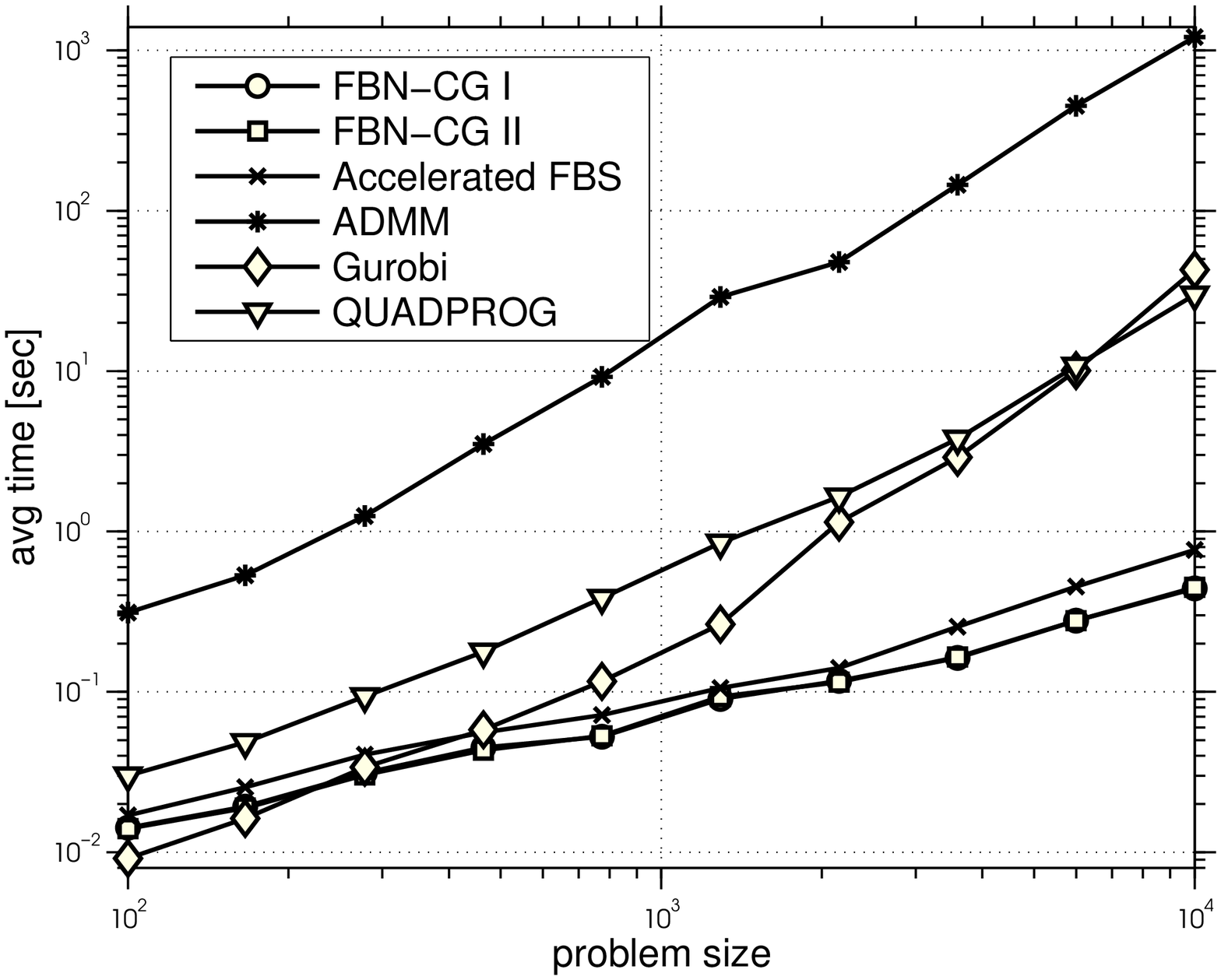}} &
\subfloat[Condition number]{\includegraphics[width=0.49\textwidth]{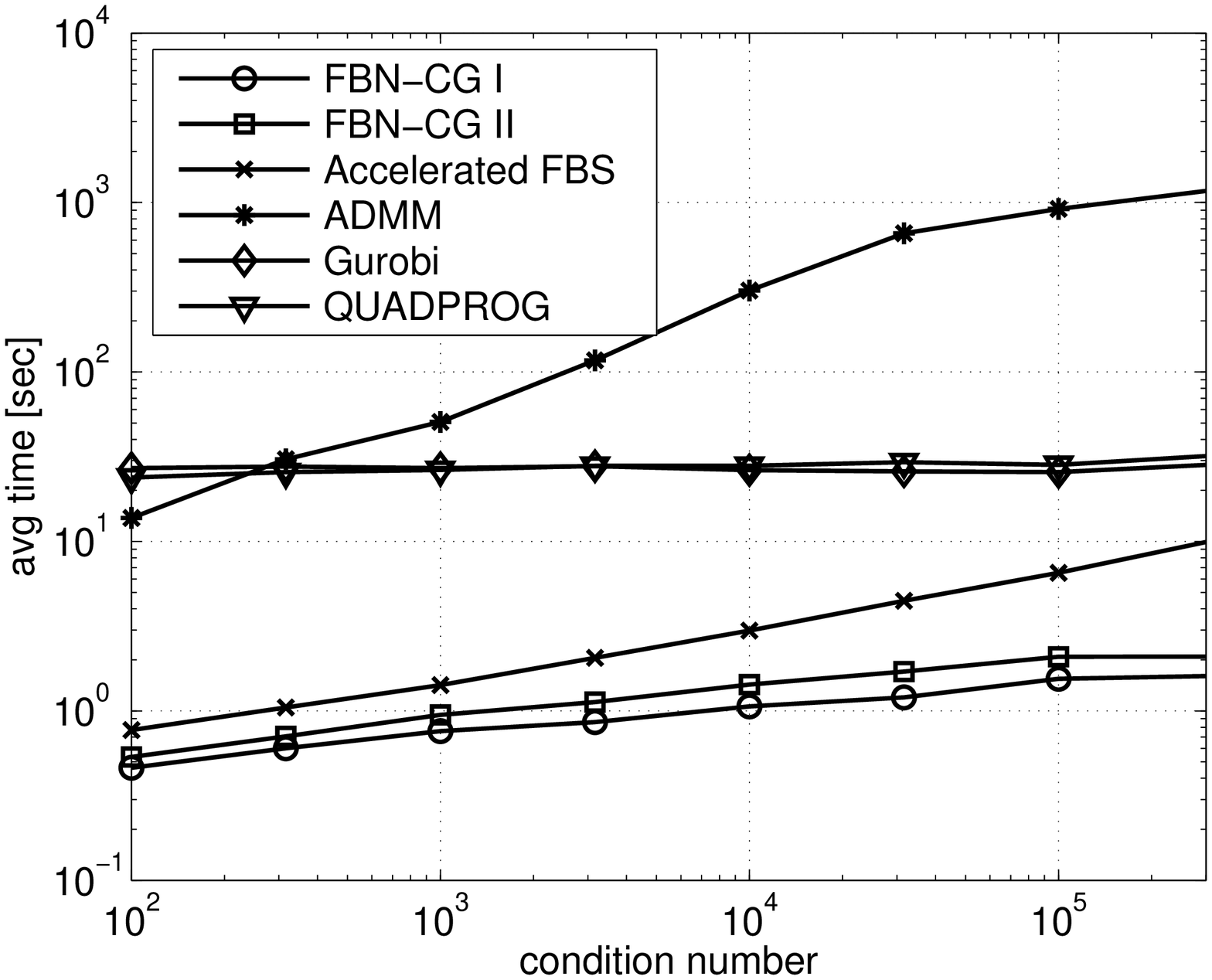}}
\end{tabular}
\caption{Box constrained QPs. Average running times over a sample of $20$ random instances, with increasing problem size and condition number.}
\label{fig:QPbox_size_cond}
\end{figure}


\subsection{General QPs}
If we consider the more general quadratic programming problem with constraint $l\leq Ax \leq u$,
$A\in\Re^{m\times n}$, then the projection onto the feasible set is not
explicitly computable like in the previous example.
Formulating the Fenchel dual, and letting $w$ be the dual variable, one can tackle the composite problem with
$$ f(w) = \frac{1}{2}(A'w+q)'Q^{-1}(A'w+q),\qquad g(w) = \sigma_{[l,u]}(w).$$
Also in this case $\prox_{\gamma g}(w) = w-\Pi_{[\gamma l,\gamma u]}(w)$
has its B-subdifferential composed of binary diagonal matrices, cf. Section~\ref{ex:SuppFun}:
\begin{align*}
\bar\alpha &= \left\{i\ \left.\right|\ \gamma l_i \leq \left[x-\gamma\nabla f(x)\right]_i \leq \gamma u_i\right\},\\
\alpha &= \left\{1,\ldots, n\right\}\setminus \alpha.
\end{align*}
Choosing $P=\diag(p_1,\ldots,p_n)$, with $p_i=1$ if $i\in\alpha$ and $p_i=0$ otherwise,
just like in the previous case system~\eqref{eq:RegNewtSys} is block-triangular:
$$ \begin{bmatrix} I_{|\bar\alpha|} & \\ \gamma A_{\alpha}Q^{-1}A_{\bar\alpha}' & \gamma A_{\alpha}Q^{-1}A_{\alpha}' \end{bmatrix} d = P_{\gamma}(w) - w. $$
Here subscripts denote \emph{row} subsets. The latter is solved by forward
substitution, and the $|\alpha|$-by-$|\alpha|$ block is solved via CG.
Note that all the products with $Q^{-1}$ are merely formal, and require a
previous computation of the Cholesky factor of $Q$. Figure~\ref{fig:QP_time_vs_obj} compares Algorithm~\ref{al:PNM} and \ref{al:PGNM} to the accelerated
version of FBS~\cite{nesterov2007gradient} and to ADMM~\cite{boyd2011distributed}, in terms of objective value decrease.

\begin{figure}[tp!]
\center
\begin{tabular}{cc}
\subfloat[Box constrained QP, $n=1500$ and $\kappa = 10^4$.]{\includegraphics[width=0.49\textwidth]{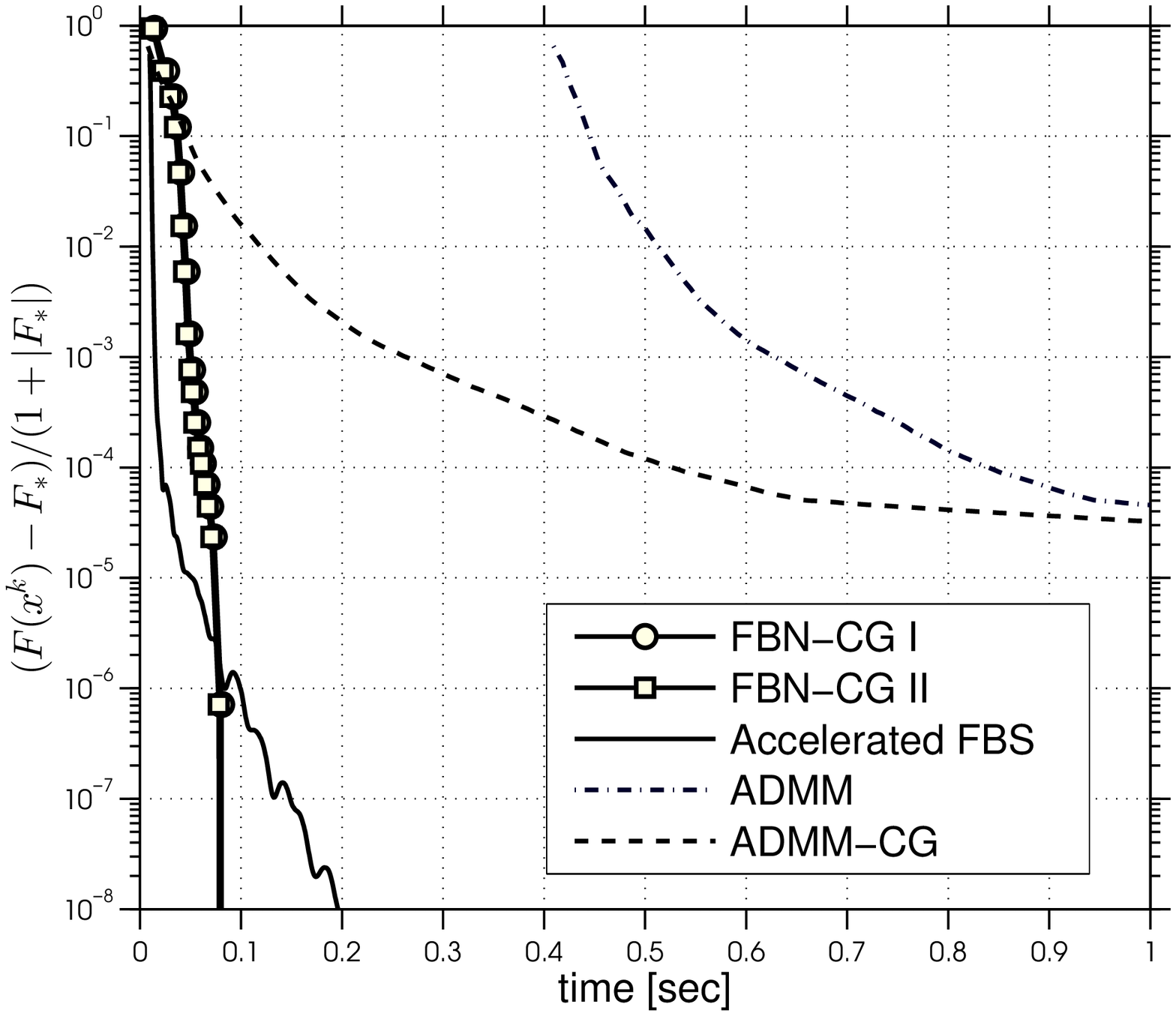}} &
\subfloat[General QP, $n=1000$, $m=2000$ and $\kappa = 10^3$.]{\includegraphics[width=0.49\textwidth]{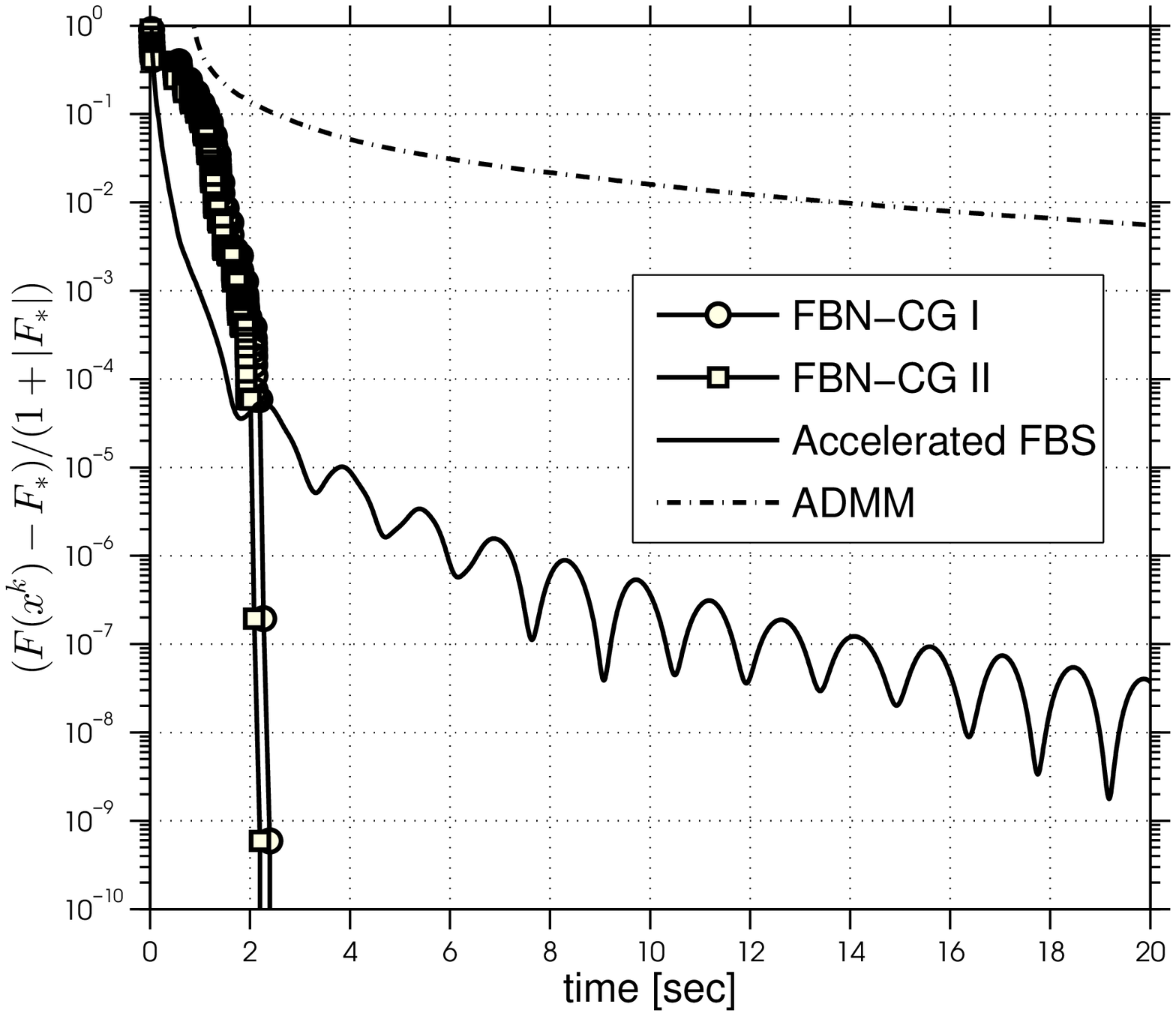}}
\end{tabular}
\caption{QPs. Comparison of the methods applied to a box constrained QP (primal) and to a general QP (dual).}
\label{fig:QP_time_vs_obj}
\end{figure}


\subsection{\texorpdfstring{$\ell_1$}{l1}-regularized least squares}
This is a classical problem arising in many fields like statistics, machine learning, signal and image processing.
The purpose is to find a sparse solution to an underdetermined linear system.
We have
$$ f(x) = \frac{1}{2}\|Ax-b\|_2^2,\qquad g(x) = \lambda\|x\|_1,$$
where $A\in\Re^{m\times n}$ with $m < n$. The $\ell_1$-regularization term is known to promote sparsity in the solution vector $x^*$.
As we mentioned in Section~\ref{ex:EllOne}, the proximal mapping of the $\ell_1$ norm is the soft-thresholding operator, whose generalized Jacobian is diagonal.
Specifically, if
\begin{align*}
\alpha &= \left\{i\ \left.\right|\ \left|\left[x-\gamma\nabla f(x)\right]_i\right| > \gamma\lambda\right\},\\
\bar\alpha &= \left\{1,\ldots, n\right\}\setminus \alpha,
\end{align*}
then $P=\diag(p_1,\ldots,p_n)$, with $p_i=1$ if $i\in\alpha$ and $p_i=0$ otherwise, is an element of $\partial_B(\prox_{\gamma g})(x-\gamma\nabla f(x))$.
The simplified system~\eqref{eq:RegNewtSys} reduces then to
\begin{equation}
\begin{bmatrix}
I_{|\bar\alpha|} & \\
\gamma A_{\alpha}'A_{\bar\alpha} & \gamma A_{\alpha}'A_{\alpha}
\end{bmatrix}d = P_{\gamma}(x)-x.
\end{equation}
Here subscripts denote \emph{column} subsets.
The dimension of the problem to solve at each iteration is then $|\alpha|$: the smaller this set is, the cheaper
the computation of the Newton direction is.
Noting that at, any given $x$, larger values of $\lambda$ allow for smaller size of $\alpha$, and that decreasing $\lambda$ decreases the objective value,
we can set up a simple continuation scheme in order to keep the size of $\alpha$ small:
starting from a relatively large value of $\lambda = \lambda_{\mbox{\tiny max}} > \lambda_0$, we decrease it every time a certain criterion is met until $\lambda = \lambda_0$,
using the solution of one step as to warm-start the next one. Specifically, we set $\lambda_{\mbox{\tiny max}} = \|A'b\|_{\infty}$,
which is the threshold above which the null solution is optimal. For an in-depth analysis of such continuation techniques on this type of problems, see \cite{xiao2013proximal}.
We compared our method to SpaRSA~\cite{wright2009sparse},
YALL1~\cite{yang2011alternating} and l1\_ls~\cite{koh2007interior}.
The algorithms were tested against the datasets available at
\url{wwwopt.mathematik.tu-darmstadt.de/spear}~\cite{lorenz2013constructing}.
These include datasets with different sizes and dynamic ranges of the solution.
In each test we obtained a reference solution by running the method extensively, with
a very small tolerance as stopping criterion. Then we set all the algorithms to stop
as soon as the primal objective value reached a threshold at a relative distance
$\epsilon_r = 10^{-8}$ from the reference solution.
Figure~\ref{fig:l1ls_1} reports the performance profiles \cite{dolan2012benchmarking}
of the algorithms considered on the aforementioned problem set.
A point $(r,f)$ on a line indicates that the correspondent algorithm had a
performance ratio\footnote{An algorithm has a performance ratio $r$, with respect
to a problem, if its running time is $r$ times the running time of the top
performing algorithm among the ones considered.} at most $r$ in a fraction $f$ of problems.
It appears that the forward-backward Newton-CG method is very stable compared to the other algorithms considered.
The benefits of the continuation scheme are evident from Figure \ref{fig:l1ls_3}, where the size of the linear system solved by FBN-CG at every iteration is shown.

\begin{figure}[tp!]
\center
\includegraphics[width=0.7\textwidth]{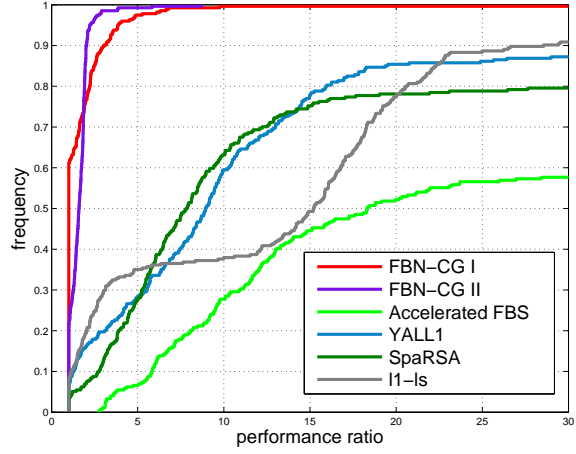}
\caption{$\ell_1$-regularized least squares. Performance profiles of the algorithms on the SPEAR test set with $\lambda_0=10^{-3}\lambda_{\mbox{\tiny max}}$.
The FBN-CG methods considered perform continuation on $\lambda$.}
\label{fig:l1ls_1}
\end{figure}

\begin{figure}[tp!]
\center
\begin{tabular}{cc}
\subfloat[\emph{spear\_inst\_1}]{\includegraphics[width=0.49\textwidth]{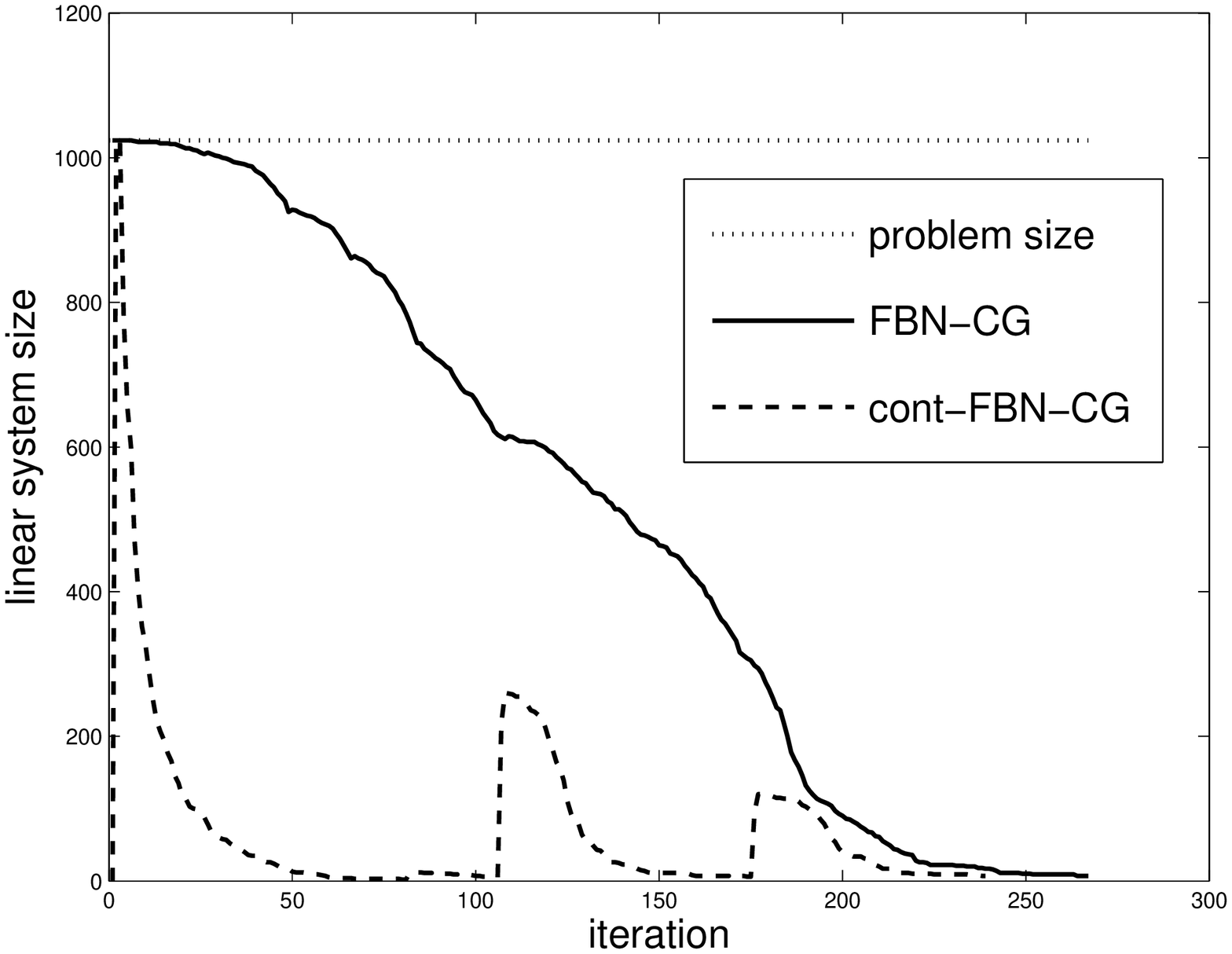}} &
\subfloat[\emph{spear\_inst\_91}]{\includegraphics[width=0.49\textwidth]{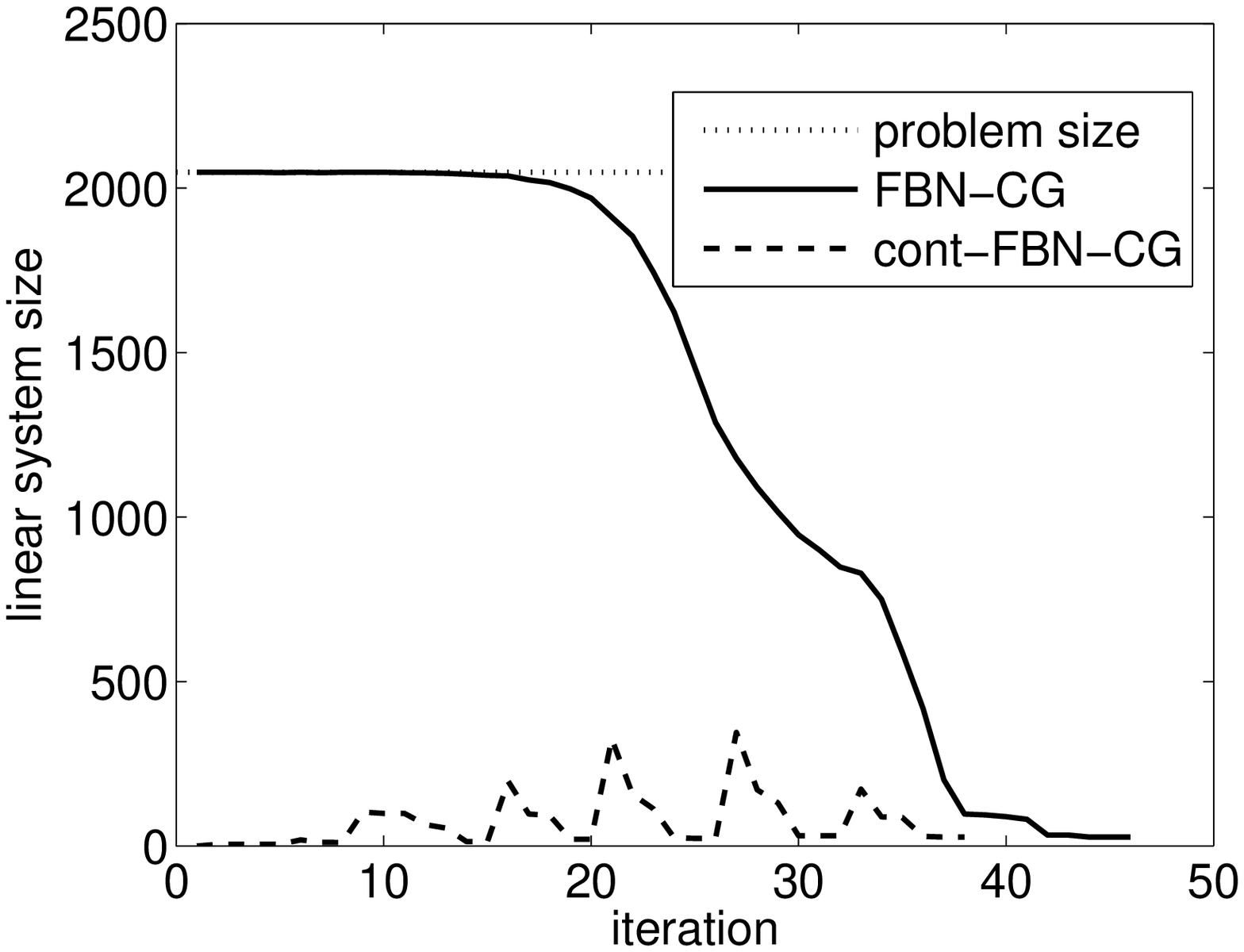}} \\
\subfloat[\emph{spear\_inst\_131}]{\includegraphics[width=0.49\textwidth]{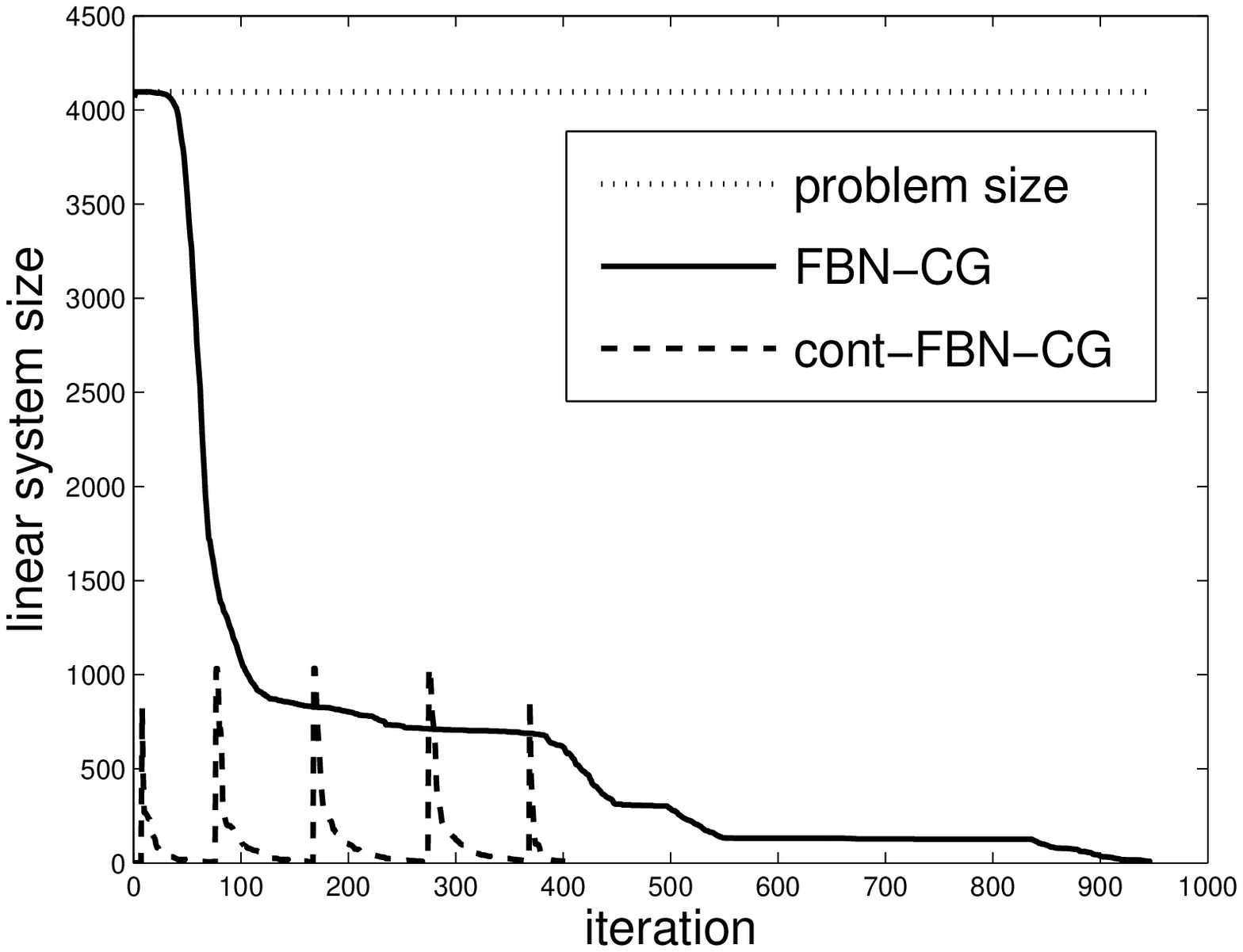}} &
\subfloat[\emph{spear\_inst\_151}]{\includegraphics[width=0.49\textwidth]{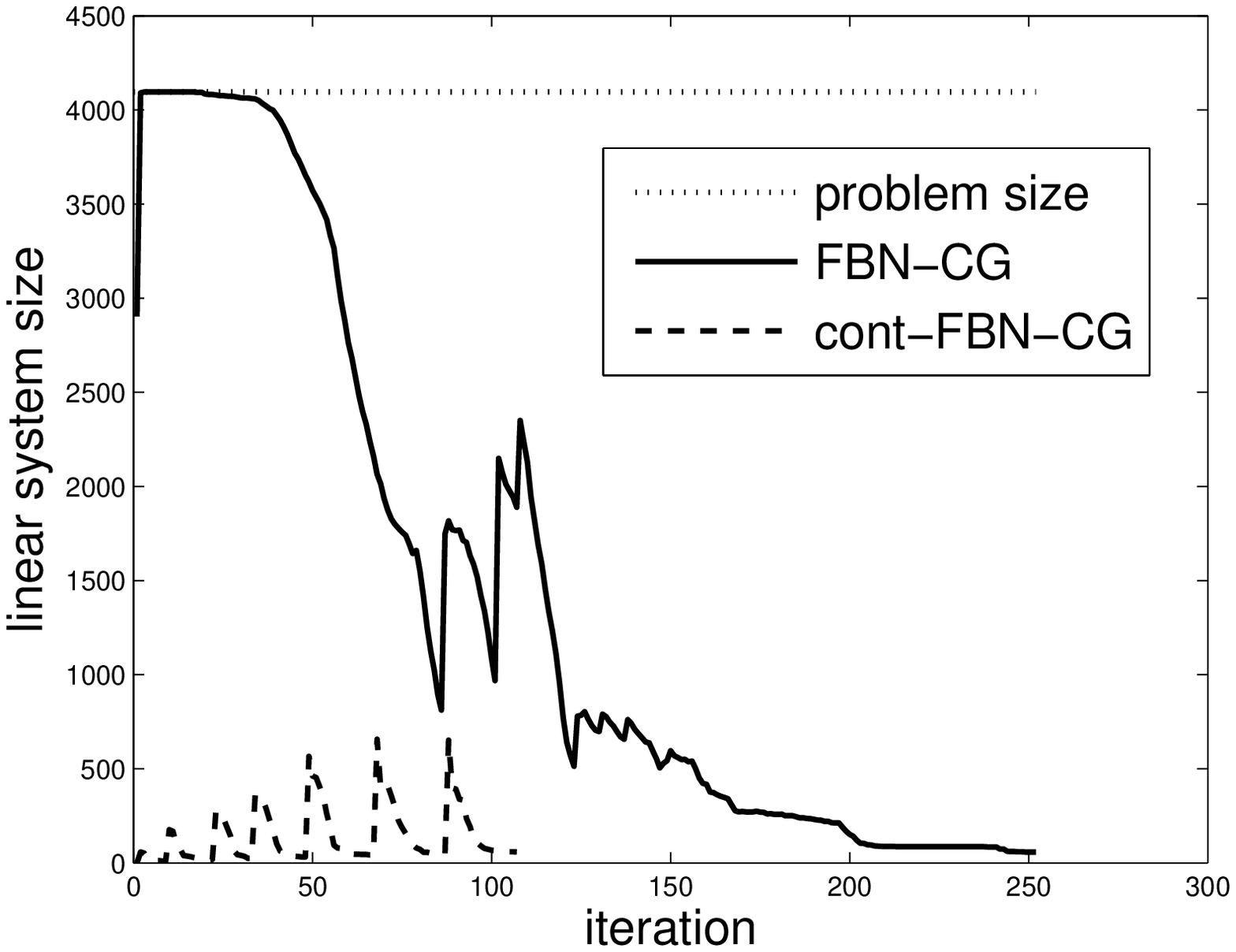}}
\end{tabular}
\caption{$\ell_1$-regularized least squares. Size of the linear system solved, by FBN-CG with and without warm-starting, compared to the full problem size.}
\label{fig:l1ls_3}
\end{figure}


\subsection{\texorpdfstring{$\ell_1$}{l1}-regularized logistic regression}
This is another example of sparse fitting problem, although here the solution is used to perform binary classification.
The composite objective function consists of
$$ f(x) = \sum_{i=1}^m\log(1+e^{-a_{i}'x}),\qquad g(x) = \lambda\|x_{[n-1]}\|_1,$$
and again the $\ell_1$-regularization enforces sparsity in the solution.
We have
$$(\prox_{\gamma g}(x))_i = \begin{cases}(\sign(x_i)(|x_i|-\lambda\gamma)_{+})_i, & i = 1,\ldots,n+1, \\ x_i & i = n.\end{cases}$$
Let $A\in\Re^{m\times n}$ be the feature matrix with rows $a_{i}$ having the trailing feature (the \emph{bias} term) equals to one.
If we set $\sigma(x) = (1+e^{-Ax})^{-1}$ and let $\Sigma(x) = \diag(\sigma(x)\circ (1-\sigma(x)))$,
then the Newton system~\eqref{eq:RegNewtSys} is
$$ \begin{bmatrix} I_{|\bar\alpha|} & \\ \gamma A_{\alpha}'\Sigma(x)A_{\bar\alpha} & \gamma A_{\alpha}'\Sigma(x)A_{\alpha} \end{bmatrix} d = P_{\gamma}(x) - x, $$
where this time
\begin{align*}
\alpha &= \left\{i\ \left.\right|\ \left|\left[x-\gamma\nabla f(x)\right]_i\right| > \gamma\lambda\right\}\cup\left\{n\right\},\\
\bar\alpha &= \left\{1,\ldots, n\right\}\setminus \alpha.
\end{align*}
We compared FBN-CG to the accelerated FBS~\cite{nesterov2007gradient}.
A continuation technique, similar to what described for the previous example, is employed in order to keep $|\alpha|$ small.
As in the previous example, an approximate solution to the problem was first
computed by means of extensive runs of one of the methods, and then the algorithms were set to stop
once at a relative distance of $\epsilon_r = 10^{-8}$ from it.
Table~\ref{tbl:l1lr_1} shows
how the methods scale with the number of features $n$,
for sparse random datasets with $m = n/10$ and $\approx 50$ nonzero features
per row. The datasets were generated according to what described
in~\cite[Sec.~4.2]{koh2007interior}. It is apparent how FBN-CG improves with
respect to the accelerated version of forward-backward splitting.

\begin{table}[tp!]
\center
\begin{tabular}{|r|rr|rr|rr|rr|} 
\hline 
    & \multicolumn{2}{c|}{FBN-CG I} & \multicolumn{2}{c|}{FBN-CG II} & \multicolumn{2}{c|}{Accel. FBS} \\
\hline \hline
$n$ & time & iter. & time & iter. & time & iter.\\
\hline 
100 & 0.04 & 51.1 & 0.05 & 57.3 & 0.06 & 292.4\\
215 & 0.05 & 52.8 & 0.06 & 61.0 & 0.11 & 462.1\\
464 & 0.06 & 54.4 & 0.09 & 69.4 & 0.18 & 647.2\\
1000 & 0.08 & 62.2 & 0.12 & 74.4 & 0.33 & 962.3\\
2154 & 0.27 & 98.8 & 0.35 & 108.2 & 0.82 & 1553.2\\
4641 & 0.95 & 151.1 & 0.94 & 142.2 & 3.58 & 2451.3\\
10000 & 2.40 & 217.7 & 2.54 & 207.0 & 9.36 & 3553.6\\
\hline 
\end{tabular} 
\caption{$\ell_1$-regularized logistic regression. Average running time (in seconds) and average number of iterations, for random datasets
with $m = n/10$ and increasing $n$, $\lambda = 1$.}
\label{tbl:l1lr_1}
\end{table}


\subsection{Matrix completion}
We consider the problem of recovering the entries of a matrix, which is known to have small rank, from a sample of them. One may refer to~\cite{candes2009exact} for
a detailed theoretical analysis of the problem.
Since we are now dealing with matrix variables, we conveniently adopt the notation of \emph{vector representation} of the matrix $X$, denoted by $\vec(X)$, 
\ie the $mn$-dimensional vector obtained by stacking the columns of $X$.
The problem is formulated in a composite form as
$$ f(X) = \frac{1}{2}\|\mathcal{A}(X)-b\|^2,\qquad g(X) = \lambda\|X\|_*.$$
The linear mapping $\mathcal{A}:\Re^{m\times n}\to\Re^k$ is represented as a $k$-by-$mn$
matrix $A$ acting on $\vec(X)$. The problem is nothing more than a least squares problem with a nuclear norm regularization term, having
$\nabla f(X) = A'(A\vec(X)-b)$ and $\nabla^2 f(X) = A'A$.
For a matrix completion task, matrix $A$ is a binary matrix that selects $k$ elements from $X$.
Hence $\nabla^2 f(X)$ is actually diagonal, with $k$ nonzero elements:
$$ A'A = \diag(h_1,\ldots,h_{mn}),\quad h_i = \begin{cases}1 & i \mbox{ is selected by } A,\\0 & \mbox{otherwise}.\end{cases}$$
The proximal mapping associated with $g = \|\cdot\|_*$ is the soft-thresholding applied to the singular values of the matrix argument. Its $B$-subdifferential elements
act on $m$-by-$n$ matrices as expressed in \eqref{eq:JacNS}: if we consider, again, vector representations
the linear mapping $P$ is explicitly expressed by some symmetric and positive semi-definite matrix $Q\in\Re^{mn\times mn}$ with eigenvalues in the interval $[0,1]$.
Hence we can express \eqref{eq:RegNewtSys} as follows:
\begin{equation} (G-GQG + \delta I)\vec(D) = -G\vec(X-P_{\gamma}(X)),\label{eq:NewtonSystemMC1}\end{equation}
where $$ G = I-\gamma \nabla^2 f(x) = I_{mn} - \gamma A'A = \diag(g_1,\ldots,g_{mn}), $$
has diagonal elements $1-\gamma$ and $1$. Note however that we don't need to form the system \eqref{eq:NewtonSystemMC1} order compute residuals and carry out CG iterations, as matrix $Q$ is indeed
very large and dense. Instead, one can observe that pre-multiplication of $\vec(D)$ by a diagonal matrix $G$ is equivalent to the Hadamard product $\widehat{G}\circ D$, where
$$ \widehat{G} = \begin{bmatrix}g_1 & g_{m+1} & \cdots & g_{(n-1)m+1} \\ \vdots & \vdots & & \vdots \\ g_m & g_{2m} & \cdots & g_{nm} \end{bmatrix}. $$
Furthermore, with arguments similar to the ones in~\cite{zhao2010newton}, the computational effort needed to evaluate $P$ can be drastically reduced
due to the sparsity pattern of matrices $\Omega_1, \Omega_2, \Omega_3$ in \eqref{eq:JacNS}.
Hence it is convenient to compute residuals according to the following rewriting of \eqref{eq:NewtonSystemMC1}:
\begin{equation} \widehat{G}\circ D-\widehat{G}\circ P(\widehat{G}\circ D) + \delta D = -\widehat{G}\circ(X-P_{\gamma}(X)).\label{eq:NewtonSystemMC2}\end{equation}
Even in this case, as in the previous examples, we can warm start our methods by approximately solving it for $\lambda_{\mbox{\tiny max}} \geq \lambda > \lambda_{0}$ and updating $\lambda$
in a continuation scheme until the final stage in which $\lambda = \lambda_{0}$.

We considered the accelerated proximal gradient
with line search (APGL)~\cite{toh2010accelerated} and the linearized alternating
direction method (LADM)~\cite{yang2013linearized} in performing our tests. Both the methods also
implement continuation on their parameters. Table~\ref{tbl:mc_1} shows the
average performance, in terms of number of iterations and SVD computations,
on random matrices generated according to~\cite{toh2010accelerated}.
FBN-CG always succeeds at finding a low-error solution within a moderate number
of iterations and SVD computations, which is not the case for LADM. Regarding
APGL, it is worth noticing that it takes advantage from different
acceleration techniques for this specific problem, which we have not considered
for our algorithm. The drawback of our method is that at every iteration,
the computation of \eqref{eq:JacNS} requires a full SVD as opposed to a
decomposition in reduced form. Whether this can be avoided, and how this would
affect the overall method, requires further investigation.

\begin{table}
\center
\begin{tabular}{|r||r|r|r|r|r|r|}
\hline
& $m$ ($=n$) & density & iterations  & SVDs & error     \\ 
\hline
\multirow{3}{*}{FBN-CG I} & 100 & 0.56 & 67.3 & 86.2 & 6.89e-04\\
& 200 & 0.35 & 76.8 & 100.3 & 3.56e-04\\
& 500 & 0.20 & 83.8 & 96.8 & 1.92e-04\\
\hline
\multirow{3}{*}{FBN-CG II} & 100 & 0.56 & 54.1 & 126.1 & 6.89e-04\\
& 200 & 0.35 & 65.6 & 153.3 & 3.56e-04\\
& 500 & 0.20 & 71.0 & 151.2 & 1.92e-04\\
\hline
\multirow{3}{*}{APGL} & 100 & 0.56 & 92.4 & 92.4 & 5.94e-04\\
& 200 & 0.35 & 94.9 & 94.9 & 3.56e-04\\
& 500 & 0.20 & 67.3 & 67.3 & 1.92e-04\\
\hline
\multirow{3}{*}{LADM} & 100 & 0.56 & 183.2 & 183.2 & 4.58e-03\\
& 200 & 0.35 & 494.2 & 494.2 & 7.57e-03\\
& 500 & 0.20 & 1000.0 & 1000.0 & 2.70e-02\\
\hline
\end{tabular}
\caption{Matrix completion. Average performance on $10$ randomly generated instances $M$ with $\mbox{rank}(M) = 10$, $\lambda = 10^{-2}$.
The density column refers to the fraction of observed coefficients. APGL and LADM require one SVD per iteration.
The error reported is $\|X-M\|_F/\|M\|_F$, the relative distance $X$, the computed solution, and the original matrix $M$.}
\label{tbl:mc_1}
\end{table}

\section{Conclusions and Future Work}
In this paper we presented a framework, based on the continuously differentiable function~\eqref{eq:Penalty} which we called
\emph{forward-backward envelope (FBE)},
to address a wide class of nonsmooth convex optimization problems in composite form.
Problems of this form arise in many fields such as control, signal and image processing, system identification and machine learning.
Using tools from nonsmooth analysis we derived two algorithms, namely FBN-CG I and II, that are Newton-like methods minimizing the FBE,
for which we proved fast asymptotic convergence rates.
Furthermore, Theorems~\ref{th:ComplPNM},~\ref{th:PGNMbnds1} and~\ref{th:PGNMbnds2} provide global complexity estimates,
making the algorithms also appealing for real-time applications. The considered approach makes it possible to exploit the sparsity patterns of many problems in the vicinity of the solution,
so that the resulting Newton system is usually of small dimension for many significant problems. This also implies that the algorithms can favorably take
advantage of warm-starting techniques. Our computational experience supports the theoretical results, and shows how in some scenarios our method
challenges other well known approaches.

The framework we introduced opens up the possibility of extending many existing and well known algorithms, originally introduced for smooth unconstrained optimization,
to the nonsmooth or constrained case. This is the case for example of Newton methods based on a trust-region approach, as well as quasi-Newton methods.
Future work includes embedding the Newton iterations in accelerated versions of the forward-backward splitting, in order to obtain better global convergence rates.
Finally, the extension of the framework to the nonconvex case (\ie to the case in which the smooth term $f$ in~\eqref{eq:GenProb} is nonconvex) can also be considered
in order to address a wider range of applications.

\bibliographystyle{IEEEtran}

\bibliography{IEEEabrv,NEWTON_CDC_bib}

\begin{thebibliography}{10}
\providecommand{\url}[1]{#1}
\csname url@samestyle\endcsname
\providecommand{\newblock}{\relax}
\providecommand{\bibinfo}[2]{#2}
\providecommand{\BIBentrySTDinterwordspacing}{\spaceskip=0pt\relax}
\providecommand{\BIBentryALTinterwordstretchfactor}{4}
\providecommand{\BIBentryALTinterwordspacing}{\spaceskip=\fontdimen2\font plus
\BIBentryALTinterwordstretchfactor\fontdimen3\font minus
  \fontdimen4\font\relax}
\providecommand{\BIBforeignlanguage}[2]{{%
\expandafter\ifx\csname l@#1\endcsname\relax
\typeout{** WARNING: IEEEtran.bst: No hyphenation pattern has been}%
\typeout{** loaded for the language `#1'. Using the pattern for}%
\typeout{** the default language instead.}%
\else
\language=\csname l@#1\endcsname
\fi
#2}}
\providecommand{\BIBdecl}{\relax}
\BIBdecl

\bibitem{patrinos2013proximal}
P.~Patrinos and A.~Bemporad, ``Proximal {N}ewton methods for convex composite
  optimization,'' in \emph{IEEE Conference on Decision and Control}, 2013, pp.
  2358--2363.

\bibitem{moreau1965proximiteet}
J.-J. Moreau, ``Proximit{\'e} et dualit{\'e} dans un espace {H}ilbertien,''
  \emph{Bull. Soc. Math. France}, vol.~93, pp. 273--299, 1965.

\bibitem{lan2011primal}
G.~Lan, Z.~Lu, and R.~Monteiro, ``Primal-dual first-order methods with
  $\mathcal{O}(1/ \epsilon)$ iteration-complexity for cone programming,''
  \emph{Mathematical Programming}, vol. 126, no.~1, pp. 1--29, 2011.

\bibitem{lions1979splitting}
P.-L. Lions and B.~Mercier, ``Splitting algorithms for the sum of two nonlinear
  operators,'' \emph{SIAM Journal on Numerical Analysis}, vol.~16, no.~6, pp.
  964--979, 1979.

\bibitem{combettes2011proximal}
P.~L. Combettes and J.-C. Pesquet, ``Proximal splitting methods in signal
  processing,'' \emph{Fixed-Point Algorithms for Inverse Problems in Science
  and Engineering}, pp. 185--212, 2011.

\bibitem{nesterov2007gradient}
Y.~Nesterov, ``Gradient methods for minimizing composite functions,''
  \emph{Mathematical Programming}, vol. 140, no.~1, pp. 125--161, 2013.

\bibitem{beck2009fast}
A.~Beck and M.~Teboulle, ``A fast iterative shrinkage-thresholding algorithm
  for linear inverse problems,'' \emph{SIAM Journal on Imaging Sciences},
  vol.~2, no.~1, pp. 183--202, 2009.

\bibitem{tseng2008accelerated}
P.~Tseng, ``On accelerated proximal gradient methods for convex-concave
  optimization,'' Department of Mathematics, University of Washington, Tech.
  Rep., 2008.

\bibitem{becker2012quasi}
S.~Becker and M.~J. Fadili, ``A quasi-{N}ewton proximal splitting method,'' in
  \emph{Advances in Neural Information Processing Systems 25}, P.~Bartlett,
  F.~Pereira, C.~Burges, L.~Bottou, and K.~Weinberger, Eds., 2012, vol.~1, pp.
  2618--2626.

\bibitem{Lee2012ProximalNIPS}
J.~Lee, Y.~Sun, and M.~Saunders, ``Proximal {N}ewton-type methods for convex
  optimization,'' in \emph{Advances in Neural Information Processing Systems
  25}, P.~Bartlett, F.~Pereira, C.~Burges, L.~Bottou, and K.~Weinberger, Eds.,
  2012, vol.~1, pp. 827--835.

\bibitem{fukushima1992equivalent}
M.~Fukushima, ``Equivalent differentiable optimization problems and descent
  methods for asymmetric variational inequality problems,'' \emph{Mathematical
  programming}, vol.~53, no.~1, pp. 99--110, 1992.

\bibitem{yamashita1997unconstrained}
N.~Yamashita, K.~Taji, and M.~Fukushima, ``Unconstrained optimization
  reformulations of variational inequality problems,'' \emph{Journal of
  Optimization Theory and Applications}, vol.~92, no.~3, pp. 439--456, 1997.

\bibitem{facchinei2003finite}
F.~Facchinei and J.-S. Pang, \emph{Finite-dimensional variational inequalities
  and complementarity problems}.\hskip 1em plus 0.5em minus 0.4em\relax
  Springer, 2003, vol.~II.

\bibitem{Li2007exact}
W.~Li and J.~Peng, ``Exact penalty functions for constrained minimization
  problems via regularized gap function for variational inequalities,''
  \emph{Journal of Global Optimization}, vol.~37, pp. 85--94, 2007.

\bibitem{patrinos2011global}
P.~Patrinos, P.~Sopasakis, and H.~Sarimveis, ``A global piecewise smooth
  {N}ewton method for fast large-scale model predictive control,''
  \emph{Automatica}, vol.~47, pp. 2016--2022, 2011.

\bibitem{bauschke2011convex}
H.~H. Bauschke and P.~L. Combettes, \emph{Convex analysis and monotone operator
  theory in Hilbert spaces}.\hskip 1em plus 0.5em minus 0.4em\relax Springer,
  2011.

\bibitem{rockafellar2011variational}
R.~T. Rockafellar and R.~J.-B. Wets, \emph{Variational analysis}.\hskip 1em
  plus 0.5em minus 0.4em\relax Springer, 2011, vol. 317.

\bibitem{combettes2005signal}
P.~L. Combettes and V.~R. Wajs, ``Signal recovery by proximal forward-backward
  splitting,'' \emph{Multiscale Modeling \& Simulation}, vol.~4, no.~4, pp.
  1168--1200, 2005.

\bibitem{nesterov2003introductory}
Y.~Nesterov, \emph{Introductory lectures on convex optimization: A basic
  course}.\hskip 1em plus 0.5em minus 0.4em\relax Springer, 2003, vol.~87.

\bibitem{bertsekas1999nonlinear}
D.~Bertsekas, \emph{Nonlinear programming}.\hskip 1em plus 0.5em minus
  0.4em\relax Athena Scientific, 1999.

\bibitem{rockafellar1976monotone}
R.~T. Rockafellar, ``Monotone operators and the proximal point algorithm,''
  \emph{SIAM Journal on Control and Optimization}, vol.~14, no.~5, pp.
  877--898, 1976.

\bibitem{fukushima1996globally}
M.~Fukushima and L.~Qi, ``A globally and superlinearly convergent algorithm for
  nonsmooth convex minimization,'' \emph{SIAM Journal on Optimization}, vol.~6,
  no.~4, pp. 1106--1120, 1996.

\bibitem{bonnans1995family}
J.~F. Bonnans, J.~C. Gilbert, C.~Lemar{\'e}chal, and C.~A. Sagastiz{\'a}bal,
  ``A family of variable metric proximal methods,'' \emph{Mathematical
  Programming}, vol.~68, no. 1-3, pp. 15--47, 1995.

\bibitem{mifflin1998quasi}
R.~Mifflin, D.~Sun, and L.~Qi, ``Quasi-{N}ewton bundle-type methods for
  nondifferentiable convex optimization,'' \emph{SIAM Journal on Optimization},
  vol.~8, no.~2, pp. 583--603, 1998.

\bibitem{sagara2005trust}
N.~Sagara and M.~Fukushima, ``A trust region method for nonsmooth convex
  optimization,'' \emph{Management}, vol.~1, no.~2, pp. 171--180, 2005.

\bibitem{lemarechal1997practical}
C.~Lemar{\'e}chal and C.~Sagastiz{\'a}bal, ``Practical aspects of the
  {M}oreau--{Y}osida regularization: Theoretical preliminaries,'' \emph{SIAM
  Journal on Optimization}, vol.~7, no.~2, pp. 367--385, 1997.

\bibitem{clarke1990optimization}
F.~Clarke, \emph{Optimization and nonsmooth analysis}.\hskip 1em plus 0.5em
  minus 0.4em\relax New York: Wiley, 1983.

\bibitem{qi1993nonsmooth}
L.~Qi and J.~Sun, ``A nonsmooth version of {N}ewton's method,''
  \emph{Mathematical programming}, vol.~58, no. 1-3, pp. 353--367, 1993.

\bibitem{mifflin1977semismooth}
R.~Mifflin, ``Semismooth and semiconvex functions in constrained
  optimization,'' \emph{SIAM Journal on Control and Optimization}, vol.~15,
  no.~6, pp. 959--972, 1977.

\bibitem{gowda2004inverse}
M.~S. Gowda, ``Inverse and implicit function theorems for {H}-differentiable
  and semismooth functions,'' \emph{Optimization Methods and Software},
  vol.~19, no.~5, pp. 443--461, 2004.

\bibitem{scholtes2012introduction}
S.~Scholtes, \emph{Introduction to piecewise differentiable equations}.\hskip
  1em plus 0.5em minus 0.4em\relax Springer, 2012.

\bibitem{sun2002semismooth}
D.~Sun and J.~Sun, ``Semismooth matrix-valued functions,'' \emph{Mathematics of
  Operations Research}, vol.~27, no.~1, pp. 150--169, 2002.

\bibitem{mifflin1999properties}
R.~Mifflin, L.~Qi, and D.~Sun, ``Properties of the {M}oreau-{Y}osida
  regularization of a piecewise ${C}^2$ convex function,'' \emph{Mathematical
  programming}, vol.~84, no.~2, pp. 269--281, 1999.

\bibitem{meng2008lagrangian}
F.~Meng, G.~Zhao, M.~Goh, and R.~De~Souza, ``{L}agrangian-dual functions and
  {M}oreau-{Y}osida regularization,'' \emph{SIAM Journal on Optimization},
  vol.~19, no.~1, pp. 39--61, 2008.

\bibitem{meng2009moreau}
F.~Meng, ``{M}oreau--{Y}osida regularization of {L}agrangian-dual functions for
  a class of convex optimization problems,'' \emph{Journal of Global
  Optimization}, vol.~44, no.~3, pp. 375--394, 2009.

\bibitem{meng2005semismoothness}
F.~Meng, D.~Sun, and G.~Zhao, ``Semismoothness of solutions to generalized
  equations and the {M}oreau-{Y}osida regularization,'' \emph{Mathematical
  programming}, vol. 104, no.~2, pp. 561--581, 2005.

\bibitem{sun1997computable}
D.~Sun, M.~Fukushima, and L.~Qi, ``A computable generalized {H}essian of the
  {D}-gap function and {N}ewton-type methods for variational inequality
  problems,'' in \emph{Complementarity and Variational Problems: State of the
  Art}, M.~Ferris and J.~Pang, Eds.\hskip 1em plus 0.5em minus 0.4em\relax SIAM
  Publications, 1997, pp. 452--473.

\bibitem{dembo1983truncated}
R.~S. Dembo and T.~Steihaug, ``Truncated-{N}ewton algorithms for large-scale
  unconstrained optimization,'' \emph{Mathematical Programming}, vol.~26,
  no.~2, pp. 190--212, 1983.

\bibitem{facchinei1995minimization}
F.~Facchinei, ``Minimization of ${SC}^1$ functions and the {M}aratos effect,''
  \emph{Operations Research Letters}, vol.~17, no.~3, pp. 131--137, 1995.

\bibitem{beck2010gradient}
A.~Beck and M.~Teboulle, ``Gradient-based algorithms with applications to
  signal recovery problems,'' in \emph{Convex Optimization in Signal Processing
  and Communications}, D.~Palomar and Y.~Eldar, Eds.\hskip 1em plus 0.5em minus
  0.4em\relax Cambridge University Press, 2010, pp. 42--88.

\bibitem{kanzow2009local}
C.~Kanzow, I.~Ferenczi, and M.~Fukushima, ``On the local convergence of
  semismooth {N}ewton methods for linear and nonlinear second-order cone
  programs without strict complementarity,'' \emph{SIAM Journal on
  Optimization}, vol.~20, no.~1, pp. 297--320, 2009.

\bibitem{lewis1996convex}
A.~S. Lewis, ``Convex analysis on the {H}ermitian matrices,'' \emph{SIAM
  Journal on Optimization}, vol.~6, no.~1, pp. 164--177, 1996.

\bibitem{lewis1996derivatives}
------, ``Derivatives of spectral functions,'' \emph{Mathematics of Operations
  Research}, vol.~21, no.~3, pp. 576--588, 1996.

\bibitem{lewis2001twice}
A.~S. Lewis and H.~S. Sendov, ``Twice differentiable spectral functions,''
  \emph{SIAM Journal on Matrix Analysis and Applications}, vol.~23, no.~2, pp.
  368--386, 2001.

\bibitem{parikh2013proximal}
N.~Parikh and S.~Boyd, ``Proximal algorithms,'' \emph{Foundations and Trends in
  Optimization}, pp. 1--96, 2013.

\bibitem{bhatia1997matrix}
R.~Bhatia, \emph{Matrix analysis}.\hskip 1em plus 0.5em minus 0.4em\relax
  Springer, 1997, vol. 169.

\bibitem{horn1991topics}
R.~A. Horn, \emph{Topics in matrix analysis}.\hskip 1em plus 0.5em minus
  0.4em\relax Cambridge university press, 1991.

\bibitem{chen2003analysis}
X.~Chen, H.~Qi, and P.~Tseng, ``Analysis of nonsmooth symmetric-matrix-valued
  functions with applications to semidefinite complementarity problems,''
  \emph{SIAM Journal on Optimization}, vol.~13, no.~4, pp. 960--985, 2003.

\bibitem{zhao2010newton}
X.-Y. Zhao, D.~Sun, and K.-C. Toh, ``A {N}ewton-{CG} augmented {L}agrangian
  method for semidefinite programming,'' \emph{SIAM Journal on Optimization},
  vol.~20, no.~4, pp. 1737--1765, 2010.

\bibitem{lewis1995convex}
A.~S. Lewis, ``The convex analysis of unitarily invariant matrix functions,''
  \emph{Journal of Convex Analysis}, vol.~2, no.~1, pp. 173--183, 1995.

\bibitem{yang2009study}
Z.~Yang, ``A study on nonsymmetric matrix-valued functions,'' Master's thesis,
  Department of Mathematics, National University of Singapore, 2009.

\bibitem{fazel2001rank}
M.~Fazel, H.~Hindi, and S.~P. Boyd, ``A rank minimization heuristic with
  application to minimum order system approximation,'' in \emph{American
  Control Conference. Proceedings of the 2001}, vol.~6.\hskip 1em plus 0.5em
  minus 0.4em\relax IEEE, 2001, pp. 4734--4739.

\bibitem{fazel2002matrix}
M.~Fazel, ``Matrix rank minimization with applications,'' Ph.D. dissertation,
  Stanford University, 2002.

\bibitem{fazel2004rank}
M.~Fazel, H.~Hindi, and S.~Boyd, ``Rank minimization and applications in system
  theory,'' in \emph{American Control Conference. Proceedings of the 2004},
  vol.~4.\hskip 1em plus 0.5em minus 0.4em\relax IEEE, 2004, pp. 3273--3278.

\bibitem{liu2009interior}
Z.~Liu and L.~Vandenberghe, ``Interior-point method for nuclear norm
  approximation with application to system identification,'' \emph{SIAM Journal
  on Matrix Analysis and Applications}, vol.~31, no.~3, pp. 1235--1256, 2009.

\bibitem{recht2010guaranteed}
B.~Recht, M.~Fazel, and P.~A. Parrilo, ``Guaranteed minimum-rank solutions of
  linear matrix equations via nuclear norm minimization,'' \emph{SIAM review},
  vol.~52, no.~3, pp. 471--501, 2010.

\bibitem{srebro2004learning}
N.~Srebro, ``Learning with matrix factorizations,'' Ph.D. dissertation,
  Massachusetts Institute of Technology, 2004.

\bibitem{rennie2005fast}
J.~D. Rennie and N.~Srebro, ``Fast maximum margin matrix factorization for
  collaborative prediction,'' in \emph{Proceedings of the 22nd international
  conference on Machine learning}.\hskip 1em plus 0.5em minus 0.4em\relax ACM,
  2005, pp. 713--719.

\bibitem{tomasi1992shape}
C.~Tomasi and T.~Kanade, ``Shape and motion from image streams under
  orthography: a factorization method,'' \emph{International Journal of
  Computer Vision}, vol.~9, no.~2, pp. 137--154, 1992.

\bibitem{morita1997sequential}
T.~Morita and T.~Kanade, ``A sequential factorization method for recovering
  shape and motion from image streams,'' \emph{IEEE Transactions on Pattern
  Analysis and Machine Intelligence}, vol.~19, no.~8, pp. 858--867, 1997.

\bibitem{elden2007matrix}
L.~Eld{\'e}n, \emph{Matrix methods in data mining and pattern
  recognition}.\hskip 1em plus 0.5em minus 0.4em\relax SIAM, 2007.

\bibitem{boyd2011distributed}
S.~Boyd, N.~Parikh, E.~Chu, B.~Peleato, and J.~Eckstein, ``Distributed
  optimization and statistical learning via the alternating direction method of
  multipliers,'' \emph{Foundations and Trends{\textregistered} in Machine
  Learning}, vol.~3, no.~1, pp. 1--122, 2011.

\bibitem{gonzaga2013optimal}
C.~C. Gonzaga, E.~W. Karas, and D.~R. Rossetto, ``An optimal algorithm for
  constrained differentiable convex optimization,'' \emph{{SIAM} Journal on
  Optimization}, vol.~23, no.~4, pp. 1939--1955, 2013.

\bibitem{xiao2013proximal}
L.~Xiao and T.~Zhang, ``A proximal-gradient homotopy method for the sparse
  least-squares problem,'' \emph{{SIAM} Journal on Optimization}, vol.~23,
  no.~2, pp. 1062--1091, 2013.

\bibitem{wright2009sparse}
S.~J. Wright, R.~D. Nowak, and M.~A. Figueiredo, ``Sparse reconstruction by
  separable approximation,'' \emph{Signal Processing, IEEE Transactions on},
  vol.~57, no.~7, pp. 2479--2493, 2009.

\bibitem{yang2011alternating}
J.~Yang and Y.~Zhang, ``Alternating direction algorithms for $\ell_1$-problems
  in compressive sensing,'' \emph{{SIAM} Journal on Scientific Computing},
  vol.~33, no.~1, pp. 250--278, 2011.

\bibitem{koh2007interior}
K.~Koh, S.-J. Kim, and S.~P. Boyd, ``An interior-point method for large-scale
  $\ell_1$-regularized logistic regression.'' \emph{Journal of Machine learning
  research}, vol.~8, no.~8, pp. 1519--1555, 2007.

\bibitem{lorenz2013constructing}
D.~Lorenz, ``Constructing test instances for basis pursuit denoising,''
  \emph{{IEEE} Transactions on Signal Processing}, vol.~61, no.~5, pp.
  1210--1214, 2013.

\bibitem{dolan2012benchmarking}
E.~D. Dolan and J.~J. Mor{\'e}, ``\BIBforeignlanguage{en}{Benchmarking
  optimization software with performance profiles},''
  \emph{\BIBforeignlanguage{en}{Mathematical Programming}}, vol.~91, no.~2, pp.
  201--213, Jan. 2002.

\bibitem{candes2009exact}
E.~J. Cand{\`e}s and B.~Recht, ``Exact matrix completion via convex
  optimization,'' \emph{Foundations of Computational Mathematics}, vol.~9,
  no.~6, pp. 717--772, 2009.

\bibitem{toh2010accelerated}
K.-C. Toh and S.~Yun, ``An accelerated proximal gradient algorithm for nuclear
  norm regularized linear least squares problems,'' \emph{Pacific Journal of
  Optimization}, vol.~6, no.~3, pp. 615--640, 2010.

\bibitem{yang2013linearized}
J.~Yang and X.~Yuan, ``Linearized augmented {L}agrangian and alternating
  direction methods for nuclear norm minimization,'' \emph{Mathematics of
  Computation}, vol.~82, no. 281, pp. 301--329, 2013.

\end{thebibliography}

\appendix

\iftoggle{svver}{\section*{Appendix}\label{ap:AppA}}{\section{}\label{ap:AppA}}
We provide results that are used throughout the paper and all the
omitted proofs. The following result is useful in bounding the eigenvalues of the linear
Newton approximation of $F_\gamma$, and is required by Theorem~\ref{th:ProxPropQuad} and Proposition~\ref{prop:PSDHess}.
\begin{lemma}\label{lem:eigen}
If $Q\in\Ss_+^n$ and $\mu_f=\lambda_{\min}(Q)$, $L_f=\lambda_{\max}(Q)$ then 
\[
\lambda_{\min}(Q(I-\gamma Q))=\begin{cases}
\mu_f(1-\gamma\mu_f),&\mathrm{ if }\ 0<\gamma\leq 1/(L_f+\mu_f),\\
L_f(1-\gamma L_f),&\mathrm{ if }\ 1/(L_f+\mu_f)\leq\gamma< 1/L_f.
\end{cases}
\]
\end{lemma}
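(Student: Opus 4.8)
The plan is to reduce this matrix eigenvalue problem to a one-dimensional optimization over the spectrum of $Q$. Since $Q\in\Ss_+^n$ is symmetric, it admits a spectral decomposition $Q=U\Lambda U'$ with $U$ orthogonal and $\Lambda=\diag(\lambda_1,\dots,\lambda_n)$, where each $\lambda_i\in[\mu_f,L_f]$ and both endpoints $\mu_f=\lambda_{\min}(Q)$ and $L_f=\lambda_{\max}(Q)$ are attained. The matrix $Q(I-\gamma Q)$ is a polynomial in $Q$, hence shares the eigenvectors of $Q$, and its eigenvalues are exactly $\phi(\lambda_i)$, where $\phi(t)\eqdef t(1-\gamma t)=t-\gamma t^2$. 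Therefore $\lambda_{\min}(Q(I-\gamma Q))=\min_{1\le i\le n}\phi(\lambda_i)$, and the task becomes that of minimizing the scalar function $\phi$ over the finite set of eigenvalues of $Q$.

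The key observation is that, for $\gamma>0$, the map $\phi$ is a downward parabola, hence strictly concave on $\Re$. A concave function attains its minimum over any compact interval at one of the endpoints; consequently $\min_{t\in[\mu_f,L_f]}\phi(t)=\min\{\phi(\mu_f),\phi(L_f)\}$. Since every $\lambda_i$ lies in $[\mu_f,L_f]$ we get $\phi(\lambda_i)\ge\min\{\phi(\mu_f),\phi(L_f)\}$, while the reverse inequality holds because $\mu_f$ and $L_f$ are themselves eigenvalues of $Q$. This yields $\lambda_{\min}(Q(I-\gamma Q))=\min\{\mu_f(1-\gamma\mu_f),\,L_f(1-\gamma L_f)\}$.

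It remains to decide which of the two candidate values is the smaller one as a function of $\gamma$. Assuming $\mu_f<L_f$ (the case $\mu_f=L_f$ being trivial, as the two branches then coincide), I would form the difference $\phi(\mu_f)-\phi(L_f)=(\mu_f-L_f)\bigl(1-\gamma(\mu_f+L_f)\bigr)$ and note that $\mu_f-L_f<0$. Hence $\phi(\mu_f)\le\phi(L_f)$ precisely when $1-\gamma(\mu_f+L_f)\ge 0$, i.e.\ when $\gamma\le 1/(L_f+\mu_f)$, and the inequality reverses for $\gamma\ge 1/(L_f+\mu_f)$; substituting into the expression for $\lambda_{\min}$ gives the two stated cases. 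There is no genuine obstacle here: the only points requiring care are the sign reversal when dividing by the negative quantity $\mu_f-L_f$, and a sanity check of the degenerate case $\mu_f=0$, where the second $\gamma$-range becomes empty and the value correctly collapses to $0=\mu_f(1-\gamma\mu_f)$.
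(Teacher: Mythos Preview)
Your proof is correct and follows essentially the same approach as the paper: diagonalize $Q$ to reduce the problem to minimizing the concave scalar function $\phi(\lambda)=\lambda(1-\gamma\lambda)$ over $[\mu_f,L_f]$, observe that the minimum is attained at an endpoint, and compare $\phi(\mu_f)$ with $\phi(L_f)$. The only difference is cosmetic---you factor $\phi(\mu_f)-\phi(L_f)$ explicitly and treat the degenerate cases $\mu_f=L_f$ and $\mu_f=0$, whereas the paper simply states the equivalence $\mu_f(1-\gamma\mu_f)\le L_f(1-\gamma L_f)\Leftrightarrow\gamma\in(0,1/(L_f+\mu_f))$.
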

\begin{proof}
Since $Q$ is symmetric positive semidefinite, there exists an invertible matrix $S\in\Rr^{n\times n}$ such that $Q=SJS^{-1}$, where $J=\diag(\lambda_1(Q),\ldots,\lambda_n(Q))$.
Therefore,
\begin{align*}
Q(I-\gamma Q)&=SJS^{-1}(I-\gamma SJS^{-1})\\
&=SJS^{-1} S(I-\gamma J)S^{-1}\\
&=SJ(I-\gamma J)S^{-1},
\end{align*}
and the eigenvalues of $Q(I-\gamma Q)$ are exactly 
$$\lambda_1(Q)(1-\gamma\lambda_1(Q)),\ldots,\lambda_n(Q)(1-\gamma\lambda_n(Q)).$$
Next, consider the minimization problem $\min_{\lambda\in [\mu_f,L_f]}\phi(\lambda)\eqdef\lambda(1-\gamma\lambda)$.
Since $\gamma$ is positive, $\phi$ is concave and the minimum is attained either at $\mu_f$ or $L_f$. The proof finishes by noticing that
$$\mu_f(1-\gamma\mu_f)\leq L_f(1-\gamma L_f) \Leftrightarrow \gamma\in\left(0,1/(L_f+\mu_f)\right).$$
\iftoggle{svver}{\qed}{}
\end{proof}

The next result gives condition for the Lipschitz-continuity of $P_\gamma$ and $G_\gamma$,
and is needed by Theorem~\ref{th:ProxPropQuad} to obtain the Lipschitz constant of
$\nabla F_\gamma$ in the case where $f$ is quadratic, and by Theorem~\ref{eq:PNMconvRate}
and~\ref{eq:PGNMconvRate} in order to assess the local convergence properties of
Algorithm~\ref{al:PNM} and~\ref{al:PGNM}.
\begin{lemma}\label{le:zNonExp}
Suppose that $\gamma<1/L_f$. Then $P_\gamma:\Re^n\to\Re^n$ is nonexpansive, i.e.,
\begin{equation}\label{eq:Pnonexp}
\|P_{\gamma}(x)-P_{\gamma}(y)\|\leq\|x-y\|,
\end{equation}
and $G_\gamma:\Re^n\to\Re^n$ is $(2/\gamma)$-Lipschitz continuous, i.e.,
\begin{equation}
\|G_{\gamma}(x)-G_{\gamma}(y)\|\leq2/\gamma\|x-y\|.
\end{equation}
\end{lemma}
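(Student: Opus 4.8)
The plan is to exploit the factorization $P_\gamma=\prox_{\gamma g}\circ T$, where $T(x)=x-\gamma\nabla f(x)$, together with the nonexpansiveness of $\prox_{\gamma g}$ recalled in Section~\ref{sec:FBE}. First I would reduce the claim about $P_\gamma$ to a contraction estimate on $T$: since $\prox_{\gamma g}$ is nonexpansive,
\[
\|P_\gamma(x)-P_\gamma(y)\|=\|\prox_{\gamma g}(T(x))-\prox_{\gamma g}(T(y))\|\leq\|T(x)-T(y)\|,
\]
so it suffices to bound $\|T(x)-T(y)\|$ by $\|x-y\|$.

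The key step is to write $\nabla f(x)-\nabla f(y)=M(x-y)$ with the averaged Hessian $M=\int_0^1\nabla^2 f\bigl(y+t(x-y)\bigr)\,dt$, which is legitimate because $f\in\mathcal{S}^{2,1}_{\mu_f,L_f}(\Re^n)$ is twice continuously differentiable. Since strong convexity and Lipschitz continuity of $\nabla f$ give $\mu_f I\preceq\nabla^2 f(z)\preceq L_f I$ for every $z$, the matrix $M$ is symmetric with eigenvalues in $[\mu_f,L_f]$. Hence $T(x)-T(y)=(I-\gamma M)(x-y)$ with $I-\gamma M$ symmetric and its eigenvalues lying in $[1-\gamma L_f,\,1-\gamma\mu_f]$. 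The hypothesis $\gamma<1/L_f$ guarantees $1-\gamma L_f>0$, so $I-\gamma M$ is positive definite and $\|I-\gamma M\|=1-\gamma\mu_f\leq 1$. Combining these yields $\|T(x)-T(y)\|\leq(1-\gamma\mu_f)\|x-y\|\leq\|x-y\|$, which proves~\eqref{eq:Pnonexp}. This is exactly the observation already used in Section~\ref{sec:FBE} to justify that $I-\gamma\nabla^2 f(x)$ is positive definite for $\gamma\in(0,1/L_f)$.

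For $G_\gamma$ I would simply use its defining relation $G_\gamma(x)=\gamma^{-1}(x-P_\gamma(x))$ together with the estimate just obtained. Writing
\[
G_\gamma(x)-G_\gamma(y)=\gamma^{-1}\bigl[(x-y)-(P_\gamma(x)-P_\gamma(y))\bigr],
\]
and applying the triangle inequality followed by the nonexpansiveness of $P_\gamma$ gives
\[
\|G_\gamma(x)-G_\gamma(y)\|\leq\gamma^{-1}\bigl(\|x-y\|+\|P_\gamma(x)-P_\gamma(y)\|\bigr)\leq 2\gamma^{-1}\|x-y\|.
\]

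There is no serious obstacle in this argument; the only point requiring a little care is the averaged-Hessian representation and the verification that $\gamma<1/L_f$ is precisely what makes $I-\gamma M$ nonexpansive. Everything else follows directly from the already-established nonexpansiveness of the proximal mapping and the definitions of $P_\gamma$ and $G_\gamma$.
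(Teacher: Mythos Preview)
Your argument is correct. The only difference from the paper's proof is in how you establish nonexpansiveness of the forward step $T(x)=x-\gamma\nabla f(x)$. The paper invokes operator-theoretic machinery: $\nabla f$ is $1/L_f$-cocoercive (Baillon--Haddad), so $T$ is $\gamma L_f/2$-averaged, and since $\prox_{\gamma g}$ is $1/2$-averaged, the composition $P_\gamma$ is averaged and therefore nonexpansive. You instead exploit the standing $\mathcal{S}^{2,1}_{\mu_f,L_f}$ assumption directly via the mean-value representation $\nabla f(x)-\nabla f(y)=M(x-y)$ and a spectral bound on $I-\gamma M$. Your route is more elementary and self-contained (no references to averaged operators), and it even yields the slightly sharper contraction factor $1-\gamma\mu_f$ on $T$; the paper's route has the advantage of not needing twice differentiability of $f$, though that generality is not required here. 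The treatment of $G_\gamma$ is identical in both proofs.
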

\begin{proof}
On one hand we know that $\prox_{\gamma g}$ is firmly nonexpansive~\cite{moreau1965proximiteet},
therefore $\prox_{\gamma g}$ is a $1/2$-averaged operator~\cite[Rem. 4.24(iii)]{bauschke2011convex}.
On the other hand, being $\nabla f$ the Lipschitz continuous gradient of a convex function, it is $1/L_f$-cocoercive.
Therefore, since $\gamma<1/L_f$, the operator $x\to x-\gamma\nabla f(x)$ is $\gamma L_f/2$-averaged~\cite[Prop. 4.33]{bauschke2011convex}. Since $P_\gamma$ is the composition of two averaged operators, it is an averaged operator as well~\cite[Prop. 4.32]{bauschke2011convex}. By~\cite[Rem. 4.24(i)]{bauschke2011convex} this implies that $P_\gamma$ is nonexpansive, proving~\eqref{eq:Pnonexp}. Next, consider any $x,y\in\Re^n$:
\begin{align*}
\|G_\gamma(x)-G_\gamma(y)\|&\leq 1/\gamma\|P_\gamma(x)-P_{\gamma}(y)-(x-y)\|\\
&\leq 1/\gamma\left(\|P_\gamma(x)-P_{\gamma}(y)\|+\|x-y\|\right)\\
&\leq 2/\gamma\|x-y\|.
\end{align*}
\iftoggle{svver}{\qed}{}
\end{proof}

The following proposition is an extension of \cite[Lemma 2.3]{beck2009fast}
that handles the case where $f$ can be strongly convex.
\begin{proposition}\label{prop:ProxBasic}
For any  $\gamma\in (0,1/L_f]$, $x\in\Re^n$, $\bar{x}\in\Re^n$
\begin{equation}\label{eq:ProxBasic}
F(x)\geq F(P_\gamma(\bar{x}))+G_{\gamma}(\bar{x})'(x-\bar{x})+\tfrac{\gamma}{2}\|G_{\gamma}(\bar{x})\|^2+\tfrac{\mu_f}{2}\|x-\bar{x}\|^2.\nonumber
\end{equation}
\end{proposition}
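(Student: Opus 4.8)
The plan is to assemble the bound from three elementary inequalities and to let the resulting first-order terms cancel. Write $p\eqdef P_\gamma(\bar x)$ and $G\eqdef G_\gamma(\bar x)$, so that $\bar x-p=\gamma G$ and hence $p-\bar x=-\gamma G$. The first ingredient is the optimality condition for the proximal subproblem, already recorded in the proof of Theorem~\ref{Th:PropFg}, namely $G-\nabla f(\bar x)\in\partial g(p)$. The second is strong convexity of $f$, which gives $f(x)\geq f(\bar x)+\nabla f(\bar x)'(x-\bar x)+\tfrac{\mu_f}{2}\|x-\bar x\|^2$. The third is the descent lemma applied at $\bar x$, which yields $f(\bar x)\geq f(p)-\nabla f(\bar x)'(p-\bar x)-\tfrac{L_f}{2}\|p-\bar x\|^2$.

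First I would add the strong-convexity inequality for $f$ to the subgradient inequality $g(x)\geq g(p)+(G-\nabla f(\bar x))'(x-p)$ coming from the first ingredient, producing a lower bound on $F(x)=f(x)+g(x)$ in terms of $f(\bar x)$, $g(p)$, a factor $G'(x-p)$, and several inner products with $\nabla f(\bar x)$. Next I would substitute the descent-lemma bound to replace $f(\bar x)$ by $f(p)$, so that $f(p)+g(p)=F(p)$ emerges. The key simplification is that the three remaining terms linear in $\nabla f(\bar x)$, coming respectively from $\nabla f(\bar x)'(x-\bar x)$, from $-\nabla f(\bar x)'(x-p)$, and from the descent-lemma term $-\nabla f(\bar x)'(p-\bar x)$, telescope to $\nabla f(\bar x)'[(x-\bar x)-(x-p)-(p-\bar x)]=0$. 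This cancellation is the one step worth verifying carefully, and it is what makes the final bound clean.

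What remains is bookkeeping with the quadratic terms in $G$. Using $\|p-\bar x\|^2=\gamma^2\|G\|^2$ and rewriting $G'(x-p)=G'(x-\bar x)+\gamma\|G\|^2$ via $x-p=(x-\bar x)+\gamma G$, the coefficient of $\|G\|^2$ becomes $\gamma-\tfrac{L_f\gamma^2}{2}=\gamma(1-\tfrac{\gamma L_f}{2})$. Here the hypothesis $\gamma\in(0,1/L_f]$ enters: it forces $\gamma L_f\leq 1$, hence $1-\tfrac{\gamma L_f}{2}\geq\tfrac12$, so this coefficient is at least $\tfrac{\gamma}{2}$. This yields exactly the claimed $\tfrac{\gamma}{2}\|G\|^2$ term, while the $\tfrac{\mu_f}{2}\|x-\bar x\|^2$ contribution from strong convexity is carried along untouched. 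The only genuine obstacle is the sign bookkeeping in the telescoping step; once that is in place, the restriction on $\gamma$ does the rest.
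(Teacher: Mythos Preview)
Your proof is correct and uses essentially the same three ingredients as the paper: strong convexity of $f$, the subgradient inclusion $G_\gamma(\bar x)-\nabla f(\bar x)\in\partial g(P_\gamma(\bar x))$, and the descent lemma. The only organizational difference is that the paper routes the descent-lemma step through $F_\gamma$ and Theorem~\ref{Th:PropFg}\eqref{prop:LowBnd}, whereas you apply it directly and observe the telescoping of the $\nabla f(\bar x)$ terms; both arrive at the same coefficient $\gamma(1-\tfrac{\gamma L_f}{2})=\tfrac{\gamma}{2}(2-\gamma L_f)$ before invoking $\gamma L_f\le 1$.
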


\begin{proof}
For any $x\in\Re^n$, $\bar{x}\in\Re^n$ we have
\begin{align*}
F(x)    & \geq f(\bar{x})+\nabla f(\bar{x})'(x-\bar{x})+\tfrac{\mu_f}{2}\|x-\bar{x}\|^2\\
        & \phantom{\geq f(\bar{x})}+g(P_\gamma(\bar{x}))+(G_{\gamma}(\bar{x})-\nabla f(\bar{x}))'(x-P_\gamma(\bar{x}))\\
        & = f(\bar{x})+g(P_\gamma(\bar{x}))-\nabla f(\bar{x})'(\bar{x}-P_\gamma(\bar{x}))+G_{\gamma}(\bar{x})'(x-P_\gamma(\bar{x}))+\tfrac{\mu_f}{2}\|x-\bar{x}\|^2\\
        & = F_\gamma(\bar{x})-\tfrac{\gamma}{2}\|G_\gamma(\bar{x})\|^2+G_{\gamma}(\bar{x})'(\bar{x}-P_{\gamma}(\bar{x}))+G_{\gamma}(\bar{x})'(x-\bar{x})+\tfrac{\mu_f}{2}\|x-\bar{x}\|^2\\
        & = F_\gamma(\bar{x})-\tfrac{\gamma}{2}\|G_\gamma(\bar{x})\|^2+\gamma\|G_{\gamma}(\bar{x})\|^2+G_{\gamma}(\bar{x})'(x-\bar{x})+\tfrac{\mu_f}{2}\|x-\bar{x}\|^2\\
        & \geq F(P_\gamma(\bar{x}))+\tfrac{\gamma}{2}(2-\gamma L_f)\|G_\gamma(\bar{x})\|^2+G_{\gamma}(\bar{x})'(x-\bar{x})+\tfrac{\mu_f}{2}\|x-\bar{x}\|^2.
\end{align*}
The first inequality follows by strong convexity of $f$ and $G_{\gamma}(\bar{x})-\nabla f(\bar{x})\in\partial g(P_{\gamma}(\bar{x}))$, 
the equality by the definition of $F_\gamma$ and the final inequality by Theorem~\ref{Th:PropFg}(\ref{prop:LowBnd}).
The result follows by noticing that $\gamma\in (0,1/L_f]$ implies $2-\gamma L_f\geq 1$.
\iftoggle{svver}{\qed}{}
\end{proof}
An immediate result of Proposition~\ref{prop:ProxBasic} is the following.
\begin{corollary}\label{prop:GradLowBnd}
For any  $\gamma\in (0,1/L_f]$,  $x\in\Re^n$, it holds
\[
\|G_{\gamma}(x)\|^2\geq 2\mu_f(F(P_{\gamma}(x))-F_\star).
\]
\end{corollary}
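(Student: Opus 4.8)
The plan is to derive the corollary directly from Proposition~\ref{prop:ProxBasic} by a minimization argument, exploiting the strong-convexity term $\tfrac{\mu_f}{2}\|x-\bar{x}\|^2$ that distinguishes it from the classical estimate of~\cite{beck2009fast}. First I would dispose of the degenerate case $\mu_f=0$: here $2\mu_f(F(P_\gamma(x))-F_\star)=0$, while $\|G_\gamma(x)\|^2\geq 0$ and $F(P_\gamma(x))\geq F_\star$ by optimality of $F_\star$, so the claim holds trivially. The substance of the argument is therefore confined to the case $\mu_f>0$.

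For $\mu_f>0$, I would fix the point $x$ at which the corollary is stated and invoke Proposition~\ref{prop:ProxBasic} with $\bar{x}:=x$, now letting the first argument range over an arbitrary $z\in\Re^n$. This yields, for every $z$,
\[
F(z)\geq F(P_\gamma(x))+G_{\gamma}(x)'(z-x)+\tfrac{\gamma}{2}\|G_{\gamma}(x)\|^2+\tfrac{\mu_f}{2}\|z-x\|^2.
\]
The right-hand side is a strongly convex quadratic in $z$ (with $x$ and $G_\gamma(x)$ frozen), minimized at $z=x-\mu_f^{-1}G_\gamma(x)$, where the two $z$-dependent terms jointly contribute $-\tfrac{1}{2\mu_f}\|G_\gamma(x)\|^2$.

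Since the displayed inequality holds pointwise in $z$, I would take the infimum over $z\in\Re^n$ on both sides and use $\inf_z F(z)=F_\star$ to obtain
\[
F_\star\geq F(P_\gamma(x))+\left(\tfrac{\gamma}{2}-\tfrac{1}{2\mu_f}\right)\|G_\gamma(x)\|^2.
\]
Rearranging, and then discarding the term $-\tfrac{\gamma}{2}\|G_\gamma(x)\|^2$, which only strengthens the bound because $\gamma>0$, gives $F(P_\gamma(x))-F_\star\leq\tfrac{1}{2\mu_f}\|G_\gamma(x)\|^2$; multiplying through by $2\mu_f>0$ then yields the claim.

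The only point requiring care is the passage to the infimum: one must observe that a pointwise inequality $F(z)\geq q(z)$ transmits to $\inf_z F(z)\geq\inf_z q(z)$, and that $\inf_z q(z)$ is finite precisely because $\mu_f>0$ renders $q$ coercive in $z$ — which is exactly why the case $\mu_f=0$ has to be handled separately. Everything else reduces to the elementary minimization of a quadratic, so I do not anticipate any genuine obstacle.
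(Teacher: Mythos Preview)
Your proof is correct and follows essentially the same route as the paper: apply Proposition~\ref{prop:ProxBasic} with $\bar{x}$ fixed at the point of interest, then minimize both sides over the free variable to turn the left-hand side into $F_\star$ and the right-hand side into $F(P_\gamma(x))-\tfrac{1}{2\mu_f}\|G_\gamma(x)\|^2$. The only cosmetic differences are that the paper discards the nonnegative term $\tfrac{\gamma}{2}\|G_\gamma(x)\|^2$ \emph{before} minimizing rather than after, and that you explicitly treat the degenerate case $\mu_f=0$, which the paper glosses over.
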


\begin{proof}
According to Proposition~\ref{prop:ProxBasic}, if  $\gamma\in (0,1/L_f]$ then for any $x,\bar{x}\in\Re^n$ we certainly have
\begin{equation}\label{eq:StronConvzbound}
F(x)\geq F(P_\gamma(\bar{x}))+G_{\gamma}(\bar{x})'(x-\bar{x})+\tfrac{\mu_f}{2}\|x-\bar{x}\|^2.
\end{equation}
Minimizing both sides with respect to $x$ we obtain $F_\star$ for the left hand side
and $x=\bar{x}-\mu_f^{-1}G_{\gamma}(\bar{x})$ for the right hand side.
Substituting in \eqref{eq:StronConvzbound} we obtain
\begin{align*}
F_\star&\geq F(P_{\gamma}(\bar{x}))-\tfrac{1}{2\mu_f}\|G_{\gamma}(\bar{x})\|^2.
\end{align*}
\iftoggle{svver}{\qed}{}
\end{proof}

The next proposition is useful for proving the global linear convergence rate of
Algorithm~\ref{al:PGNM}, in the case of $f$ strongly convex,
cf. Theorem~\ref{th:PGNMbnds2}.
\begin{proposition}\label{prop:LipLowBndDist}
For any $x\in\Re^n$, $x_\star\in X_\star$ and $\gamma\in (0,1/L_f]$
\[
F(P_{\gamma}(x))-F_\star\leq\tfrac{1}{2\gamma}(1-\gamma\mu_f)\|x-x_\star\|^2.
\]
\end{proposition}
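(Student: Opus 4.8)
The plan is to obtain the claim as an essentially one-line corollary of Proposition~\ref{prop:ProxBasic}, by choosing the two free points in that inequality so that the proximal step is taken at $x$ while the generic point is a solution $x_\star$. Concretely, I would set $\bar{x}=x$ and evaluate Proposition~\ref{prop:ProxBasic} at $x_\star$, which gives
\begin{equation*}
F_\star = F(x_\star)\geq F(P_\gamma(x))+G_{\gamma}(x)'(x_\star-x)+\tfrac{\gamma}{2}\|G_{\gamma}(x)\|^2+\tfrac{\mu_f}{2}\|x-x_\star\|^2.
\end{equation*}
Rearranging and rewriting $-G_{\gamma}(x)'(x_\star-x)$ as $G_{\gamma}(x)'(x-x_\star)$, this is equivalent to
\begin{equation*}
F(P_\gamma(x))-F_\star\leq G_{\gamma}(x)'(x-x_\star)-\tfrac{\gamma}{2}\|G_{\gamma}(x)\|^2-\tfrac{\mu_f}{2}\|x-x_\star\|^2.
\end{equation*}

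The only remaining step is to bound the first two terms on the right. Here I would apply the elementary inequality $v'w-\tfrac{\gamma}{2}\|v\|^2\leq\tfrac{1}{2\gamma}\|w\|^2$, which is nothing but $\tfrac{\gamma}{2}\|v-w/\gamma\|^2\geq 0$, with $v=G_{\gamma}(x)$ and $w=x-x_\star$. This absorbs the sign-indefinite cross term and yields
\begin{equation*}
F(P_\gamma(x))-F_\star\leq\tfrac{1}{2\gamma}\|x-x_\star\|^2-\tfrac{\mu_f}{2}\|x-x_\star\|^2=\tfrac{1}{2\gamma}(1-\gamma\mu_f)\|x-x_\star\|^2,
\end{equation*}
which is exactly the stated bound.

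I do not expect any genuine obstacle beyond selecting the substitution correctly. The one point worth noting is that the coefficient $\gamma/2$ on $\|G_{\gamma}(x)\|^2$ in Proposition~\ref{prop:ProxBasic} is precisely what is needed to dominate the cross term $G_{\gamma}(x)'(x-x_\star)$ through completion of squares and to produce the clean constant $1/(2\gamma)$; a smaller coefficient would not give this sharp value. The strong-convexity term $\tfrac{\mu_f}{2}\|x-x_\star\|^2$ then passes through untouched and supplies the factor $(1-\gamma\mu_f)$.
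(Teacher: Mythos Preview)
Your proof is correct, but it follows a different route from the paper. The paper works directly with the variational definition of $F_\gamma$: it bounds $F_\gamma(x)\leq f(x)+\nabla f(x)'(x_\star-x)+g(x_\star)+\tfrac{1}{2\gamma}\|x_\star-x\|^2$ by plugging $z=x_\star$ into~\eqref{eq:Fmin}, then applies strong convexity of $f$ to replace $f(x)+\nabla f(x)'(x_\star-x)$ by $f(x_\star)-\tfrac{\mu_f}{2}\|x-x_\star\|^2$, and finally invokes Theorem~\ref{Th:PropFg}(\ref{prop:LowBnd}) to pass from $F_\gamma(x)$ to $F(P_\gamma(x))$. Your argument instead leverages Proposition~\ref{prop:ProxBasic} as a black box and then completes the square in $G_\gamma(x)$ to absorb the cross term. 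The paper's route is slightly more elementary in that it avoids citing Proposition~\ref{prop:ProxBasic} (whose proof already uses Theorem~\ref{Th:PropFg}(\ref{prop:LowBnd})), while your route is attractive because it recycles an existing lemma and makes the role of the quadratic term $\tfrac{\gamma}{2}\|G_\gamma(x)\|^2$ explicit through the completion of squares; both arrive at the same constant with the same sharpness.
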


\begin{proof}
By definition of $F_\gamma$ we have
\begin{align*}
F_\gamma(x)&{=}\min_{z\in\Re^n}\left\{f(x){+}\nabla f(x)'(z-x)+g(z){+}\tfrac{1}{2\gamma}\|z-x\|^2\right\}\\
&\leq f(x){+}\nabla f(x)'(x_\star-x)+g(x_\star)+\tfrac{1}{2\gamma}\|x_\star-x\|^2\\
&\leq f(x_\star)+g(x_\star)-\tfrac{\mu_f}{2}\|x-x_\star\|^2+\tfrac{1}{2\gamma}\|x_\star-x\|^2,
\end{align*}
where the second inequality follows from (strong) convexity of $f$. The proof finishes by invoking Theorem~\ref{Th:PropFg}(\ref{prop:LowBnd}).
\iftoggle{svver}{\qed}{}
\end{proof}

Hereafter we provide the proofs omitted in Sections~\ref{sec:LNA} and~\ref{sec:FBNCG}.
\iftoggle{svver}{\paragraph{Proof of Proposition~\ref{prop:LNAprops1}\\\\}\label{par:proofLNA1}}
{\begin{proof}[Proof of Proposition~\ref{prop:LNAprops1}]\label{par:proofLNA1}}
Let $T(x)=x-\gamma\nabla f(x)$. Then $P_\gamma$ can be expressed as the composition
of mappings $\prox_{\gamma g}$ and $T$, \ie $P_\gamma(x)=\prox_{\gamma g}(T(x))$.
Since $\prox_{\gamma g}$ is (strongly) semismooth at $T(x_\star)$ we have that
$\partial_C\prox_{\gamma g}$ is a (strong) LNA for $\prox_{\gamma g}$ at $T(x_\star)$.
On the other hand, since $T$ is twice continuously differentiable, its Jacobian
$\nabla T(x)=I-\gamma\nabla^2 f(x)$ is a  LNA of $T$ at $x_\star$.
If in addition $\nabla^2 f$ is Lipschitz continuous around $x_\star$ then
$\nabla T$ is a strong LNA of $T$ at $x_\star$~\cite[Prop. 7.2.9]{facchinei2003finite}.
Invoking~\cite[Th. 7.5.17]{facchinei2003finite} we have that
$$\mathscr{P}_\gamma(x)=\{P(I-\gamma\nabla^2 f(x))\ |\ P\in\partial_C(\prox_{\gamma g})(x-\gamma\nabla f(x))\},$$ 
is a (strong) LNA of $P_\gamma$ at $x_\star$. 

Next consider $G_\gamma(x)=\gamma^{-1}(x-P_\gamma(x))$. Applying~\cite[Cor. 7.5.18(a)(b)]{facchinei2003finite}
we have 
$$\mathscr{G}_\gamma(x)=\{\gamma^{-1}(I-V)\ |\ V\in\mathscr{P}_\gamma(x)\},$$
is a (strong) LNA for $G_\gamma$ at $x_\star$.
Reinterpreting $\hat{\partial}^2F_\gamma(x)$ with the current notation,
\begin{align*}
\hat{\partial}^2F_\gamma(x)&=\{(I-\gamma\nabla^2f(x))Z\ |\ Z\in\mathscr{G}_\gamma(x)\}.
\end{align*}
Therefore, for any $H\in\hat{\partial}^2F_\gamma(x)$
\begin{align*}
\|\nabla F_\gamma(x)+H(x_\star-x)-\nabla F_\gamma(x_\star)\|&=\|(I-\gamma\nabla^2f(x))(G_\gamma(x)+Z(x-x_\star)-G_\gamma(x_\star))\|\\
&\leq\|G_\gamma(x)+Z(x-x_\star)-G_\gamma(x_\star)\|,
\end{align*}
where the equality follows by $0=\nabla F_\gamma(x_\star)=(I-\gamma\nabla f^2(x_\star))G_\gamma(x_\star)$,
and the inequality by $\gamma\in (0,1/L_f)$. Since $\mathscr{G}_\gamma$ is a (strong) LNA of $G_\gamma$, the last term is $o(\|x-x_\star\|)$
(and $O(\|x-x_\star\|^2)$ in the case where $\nabla^2 f$ is locally Lipschitz continuous).
This shows that $\hat{\partial}F_\gamma$ is a (strong) LNA of $\nabla F_\gamma$ at $x_\star$.
\iftoggle{svver}{\qed}{\end{proof}}

\iftoggle{svver}{\paragraph{Proof of Proposition~\ref{prop:PSDHess}\\\\}\label{par:proofLNA2}}
{\begin{proof}[Proof of Proposition~\ref{prop:PSDHess}]}
Any $H\in\hat{\partial}^2 F_\gamma(x)$ can be expressed as
$$H=\gamma^{-1}(I-\gamma\nabla^2 f(x))-\gamma^{-1}(I-\gamma\nabla^2 f(x))P(I-\gamma\nabla^2 f(x))$$
for some $P\in\partial_C(\prox_{\gamma g})(x-\gamma\nabla f(x))$. Obviously, recalling Theorem~\ref{th:JacProx}, $H$ is a symmetric matrix. We have
\begin{align*}
d'Hd & =\gamma^{-1}d'(I-\gamma\nabla^2 f(x))d-\gamma^{-1}d'(I-\gamma\nabla^2 f(x))P(I-\gamma\nabla^2 f(x))d\\
&\geq \gamma^{-1}d'(I-\gamma\nabla^2 f(x))d-\gamma^{-1}\|(I-\gamma\nabla^2 f(x))d\|^2\\
&= d'(I-\gamma\nabla^2 f(x))\nabla^2 f(x)d\\
&\geq  \min\{(1-\gamma\mu_f)\mu_f,(1-\gamma L_f)L_f\}\|d\|^2,
\end{align*}
where the first inequality follows by Theorem~\ref{th:JacProx} and the second by Lemma~\ref{lem:eigen}.
On the other hand
\begin{align*}
d'Hd & =\gamma^{-1}d'(I-\gamma\nabla^2 f(x))d-\gamma^{-1}d'(I-\gamma\nabla^2 f(x))P(I-\gamma\nabla^2 f(x))d\\
&\leq\gamma^{-1}d'(I-\gamma\nabla^2 f(x))d\\
&\leq \gamma^{-1}(1-\gamma\mu_f)\|d\|^2,
\end{align*}
where the first inequality follows by Theorem~\ref{th:JacProx}.
\iftoggle{svver}{\qed}{\end{proof}}

\iftoggle{svver}{\paragraph{Proof of Lemma~\ref{lem:sharpMin}\\\\}\label{par:proofLNA3}}
{\begin{proof}[Proof of Lemma~\ref{lem:sharpMin}]}
It suffices to prove that
$\|x-x_\star\|\leq c\|\nabla F_\gamma(x)\|$, for all
$x\ \mathrm{with}\ \|x-x_\star\|\leq\delta$ and some positive
$c$, $\delta$. The result will then follow, since
$\|\nabla F_\gamma(x)\|=\|(I-\gamma\nabla^2f(x))G_\gamma(x)\|\leq \|G_\gamma(x)\|$,
for $\gamma\in (0,1/L_f)$. For the sake of contradiction assume that there
exists a sequence of vectors $\{x^k\}$ converging to $x_\star$ such that
$x^k\neq x_\star$ for every $k$ and 
\begin{equation}\label{eq:contra}
\lim_{k\to\infty}\frac{\nabla F_\gamma(x^k)}{\|x^k-x_\star\|}=0.
\end{equation}
The assumptions of the lemma guarantee through Proposition~\ref{prop:LNAprops1} that $\hat{\partial}^2F_\gamma$ is a LNA of $\nabla F_\gamma$ at $x_\star$, therefore
$$0=\lim_{k\to\infty}\frac{\nabla F(x^k)+H^k(x_\star-x^k)-\nabla F_\gamma(x^\star)}{\|x^k-x_\star\|}=\lim_{k\to\infty}\frac{H^k(x_\star-x^k)}{\|x^k-x_\star\|},$$
where the second equality follows from~\eqref{eq:contra}. This implies that
$$\lim_{k\to\infty}\frac{(x_\star-x^k)'H^k(x_\star-x^k)}{\|x^k-x_\star\|^2}=0.$$
But since $\hat{\partial}^2F_\gamma$ is compact-valued and outer semicontinuous at $x_\star$, and $\{x^ k\}$ converges to $x_\star$, the nonsingularity assumption on the elements of $\hat{\partial}^2 F_\gamma(x_\star)$ implies through~\cite[Lem.~7.5.2]{facchinei2003finite} that for sufficiently large $k$, the smallest eigenvalue of $H^ k$ is minorized by a positive number. Therefore the above limit must be positive, reaching to a contradiction. Uniqueness follows from the fact that the set of zeros of $\nabla F_\gamma$ is equal to the set of optimal solutions of~\eqref{eq:GenProb}, through Theorem~\ref{Th:PropFg}\eqref{prop:DerPen}.
\iftoggle{svver}{\qed}{\end{proof}}
\iftoggle{svver}{\paragraph{Proof of Theorem~\ref{th:ComplPNM}\\\\}\label{proof:ComplPNM}}
{\begin{proof}[Proof of Theorem~\ref{th:ComplPNM}.]}
Since $\mu_f>0$ and $\zeta=0$, using Proposition~\ref{prop:PSDHess}, Eq.~\eqref{eq:CGprop} gives
\begin{equation}\label{eq:Dir1}
\nabla F_\gamma(x^ k)'d^ k\leq -c_1\|\nabla F_\gamma(x^ k)\|^2.
\end{equation}
where $c_1=\frac{\gamma}{(1-\gamma\mu_f)}$
while Eq.~\eqref{eq:boundd} gives
\begin{equation}\label{eq:Dir}
\|d^ k\|\leq c_2\|\nabla F_\gamma(x^ k)\|
\end{equation}
where $c_2=(\eta+1)/\xi_1$, $\xi_1\eqdef\min\left\{(1-\gamma\mu_f)\mu_f,(1-\gamma L_f)L_f\right\}$.
Using Eqs.~\eqref{eq:Armijo},~\eqref{eq:Dir1}, step $\tau_k=2^{-i_k}$ satisfies
$$F_\gamma(x^k+\tau_kd^k)-F_\gamma(x^k)\leq-\sigma\tau_kc_1\|\nabla F_\gamma(x^k)\|^2.$$
Due to Theorem~\ref{th:ProxPropQuad}, $\nabla F_\gamma$ is Lipschitz continuous, therefore using the descent Lemma~\cite[Prop. A.24]{bertsekas1999nonlinear}
\begin{align}
F_\gamma(x^k+2^{-i}d^k)-F_\gamma(x^k)&\leq 2^{-i}\nabla F_\gamma(x^k)'d^k+\tfrac{L_{F_\gamma}}{2}2^{-2i}\|d^k\|^2\nonumber\\
&\leq-2^{-i}c_1\|\nabla F_{\gamma}(x^k)\|^2+\tfrac{L_{F_{\gamma}}}{2}{c^2_2}2^{-2i}\|\nabla F_{\gamma}(x^k)\|^2 \nonumber\\
&\leq-2^{-i}c_1(1-\tfrac{L_{F_{\gamma}}}{2}\tfrac{c^2_2}{c_1}2^{-i})\|\nabla F_{\gamma}(x^k)\|^2\label{eq:LipDecrease}
\end{align}
where the second inequality follows by~\eqref{eq:Dir}.
 Let $i_{\min}$ be the first index $i$ for which $1-\tfrac{L_{F_\gamma}}{2}\tfrac{c^2_2}{c_1}2^{-i}\geq\sigma$, \ie
\begin{subequations}\label{eq:Arm}
\begin{align}
1-\tfrac{L_{F_\gamma}}{2}\tfrac{c^2_2}{c_1}2^{-i}&<\sigma,\quad 0\leq i< i_{\min}\label{eq:Arm1}\\
1-\tfrac{L_{F_\gamma}}{2}\tfrac{c^2_2}{c_1}2^{-i_{\min}} &\geq\sigma\label{eq:Arm2}
\end{align}
\end{subequations}
From~\eqref{eq:Armijo},~\eqref{eq:LipDecrease} and~\eqref{eq:Arm} we conclude that $i_k\leq i_{\min}$, therefore $\tau_k\geq\hat{\tau}_{\min}$, where $\hat{\tau}_{\min}=2^{-i_{\min}}$, thus we have
\begin{equation}\label{eq:MinDecrease}
F_{\gamma}(x^k+\tau_kd^k)-F_{\gamma}(x^k)\leq-\sigma\hat{\tau}_{\min}c_1\|\nabla F_{\gamma}(x^k)\|^2
\end{equation}
 From Eq.~\eqref{eq:Arm1} we obtain 
$$\sigma > 1-\tfrac{L_{F_\gamma}}{2}\tfrac{c^2_2}{c_1}2^{-(i_{\min}-1)}=1-\tfrac{c^2_2}{c_1}L_{F_\gamma}2^{-i_{\min}}=1-\tfrac{c^2_2}{c_1}L_{F_\gamma}\hat{\tau}_{\min}$$
Hence 
\begin{equation}\label{eq:minStep}
\hat{\tau}_{\min}>\frac{1-\sigma}{L_{F_\gamma}}\frac{c_1}{c^2_2}.
\end{equation}
Subtracting $F_\star$ from both sides of~\eqref{eq:MinDecrease} and using~\eqref{eq:minStep}
\begin{equation}\label{eq:OnS}
F_\gamma(x^k+\tau_kd^k)-F_\star\leq F_\gamma(x^k)-F_\star-\tfrac{\sigma(1-\sigma)}{L_{F_\gamma}}\tfrac{c^2_1}{c^2_2}\|\nabla F_\gamma(x^k)\|^2.
\end{equation}
Since $F_\gamma$ is strongly convex (cf. Theorem~\ref{th:ProxPropQuad}) we have~\cite[Th. 2.1.10]{nesterov2003introductory} 
\begin{equation}\label{eq:strConvLow}
F_\gamma(x^{k})-F_\star\leq\frac{1}{2\mu_{F_\gamma}}\|\nabla F_\gamma(x^k)\|^2.
\end{equation}
Combining~\eqref{eq:OnS} and~\eqref{eq:strConvLow} we obtain
$$F_\gamma(x^{k+1})-F_\star\leq r_{F_\gamma}(F_\gamma(x^k)-F_\star)$$
where $r_{F_\gamma}= 1-\tfrac{2\mu_{F_\gamma}\sigma(1-\sigma)}{L_{F_\gamma}}\tfrac{c^2_1}{c^2_2}$, therefore
$$F_\gamma(x^{k})-F_\star\leq  r_{F_\gamma}^k(F_\gamma(x^0)-F_\star).$$
Using $F(P_\gamma(x^k))\leq F_\gamma(x^k)$ (cf. Theorem~\ref{Th:PropFg}\eqref{prop:LowBnd}) we arrive at~\eqref{eq:QuadRateF}.
Using~\cite[Th. 2.1.8]{nesterov2003introductory}  
$$(\mu_{F_\gamma}/2)\|x-x_\star\|^2\leq F_\gamma(x)-F_\star\leq (L_{F_\gamma}/2)\|x-x_\star\|^2$$
we obtain~\eqref{eq:QuadRatex}.
\iftoggle{svver}{\qed}{\end{proof}}

\iftoggle{svver}{\paragraph{Proof of Theorem~\ref{th:PGNMbnds1}\\\\}\label{proof:PGNMbnds1}}
{\begin{proof}[Proof of Theorem~\ref{th:PGNMbnds1}]}
If  $k\notin\mathcal{K}$ and $s_k=0$, then $F(x^{ k+1})=F(P_\gamma(x^{ k}))\leq F_\gamma (x^{ k})$, where the inequality follows from~\eqref{eq:LowBnd4Gamma}.
If $k\in\mathcal{K}$ or $s_k=1$, then $F(x^{ k+1})=F(P_{\gamma}(\hat{x}^k))\leq F_\gamma(\hat{x}^k)\leq F_\gamma(x^ k)$, where the first inequality uses~\eqref{eq:LowBnd4Gamma} while the second   uses the fact that $d^ k$ is a direction of descent for $F_\gamma$. Therefore, we  have 
\begin{equation}\label{eq:Comp1}
F(x^{k+1})\leq F_\gamma (x^{k}),\quad k\in\Nn.
\end{equation}
Next, for any $x\in\Re^n$
\begin{equation}\label{eq:Comp2}
F_\gamma(x)\leq\min_{z\in\Re^n}\left\{f(z)+g(z)+\tfrac{1}{2\gamma}\|z-x\|^2\right\}=F^{\gamma}(x),
\end{equation} 
where the inequality uses the convexity of $f$ (recall that $F^\gamma$ is the Moreau envelope of $F=f+g$). Combining~\eqref{eq:Comp1} with~\eqref{eq:Comp2}, we obtain $F(x^{k+1})\leq F^\gamma(x^k)$.
The rest of the proof is similar to \cite[Th. 4]{nesterov2007gradient}. In particular we have
\begin{align*}
F(x^{k+1})&\leq F^\gamma(x^k)=\min_{x\in\Re^n}\left\{F(x)+\tfrac{1}{2\gamma}\|x-x^k\|^2\right\}\\
&\leq\min_{0\leq\alpha\leq 1}\left\{F(\alpha x_\star+(1-\alpha)x^k)+\tfrac{\alpha^2}{2\gamma}\|x^k-x_\star\|^2\right\}\\
&\leq\min_{0\leq\alpha\leq 1}\left\{F(x^k)-\alpha(F(x^k)-F_\star)+\tfrac{R^2}{2\gamma}\alpha^2\right\},
\end{align*}
where the last inequality follows by convexity of $F$.
If  $F(x^0)-F_\star\geq R^2/\gamma$, then the optimal solution of the latter problem for $k=0$ is $\alpha=1$ and we obtain~
\eqref{eq:FirstStep}. Otherwise, the optimal solution is $\alpha=\frac{\gamma(F(x^k)-F_\star)}{R^2}\leq \frac{\gamma(F(x^0)-F_\star)}{R^2}\leq 1$ and we obtain
$$F(x^{k+1})\leq F(x^k)-\frac{\gamma(F(x^k)-F_\star)^2}{2R^2}.$$
Letting $\lambda_k=\frac{1}{F(x^k)-F_\star}$ the latter inequality is expressed as
$$\frac{1}{\lambda_{k+1}}\leq\frac{1}{\lambda_k}-\frac{\gamma}{2R^2\lambda_k^2}.$$
Multiplying both sides by $\lambda_{k}\lambda_{k+1}$ and rearranging 
\begin{align*}
\lambda_{k+1}\geq\lambda_k+\frac{\gamma}{2R^2}\frac{\lambda_{k+1}}{\lambda_k}\geq \lambda_k+\frac{\gamma}{2R^2}
\end{align*}
where the latter inequality follows from the fact that $\{F(x^{k})\}_{k\in\Nn}$ is nonincreasing (cf.~\eqref{eq:DesPGNM}). Summing up for $0,\ldots,k-1$ we obtain
$$\lambda_k\geq\lambda_0+\frac{\gamma}{2R^2}k\geq\frac{\gamma}{2R^2}(k+2)$$
where the last inequality follows by $F(x^0)-F_\star\leq R^2/\gamma$. Rearranging, we arrive at~\eqref{eq:kStep}.
\iftoggle{svver}{\qed}{\end{proof}}

\iftoggle{svver}{\paragraph{Proof of Theorem~\ref{th:PGNMbnds2}\\\\}\label{proof:PGNMbnds2}}
{\begin{proof}[Proof of Theorem~\ref{th:PGNMbnds2}]}
If  $k\notin\mathcal{K}$ and $s_k=0$, then $x^{ k+1}=P_\gamma(x^ k)$ and the decrease condition \eqref{eq:DesPGNM} holds.
Subtracting $F_\star$ from both sides and using Corollary~\ref{prop:GradLowBnd} we obtain
\begin{equation}\label{eq:StronConDec}
F(x^ k)-F_{\star}\geq (1+\gamma\mu_f)(F(x^{ k+1})-F_\star).
\end{equation}
If $k\in\mathcal{K}$ or $s_k=1$, we have $F(x^{ k+1})=F(P_\gamma(\hat{x}^ k))\leq F_\gamma(\hat{x}^ k)-\tfrac{\gamma}{2}\|G_\gamma(\hat{x}^ k)\|^2\leq F_\gamma({x}^ k)-\tfrac{\gamma}{2}\|G_\gamma(\hat{x}^ k)\|^2\leq F(x^ k)-\tfrac{\gamma}{2}\|G_\gamma(\hat{x}^k)\|^2$, where the first inequality follows from Theorem~\ref{Th:PropFg}(\ref{prop:LowBnd}), the second from~\eqref{eq:Armijo} and the descent property of $d^ k$ and the third one from Theorem~\ref{Th:PropFg}(\ref{prop:UppBnd}).  
Subtacting $F_\star$ from both sides
$$F(x^{ k+1})-F_\star+\tfrac{\gamma}{2}\|G_\gamma(\hat{x}^k)\|^2\leq F(x^k)-F^\star.$$
Using Corollary~\ref{prop:GradLowBnd}, we obtain $\|G_\gamma(\hat{x}^ k)\|^2\geq2\mu_f(F(P_\gamma(\hat{x}^ k)-F_\star)=2\mu_f(F(x^{ k+1})-F_\star)$. Combining the last two inequalities we  again obtain \eqref{eq:StronConDec}, which proves \eqref{eq:strC1}. Now, from \eqref{eq:StronConDec} we obtain
\begin{align}
F(x^{ k+1})-F_{\star}&\leq(1+\gamma\mu_f)^{- k}(F(x^1)-F_\star)\nonumber\\
&= (1+\gamma\mu_f)^{- k}(F(P_{\gamma}(x^0))-F_\star)\nonumber\\
&\leq\frac{1-\gamma\mu_f}{2\gamma(1+\gamma\mu_f)^{ k}}\|x^0-x_\star\|^2\label{eq:distX1},
\end{align}
where the equality comes from the fact that $s_0=0$ and the second inequality follows from Proposition~\ref{prop:LipLowBndDist}.
Finally, putting $x=x^{ k+1}$, $\bar{x}=x_\star\in X_\star$ in   \eqref{eq:ProxBasic} and minimizing both sides we obtain
\begin{equation}\label{eq:LowStrConvex}
F(x^{ k+1})-F_\star\geq\tfrac{\mu_f}{2}\|x^{ k+1}-x_\star\|^2.
\end{equation}
Combining~\eqref{eq:distX1} and~\eqref{eq:LowStrConvex} we arrive at~\eqref{eq:strC2}.
\iftoggle{svver}{\qed}{\end{proof}}

\end{document}